\theoremstyle{plain}
\newtheorem{theo}{Theorem}
\newtheorem{lem}[theo]{Lemma}
\newtheorem{con}[theo]{Conjecture}
\newtheorem{cor}[theo]{Corollary}
\newcommand{\inv}{\operatorname{inv}}
\theoremstyle{definition}
\newtheorem{rem}{Remark}[section]
\newcommand{\mt}{\operatorname{MT}}
\newcommand{\e}{\operatorname{E}}
\newcommand{\fd}{\overline{\Delta}}
\newcommand{\bd}{\underline{\Delta}}
\newcommand{\id}{\operatorname{Id}}
\newcommand{\st}{\operatorname{Strict}}
\newcommand{\gt}{\operatorname{GT}}
\newcommand{\ar}{\operatorname{AR}}
\newcommand{\ct}{\operatorname{CT}}
\newcommand{\sgn}{\operatorname{sgn}}
\numberwithin{equation}{section}
\begin{document}

\title[Gog and Magog trapezoids]{Constant term formulas for refined enumerations of Gog and Magog trapezoids}

\author[Ilse Fischer]{Ilse Fischer}
\address{Ilse Fischer, Fakult\"{a}t f\"{u}r Mathematik, Universit\"{a}t Wien, Oskar-Morgenstern-Platz 1, 1090 Wien, Austria}
\email{ilse.fischer@univie.ac.at}

\thanks{The author acknowledges support from the Austrian Science Foundation FWF, START grant Y463 and SFB grant F50.}

\begin{abstract}
Gog and Magog trapezoids are certain arrays of positive integers that generalize alternating sign matrices (ASMs)
and totally symmetric self-complementary plane partitions (TSSCPPs)
respectively. Zeilberger used constant term formulas to prove that there is the same number of $(n,k)$-Gog trapezoids as there is of $(n,k)$-Magog trapezoids, thereby providing so far the only proof for a weak version of a conjecture by Mills, Robbins and Rumsey from 1986. About $20$ years ago, Krattenthaler generalized Gog and Magog trapezoids and formulated an extension of their conjecture, and, recently, Biane and Cheballah generalized Gog trapezoids further and formulated a related conjecture. In this paper, we derive constant term formulas for various refined enumerations of generalized Gog trapezoids including those considered by Krattenthaler and by Biane and Cheballah. For this purpose we employ a result on the enumeration of truncated monotone triangles which is in turn based in the author's operator formula for the number of monotone triangles with prescribed bottom row. As a byproduct, we also generalize the operator formula for monotone triangles by including the inversion number and the complementary inversion number for ASMs. Constant term formulas as well as determinant formulas for the refined Magog trapezoid numbers that appear in Krattenthaler's conjecture are also deduced by using the classical approach based on non-intersecting lattice paths and 
the Lindstr\"om-Gessel-Viennot theorem. Finally, we review and partly extend a few existing tools that may be helpful in relating constant term formulas for Gogs to those for Magogs to eventually prove the above mentioned conjectures. 
\end{abstract}

\maketitle

\section{Introduction}

When Robbins and Rumsey \cite{RR86} introduced \emph{alternating sign matrices} in the 1980s, this was exciting for several reasons. On the one hand, they formulated a conjecture together with Mills  \cite{MilRobRum82} which states that the number of $n \times n$ ASMs is given by the compelling simple product formula 
$\prod\limits_{i=0}^{n-1} \frac{(3i+1)!}{(n+i)!}$. This conjecture turned out to be difficult to prove \cite{ZeilbergerASMProof, KuperbergASMProof} and still deserves to be better understood. On the other hand, it was soon realized that the product formula has appeared before as the counting formula for two classes of plane partitions, namely for \emph{totally symmetric self-complementary plane partitions} in an $2n \times 2n \times 2n$ box \cite{MilRobRum86} and for
\emph{descending plane partitions} whose parts do not exceed $n$ \cite{DPPMRR}. This gave rise to the search for explicit bijections between the three classes of objects to prove these facts directly. However, up to this day, no one was able to provide these bijections, and to find them is for many people the most important open problem in this field. 

As part of their attempt to provide a bijection between ASMs and TSSCPPs, Mills, Robbins and Rumsey \cite[Conjecture~7]{MilRobRum86} introduced two new classes of objects, one of which generalizes ASMs, while the other generalizes TSSCPPs and they conjectured equinumeracy between the two classes.
In his proof of the ASM counting formula, Zeilberger \cite{ZeilbergerASMProof} proved their conjecture. Following Zeilberger, the two types of objects are since then called \emph{$(n,k)$-Gog trapezoids} and \emph{$(n,k)$-Magog trapezoids} $n$ and $k$ being arbitrary non-negative integers.  Bijections exist only for the cases $k=1,2$, see \cite{cheballah1, cheballah2, betti}. 
Already Mills, Robbins and Rumsey introduced a pair of statistics on Gog trapezoids as well as a pair of statistics on Magog trapezoids, and they conjectured that these pairs have the same joint distribution \cite[Conjecture~7']{MilRobRum86}.
In 1996, Krattenthaler \cite{KrattGogMagog,krattstanley} generalized Gog trapezoids and Magog trapezoids by introducing a third parameter $m$ and formulated an extension of the (strong version of the) conjecture by Mills, Robbins and Rumsey. Recently, Biane and Cheballah introduced \emph{Gog pentagons} and \emph{GOGAm} pentagons, for which they also conjecture equinumeracy, and they have provided a bijective proof of their conjecture in the special cases when $k=1,2$.
 
In this article, we deduce constant term formulas for refined enumerations of Gogs including those that appear in the conjectures of Krattenthaler and of Biane and Cheballah. The derivation is based on the main result of \cite{FischerRefEnumASM} which provides a formula for the number of \emph{truncated monotone triangles}. This result is a generalization of the operator formula for the number of monotone triangles with prescribed bottom row \cite{FischerNumberOfMT}. In fact, in order to include as many refinement parameters as possible, we extend the main results from \cite{FischerRefEnumASM} and from  \cite{FischerNumberOfMT} by introducing the inversion number as well as the complementary inversion number of ASMs, see Section \ref{prel}. In order to be able to compare Gogs to Magogs, we also derive constant term formulas for refined enumerations of Magog trapezoids, see Section~\ref{magog}. In the final section, we mention (and partly extend) some known tools that may be helpful to relate constant term formulas for Gogs to those for Magogs.  

In general, the purpose of our results is twofold: On the one hand, they provide a framework for computational proofs of the conjectures of Krattenthaler, and of Biane and Cheballah, but, on the other hand, they can also be useful in the (experimental) search for statistics on the two classes of objects that have the same distribution to eventually construct an explicit bijection between Gogs and Magogs. In particular, the presentation aims at illustrating a certain versatility of the result on truncated monotone triangles from \cite{FischerRefEnumASM} and possibly indicates how to include other statistics on Gogs when it turns out to be useful. 

Before we present the preliminaries in Section~\ref{prel}, which are then applied in Section~\ref{app} to derive the constant term formulas for Gogs, we recall the definition of Gog trapezoids and Magog trapezoids, and state Krattenthaler's extension of 
\cite[Conjecture~7']{MilRobRum86}.

\subsection*{Gog trapezoids, Magog trapezoids and Krattenthaler's conjecture}

\subsubsection*{Gog trapezoids} In \cite{KrattGogMagog,krattstanley}, Krattenthaler 
generalizes Zeilberger's Gog trapezoids \cite{ZeilbergerASMProof} (which appeared unnamed for the first time in \cite[Conjecture~7]{MilRobRum86}) as follows: For $m,n,k$ non-negative integers with $k \le n$, an  \emph{$(m,n,k)$-Gog trapezoid} is defined as an arrangement of positive integers of the following form (the bullets correspond to the integers) 
\begin{equation*}
\begin{array}{ccccccccccc}
& & & & & & \bullet & & & &  \\
& & & & & \bullet & & \bullet & & &  \\
& & & & \bullet & & \bullet & & \bullet & &  \\
& & & \bullet & & \bullet & & \bullet & & \bullet &  \\
& & \bullet & & \bullet & & \bullet & & \bullet & & \bullet \\
& \bullet & & \bullet & & \bullet & & \bullet & & \bullet & \\
\bullet & & \bullet & & \bullet & & \bullet & & \bullet & &
\end{array},
\end{equation*}
where $n$ is the number of rows ($n=7$ in the example), $k$ is 
the number of north-east diagonals ($k=5$ in the example) and the entries in the $i$-th south-east diagonal ($1 \le i \le n$, counted from the left)  are bounded from above by 
$m+i$ such that 
\begin{itemize}
\item the entries are weakly increasing along NE-diagonals and 
SE-diagonals, and 
\item strictly increasing along rows. 
\end{itemize}
(By the strict increase along rows, it suffices to require that the entries in the $i$-th SE-diagonal are bounded from above by $m+i$ for $k \le i \le n$.) A $(3,7,5)$-Gog trapezoid is given next.
$$
\begin{array}{ccccccccccc}
& & & & & & 4 & & & &  \\
& & & & & 3 & & 5 & & &  \\
& & & & 2 & & 5 & & 8 & &  \\
& & & 2 & & 4 & & 7 & & 9 &  \\
& & {\color{red} 1} & & 4 & & 6 & & 7 & & {\color{blue} 10} \\
& {\color{red} 1} & & 4 & & 5 & & 7 & & 8 & \\
{\color{red} 1} & & 3 & & 5 & & 6 & & {\color{blue} 8} & &
\end{array}
$$
Zeilberger's original $n \times k$-Gog trapezoids are just $(0,n,k)$-Gog trapezoids as defined by Krattenthaler.
According to \cite{MilRobRum86,KrattGogMagog,krattstanley}, we define minima and maxima of Gog trapezoids as follows:
\begin{itemize}
\item  A \emph{minimum} is an entry equal to $1$. (The minima are located at the 
 bottom of the leftmost NE-diagonal.)
\item  A \emph{maximum} is an entry in the $k$-th NE-diagonal that is equal to the upper bound for the entries in its SE-diagonal. 
\end{itemize}
In the example, we have $3$ minima (indicated in red) and $2$ maxima (indicated in blue).

\subsubsection*{Magog trapezoids} In \cite{KrattGogMagog,krattstanley}, Krattenthaler 
generalized also Zeilberger's Magog trapezoids \cite{ZeilbergerASMProof} (the latter also appeared essentially for the first time in \cite[Conjecture~7]{MilRobRum86}): For $m,n,k$ non-negative integers with $k \le n$, an  \emph{$(m,n,k)$-Magog trapezoid} is defined as an arrangement of positive integers of the following form 
\begin{equation*}
\begin{array}{ccccccccccccccc}
& & & & & & \bullet & & & & &&&& \\
& & & & & \bullet & & \bullet & & & &&&&  \\
& & & & \bullet & & \bullet & & \bullet & & &&&& \\
& & & \bullet & & \bullet & & \bullet & & \bullet & &&&& \\
& & \bullet & & \bullet & & \bullet & & \bullet & & \bullet &&&&  \\
&  & & \bullet & & \bullet & & \bullet & & \bullet & & \bullet & & \\
 & &  & & \bullet & & \bullet & & \bullet & & \bullet & & \bullet
\end{array},
\end{equation*}
where $n$ is the number of rows ($n=7$ in the example), $k$ is 
the number of SE-diagonals ($k=5$ in the example) and the entries in the $i$-th NE-diagonal ($1 \le i \le n$, counted from the left)  are bounded from above by $m+i$ such that the entries are weakly increasing along NE-diagonals and SE-diagonals. Next we display a $(2,7,5)$-Magog trapezoid.
\begin{equation}
\label{magogexample}
\begin{array}{ccccccccccccccc}
& & & & & & \color{blue} 3 & & & & &&&& \\
& & & & & 2 & & 3 & & & &&&&  \\
& & & & 2 & & 3 & & 4 & & &&&& \\
& & & 1 & & 3 & & 4  & & 5 & &&&& \\
& & \color{red} 1 & & 2 & & 3 & & 5 & & \color{blue} 7 &&&&  \\
&  & & \color{red} 1 & & 3 & & 4 & & 6 & & 7 & & \\
 & &  & & 2 & & 3 & & 4 & & 7 & & \color{blue} 9
\end{array}
\end{equation}
Zeilberger's $n \times k$-Magog trapezoids are $(0,n,k)$-Magog trapezoids as defined by Krattenthaler.
According to \cite{MilRobRum86,KrattGogMagog,krattstanley}, we define minima and maxima of Magog 
trapezoids as follows:
\begin{itemize}
\item A \emph{minimum} is an entry equal to $1$ that is located in the leftmost SE-diagonal. 
\item A \emph{maximum} is an entry in the rightmost SE-diagonal that is equal to the upper bound for the entries in its NE-diagonal. 
\end{itemize}
In the example, we have $2$ minima (indicated in red) and $3$ maxima (indicated in blue).

Krattenthaler \cite{KrattGogMagog,krattstanley} conjectures the following relation between Gog trapezoids and Magog trapezoids. 

\begin{con} 
\label{conj_christ}
The number of $(m,n,k)$-Gog trapezoids with $p$ minima and $q$ maxima is equal to the number of $(m,n,k)$-Magog trapezoids with $p$ maxima and $q$ minima.
\end{con}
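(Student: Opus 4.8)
The plan is to prove the conjecture at the level of bivariate generating functions. Write $G_{m,n,k}(x,y)$ for the sum over all $(m,n,k)$-Gog trapezoids of $x^{(\#\,\text{minima})}y^{(\#\,\text{maxima})}$, and let $M_{m,n,k}(x,y)$ be the corresponding generating function for $(m,n,k)$-Magog trapezoids. Because the statement interchanges the two statistics, it is equivalent to the single functional identity $G_{m,n,k}(x,y)=M_{m,n,k}(y,x)$. I would obtain a constant term ($\ct$) formula for each side by its own natural combinatorial model and then establish this identity by comparing the two formulas.

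For the Gog side I would encode an $(m,n,k)$-Gog trapezoid as a truncated monotone triangle: the weak increase along the two diagonal families and the strict increase along rows translate precisely into the interlacing conditions of monotone triangles, while deleting the rightmost $n-k$ NE-diagonals gives exactly the truncation studied in \cite{FischerRefEnumASM}. The two refinement statistics are boundary data on this triangle — the minima are the $1$'s on the leftmost edge and the maxima are the entries on the $k$-th edge that saturate their SE-diagonal bound $m+i$. To carry these through, I would apply the truncated monotone triangle enumeration of \cite{FischerRefEnumASM} together with the extended operator formula of Section~\ref{prel}, whose newly included inversion number $\inv$ and complementary inversion number are tailored to weight precisely such boundary features; specializing these weights to $x$ and $y$ should yield the desired constant term expression for $G_{m,n,k}(x,y)$.

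For the Magog side the weak increase along both diagonal directions lets one read each trapezoid as a family of non-intersecting monotone lattice paths with prescribed endpoints and upper bounds determined by $m$ and the diagonal index. Applying the Lindstr\"om--Gessel--Viennot theorem produces a determinant for the (weighted) count, and standard manipulations convert this determinant into a constant term formula, as indicated for the Magog numbers in Section~\ref{magog}. The minima (leftmost SE-diagonal) and maxima (rightmost SE-diagonal meeting their bound) are tracked by weighting the appropriate initial and terminal steps of the paths by $x$ and $y$, giving a second constant term expression, now for $M_{m,n,k}(x,y)$.

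The decisive — and genuinely open — step is the matching: proving $G_{m,n,k}(x,y)=M_{m,n,k}(y,x)$ from the two constant term formulas. Since the formulas originate from structurally unrelated models (monotone triangles versus lattice paths), their equality will not be visible term by term and cannot be expected from a direct comparison; it should instead follow from an identity between the underlying constant term kernels. I would attempt to bring both expressions into a common normal form using the transformation tools reviewed (and partly extended) in the final section, searching in particular for a variable substitution or a residue/contour argument that implements the interchange $x \leftrightarrow y$. This is the main obstacle, and it is the reason the conjecture remains unsettled; even short of a closed algebraic proof, however, the two constant term formulas reduce the conjecture to a concrete and checkable equality of rational generating functions, which is exactly the computational framework the present results are designed to supply.
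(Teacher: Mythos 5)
You have not proved the statement, and you could not have from the ingredients you cite: the statement is Conjecture~\ref{conj_christ}, which is open, and the paper itself does not prove it either. What you outline is essentially the paper's own program. Your Gog side is the truncated monotone triangle machinery (Corollary~\ref{constant1}) leading to the constant term formulas of Theorem~\ref{gog}; your Magog side is the Lindstr\"om--Gessel--Viennot encoding leading to Theorems~\ref{ver1} and~\ref{ver2}; and your hoped-for ``common normal form'' corresponds to the symmetrization tools of Section~\ref{connect} (Theorems~\ref{zeil}, \ref{general}, \ref{generalilse}). The step you correctly single out as decisive --- establishing $G_{m,n,k}(x,y)=M_{m,n,k}(y,x)$ from the two constant term expressions --- is precisely the step that nobody, including the paper, has carried out: the only settled instance is Zeilberger's $m=0$ case \emph{without} the statistics, and the symmetrization machinery has so far only closed the loop for the unrefined $(n,n,n)$ case, i.e.\ the equality of the constant terms of \eqref{gog-zinnjustin} and \eqref{magog-zinnjustin}. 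Since you flag this yourself, your text is an accurate description of a research framework, not a proof; the gap is the conjecture.

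Beyond that structural point, one ingredient in your plan is concretely wrong and one needs bookkeeping. First, the inversion numbers $\inv$ and $\inv'$ are \emph{not} the vehicle for the minima/maxima statistics: they count equalities $m_{i,j}=m_{i+1,j+1}$ and $m_{i,j}=m_{i+1,j}$ between consecutive rows, not boundary saturations, and the paper explicitly notes that they cannot be combined with the minima/maxima refinement, because tracking minima and maxima forces the weakly increasing ($u=v=1$) version of Corollary~\ref{constant1}. In the paper, $p$ minima are captured by passing to an $(\mathbf{s},\mathbf{t})$-tree with $\mathbf{s}=(p-1)$ and bottom entry $2$ in the first NE-diagonal (the factor $(-x_1)^{p-1}$ in \eqref{gog2}), and maxima by the factors $x_i/(1+x_i)$ versus $1/(1+x_i)$ summed into an elementary symmetric function, producing the $(1+Qx_i)$ factors of Theorem~\ref{gog}. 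Second, the constant term formulas on both sides fix the bottom row (Gog) or split into cases ($q\le n-k$ versus $q=n-k+1$ for Magogs, with weights that exclude a maximum at $b_k$ resp.\ a minimum at $b_{n-k+1}$), so the bivariate generating functions you want require an additional summation over bottom rows and careful handling of these boundary contributions --- fixable, but not automatic. The irreparable gap remains the matching identity itself.
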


The case $m=0$ is Conjecture~7' in \cite{MilRobRum86}.
Conjecture~\ref{conj_christ} implies in particular that the number of $(m,n,k)$-Gog trapezoids is equal to the number of $(m,n,k)$-Magog trapezoids. In \cite{ZeilbergerASMProof}, Zeiberger proved this for $m=0$.

\section{Preliminaries: monotone triangles and truncated monotone triangles}
\label{prel}

\subsection{Inversion numbers}
The \emph{inversion number} of an ASM $A=(a_{i,j})_{1 \le i, j \le n}$, as defined in \cite[Eq. (18)]{RR86}, is given by 
$$
\inv(A) = \sum_{1 \le i'<i \le n, 1 \le j' \le j \le n} a_{i',j} a_{i,j'}.
$$
It generalizes the inversion number of permutations, that is, for a permutation matrix $A$, $\inv(A)$ is the number of 
inversions of the permutation $\pi=(\pi_1,\ldots,\pi_n)$, where $\pi_i$ is the column of the unique $1$ in row $i$.
Define the \emph{complementary inversion number} $\inv'(A)$ as follows.
$$
\inv'(A) = \sum_{1 \le i'<i \le n, 1 \le j \le j' \le n} a_{i',j} a_{i,j'}
$$
If $A$ is the permutation matrix of $\pi=(\pi_1,\ldots,\pi_n)$, then 
$\inv'(A)$ is clearly the number of inversions of $(\pi_n,\pi_{n-1},\ldots,\pi_1)$, and so we have $\inv(A)+\inv'(A)=\binom{n}{2}$ for permutation matrices.
 
It is a well-known fact \cite{BressoudPandC} that $n \times n$ alternating sign matrices correspond to \emph{Gelfand-Tsetlin patterns} with strictly increasing rows and bottom row 
$1,\ldots,n$. Recall that a Gelfand-Tsetlin pattern is a triangular array 
$(m_{i,j})_{1 \le j \le i \le n}$ of integers, where the elements are usually arranged as follows 
\begin{equation}
\label{indexing}
\begin{array}{ccccccccccccccccc}
  &   &   &   &   &   &   &   & m_{1,1} &   &   &   &   &   &   &   & \\
  &   &   &   &   &   &   & m_{2,1} &   & m_{2,2} &   &   &   &   &   &   & \\
  &   &   &   &   &   & \dots &   & \dots &   & \dots &   &   &   &   &   & \\
  &   &   &   &   & m_{n-2,1} &   & \dots &   & \dots &   & m_{n-2,n-2} &   &   &   &   & \\
  &   &   &   & m_{n-1,1} &   & m_{n-1,2} &  &   \dots &   & \dots   &  & m_{n-1,n-1}  &   &   &   & \\
  &   &   & m_{n,1} &   & m_{n,2} &   & m_{n,3} &   & \dots &   & \dots &   & m_{n,n} &   &   &
\end{array}
\end{equation}
such that there is a weak increase in northeast and southeast direction, i.e., 
$m_{i+1,j} \le m_{i,j} \le m_{i+1,j+1}$ for all $i,j$ with $1 \le j \le i < n$. 
Throughout this paper, we use the indexing of the entries of triangular arrays and partial triangular arrays as given here. A Gelfand-Tsetlin pattern in which each row is strictly increasing except for possibly the bottom row is said to be a \emph{monotone triangle}. (This definition deviates from the standard definition where also the bottom row needs to be strictly increasing.) Monotone triangles of order $n$ where also the bottom row is increasing and the entries are positive integers no greater than $m+n$ are just $(m,n,n)$-Gog trapezoids.

Suppose $M=(m_{i,j})_{1 \le j \le i \le n}$ is the monotone triangle corresponding to the ASM $A=(a_{i,j})_{1 \le i,j \le n}$, then it is not hard to see that
$$
\inv(A)= \# \{ (i,j): m_{i,j}=m_{i+1,j+1} \} \quad \text{and} \quad
\inv'(A) = \# \{ (i,j): m_{i,j}=m_{i+1,j} \}.  
$$
Since the $-1$'s of an $n \times n$ ASM $A$ correspond to the entries $m_{i,j}$ of the associated monotone triangle with $i<n$ and $m_{i+1,j} < m_{i,j} < m_{i+1,j+1}$, it follows that 
\begin{equation}
\label{m1}
\inv(A) + \inv'(A) = \binom{n}{2} - \text{($\#$ of $-1$'s in $A$)}.
\end{equation}
We use this to extend the definition of the inversion number and the complementary inversion number to all monotone triangles, that is, 
$$
\inv(M) = \# \{ (i,j): m_{i,j}=m_{i+1,j+1} \} \quad \text{and} \quad 
\inv'(M) = \# \{ (i,j): m_{i,j}=m_{i+1,j}\}.
$$
In \cite[Corollary 3.1]{FischerASMProof2015}, it was shown that the number of monotone triangles with bottom row $b_1,\ldots,b_n$ is the constant term of
\begin{equation}
\label{constilse2}
\prod_{i=1}^{n} (1+x_i)^{b_i} x_i^{-n+1} 
\prod_{1 \le i < j \le n} (x_i-x_j)(1+x_j+x_i x_j).
\end{equation}
(This derivation was based on \cite{FischerNumberOfMT}.)
In this section, we aim at generalizing this in order to obtain a constant term expression for the generating function of monotone triangles with respect to 
the two inversion numbers. For an increasing sequence $\mathbf{b}=(b_1,\ldots,b_n)$, we define the generating function
$$
\mt_{\mathbf{b}}(u,v) = \sum_{M} u^{\inv(M)} v^{\inv'(M)},
$$
where the sum is over all monotone triangles with bottom row $\mathbf{b}$.

We say that the two non-decreasing sequences $\mathbf{b}=(b_1,\ldots,b_n)$ and 
$\mathbf{a}=(a_1,\ldots,a_{n-1})$ are \emph{interlacing} (in symbols: $\mathbf{a} \prec \mathbf{b}$), if 
$$
b_1 \le a_1 \le b_2 \le a_2 \le \ldots \le a_{n-1} \le b_n. 
$$  
Consecutive rows of monotone triangles are obviously interlacing sequences, 
and we can write down the following recursion for $\mt_{\mathbf{b}}(u,v)$:
$$
\mt_{\mathbf{b}}(u,v) = \sum_{\mathbf{a} \prec \mathbf{b} \atop \mathbf{a}\text{\,strictly increasing}} \mt_{\mathbf{a}}(u,v) u^{\# \{i \, : \, a_i=b_{i+1}\}} v^{\# \{ i \, : \, a_i=b_i\}}
$$ 

We fix some notation that is needed in the following: We use the \emph{shift operator} $\e_x$, the \emph{forward difference} $\fd_{x}$ and the \emph{backward difference} $\bd_{x}$, which are defined as follows.
\begin{align*}
\e_x p(x) &:= p(x+1) \\
\fd_{x} &:= \e_x - \id \\
\bd_{x} &:= \id - \e_x^{-1}
\end{align*} 
We also need the following operator.
$$
\st_{x,y} = \e_{x}^{-1} \e_y  + u \, \bd_{x} \e_{y} - v \, \e_{x}^{-1} \fd_{y} = 
u \e_y + v \e_{x}^{-1} + (1 - u - v) \e_{x}^{-1} \e_y 
$$
Moreover, we need to work with the following extended definition of the 
summation
$$
\sum_{i=a}^{b} f(i) = \begin{cases} f(a) + f(a+1) + \ldots + f(b), & a \le b, \\  0, & b=a-1, \\
  -f(b+1) - f(b+2) - \ldots - f(a-1), & b+1 \le a-1 \end{cases}.
$$
The crucial property of the operator is the following: (Here we use the \emph{Iversion bracket}, i.e., $[\text{statement}]=1$ if the statement is true and $[\text{statement}]=0$ otherwise.)
$$
\sum_{(a_{i-1},a_i) \prec (b_{i-1},b_i,b_{i+1}) \atop a_{i-1} < a_i} f(a_{i-1},a_i) 
u^{[a_{i-1}=b_i]} v^{[a_i=b_i]} =
\left. \left[ \st_{b_i^{(1)},b_i^{(2)}} \sum_{a_{i-1}=b_{i-1}}^{b_i^{(1)}} \sum_{a_i=b_i^{(2)}}^{b_{i+1}} 
f(a_{i-1},a_i) \right] \right|_{b_i^{(1)}=b_i^{(2)}=b_i},
$$
which is true provided that $b_{i-1} < b_i < b_{i+1}$.  In case $u=v=1$, it is also true if 
$b_{i-1} \le  b_i \le b_{i+1}$.
However, note that also 
\begin{equation}
\label{left}
\sum_{a_i=b_i}^{b_{i+1}} f(a_i) v^{[a_i=b_i]} = \left. \left[ \st_{b_i^{(1)},b_i^{(2)}} \sum_{a_i=b_{i}^{(2)}}^{b_{i+1}} f(a_i) \right] \right|_{b_i^{(1)}=b_i^{(2)}=b_i}
\end{equation}
and 
\begin{equation}
\label{right}
\sum_{a_i=b_i}^{b_{i+1}} f(a_i) u^{[a_i=b_{i+1}]} = \left. \left[ \st_{b_{i+1}^{(1)},b_{i+1}^{(2)}} \sum_{a_i=b_{i}}^{b_{i+1}^{(1)}} f(a_i) \right] \right|_{b_{i+1}^{(1)}=b_{i+1}^{(2)}=b_{i+1}}
\end{equation}
whenever $b_i \le b_{i+1}$. 
This implies 
\begin{multline*}
 \sum_{\mathbf{a} \prec \mathbf{b} \atop \mathbf{a} \text{\,strictly increasing} } f(\mathbf{a}) u^{\# \{i \, : \, a_i=b_{i+1}\}} v^{\# \{ i \, : \, a_i=b_i\}} \\
= \left. \left[ \st_{b_1^{(1)},b_1^{(2)}} \dots  \st_{b_n^{(1)},b_n^{(2)}} 
\sum_{a_1=b_1^{(2)}}^{b_2^{(1)}} \sum_{a_2=b_2^{(2)}}^{b_3^{(1)}} \cdots
\sum_{a_{n-1}=b_{n-1}^{(2)}}^{b_n^{(1)}} f(\mathbf{a}) \right] \right|_{b_i^{(1)}=b_i^{(2)}=b_i}
\end{multline*}
whenever $b_1 < b_2 < \ldots < b_n$, and, under the assumption that  $u=v=1$, it is sufficient to require $b_1 \le b_2 \le \ldots \le b_n$.
For $\mathbf{x}=(x_1,\ldots,x_n)$, we define 
$$
\gt_{n}(\mathbf{x}) = \prod_{1 \le i < j \le n} \frac{x_j-x_i+j-i}{j-i} \quad 
\text{and} \quad M_n(u,v,\mathbf{x}) = \prod_{1 \le p<q \le n} \st_{x_q,x_p} \gt_n(\mathbf{x}).
$$
Theorem~2.1 in \cite{FischerASMProof2015} can be generalized as follows.
\begin{theo}
\label{operator}
Suppose $\mathbf{b}=(b_1,\ldots,b_n)$ is a strictly  increasing sequence of integers, then the generating function of monotone triangles with bottom row $b_1,\ldots,b_n$ w.r.t.\ the two inversion numbers is the evaluation of the polynomial $M_n(u,v,\mathbf{x})$ at $(x_1,\ldots,x_n)=(b_1,\ldots,b_n)$. When considering the special case $u=v=1$, it suffices to require that $(b_1,\ldots,b_n)$ is weakly increasing.
\end{theo}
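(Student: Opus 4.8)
The plan is to prove the statement by induction on $n$, combining the recursion for $\mt_{\mathbf{b}}(u,v)$ recorded above with the summation formula derived just before the theorem. For the base case $n=1$ there is a unique monotone triangle with bottom row $(b_1)$, namely the single entry itself; it has $\inv=\inv'=0$, so $\mt_{(b_1)}(u,v)=1$, while $M_1(u,v,x)=\gt_1(x)=1$ is the empty product, and both sides agree.

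For the inductive step I fix a strictly increasing $\mathbf{b}=(b_1,\dots,b_n)$ and assume the theorem for length $n-1$. In the recursion for $\mt_{\mathbf{b}}(u,v)$ the summand $\mt_{\mathbf{a}}(u,v)$ is only evaluated at strictly increasing $\mathbf{a}$ (rows of a monotone triangle above the bottom are strictly increasing), so by the induction hypothesis I may replace it by the polynomial $M_{n-1}(u,v,\mathbf{a})$ \emph{before} applying the summation formula. Applying that formula, which is licensed precisely because $b_1<\dots<b_n$, then expresses $\mt_{\mathbf{b}}(u,v)$ as a boundary-operator expression, and the theorem is reduced to the operator identity $(\star)$:
\[
M_n(u,v,\mathbf{b}) = \left. \left[ \st_{b_1^{(1)},b_1^{(2)}} \cdots \st_{b_n^{(1)},b_n^{(2)}} \sum_{a_1=b_1^{(2)}}^{b_2^{(1)}} \cdots \sum_{a_{n-1}=b_{n-1}^{(2)}}^{b_n^{(1)}} M_{n-1}(u,v,\mathbf{a}) \right] \right|_{b_i^{(1)}=b_i^{(2)}=b_i}.
\]
Since every quantity occurring here is a polynomial in $\mathbf{b}$, and a polynomial vanishing on all strictly increasing integer tuples vanishes identically, $(\star)$ only has to be checked as a polynomial identity.

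The heart of the argument is therefore $(\star)$, which generalizes the identity underlying Theorem~2.1 of \cite{FischerASMProof2015} from $u=v=1$ to arbitrary $u,v$. To prove it I would expand $M_{n-1}(u,v,\mathbf{a})=\prod_{1\le p<q\le n-1}\st_{a_q,a_p}\gt_{n-1}(\mathbf{a})$ and exploit that all the $\st$-operators are built from commuting shift operators, hence commute with one another. The inner operators $\st_{a_q,a_p}$ are then passed across the doubled summations by telescoping — the same mechanism behind identities \eqref{left} and \eqref{right} — which converts them into contributions at the summation boundaries, while the remaining plain summation collapses $\gt_{n-1}$ into $\gt_n$ through the Gelfand--Tsetlin recursion
\[
\gt_n(\mathbf{b}) = \sum_{a_1=b_1}^{b_2} \cdots \sum_{a_{n-1}=b_{n-1}}^{b_n} \gt_{n-1}(\mathbf{a}),
\]
valid first for weakly increasing integers and then as a polynomial identity. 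After the substitution $b_i^{(1)}=b_i^{(2)}=b_i$ the boundary operators together with the telescoped contributions should reassemble into the missing factors $\st_{b_n,b_p}$ involving the new variable and correct the indices of the surviving $\st_{b_q,b_p}$, producing $\prod_{1\le p<q\le n}\st_{b_q,b_p}\gt_n(\mathbf{b})=M_n(u,v,\mathbf{b})$. The computation uses only mutual commutativity of the $\st$-operators, the telescoping identities \eqref{left} and \eqref{right}, the $\gt$-recursion, and the normalization $u+v+(1-u-v)=1$ (equivalently, that $\st$ fixes constants); all of these are available for general $u,v$, so the two weights are carried along unchanged.

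The main obstacle is exactly this bookkeeping in $(\star)$: tracking which telescoped boundary term feeds which boundary operator, and handling the degenerate cases in which summation bounds coincide, where the extended summation convention and the distinction between strictly and weakly increasing bottom rows come into play. This last point also delivers the final assertion: at $u=v=1$ the summation formula remains valid for weakly increasing $\mathbf{b}$, so the reduction above goes through verbatim for such $\mathbf{b}$, and since $(\star)$ has already been established as a polynomial identity, this case requires no additional work.
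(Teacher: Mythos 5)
Your proposal is correct and follows essentially the same route as the paper, whose proof of this theorem consists solely of the remark that it is analogous to the proof of Theorem~2.1 in \cite{FischerASMProof2015}: namely, induction on $n$ via the recursion for $\mt_{\mathbf{b}}(u,v)$ and the doubled-variable summation formula (valid for strictly increasing $\mathbf{b}$, and for weakly increasing $\mathbf{b}$ when $u=v=1$), reducing everything to the operator identity you call $(\star)$, which is then verified by passing the commuting shift operators through the extended sums and invoking the Gelfand--Tsetlin recursion $\gt_n(\mathbf{b})=\sum_{a_1=b_1}^{b_2}\cdots\sum_{a_{n-1}=b_{n-1}}^{b_n}\gt_{n-1}(\mathbf{a})$. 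Note only that your treatment of $(\star)$ remains a sketch (``should reassemble''), but this matches the level of detail the paper itself provides, since it likewise leaves the bookkeeping to the cited reference.
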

The proof is analogous to the proof of Theorem~2.1 in  \cite{FischerASMProof2015}. Moreover, using the fact that 
$$
\st_{x,y} = \e_{x}^{-1} (\id + u \, \fd_{x} + (1-v) \, \fd_{y} + u \, \fd_{x} \fd_{y}),
$$
the following generalization of Corollary~3.1 in \cite{FischerASMProof2015} can be proved.
\begin{cor}
\label{constant}
Suppose $\mathbf{b}=(b_1,\ldots,b_n)$ is a strictly increasing sequence of integers, then the generating function of monotone triangles with bottom row $b_1,\ldots,b_n$ w.r.t.\ the two inversion numbers is the constant term in $x_1,\ldots,x_n$ of the following Laurent polynomial.
$$
\prod_{i=1}^{n} (1+x_i)^{b_i} x_i^{-n+1} \prod_{1 \le i < j \le n} (x_i-x_j)(1+(1-v)x_i+
u (x_j + x_i x_j))
$$
When considering the special case $u=v=1$, it suffices to require that $(b_1,\ldots,b_n)$ is weakly increasing.
\end{cor}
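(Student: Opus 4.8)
The plan is to deduce the corollary from Theorem~\ref{operator} by encoding the evaluation of the operator polynomial as a constant term, exactly as \eqref{constilse2} was obtained in the case $u=v=1$. By Theorem~\ref{operator}, the sought generating function equals $\bigl[\prod_{1\le p<q\le n}\st_{x_q,x_p}\,\gt_n(\mathbf{x})\bigr]_{\mathbf{x}=\mathbf{b}}$. To avoid a clash of notation I would write the arguments of $\gt_n$ as $\mathbf{c}=(c_1,\dots,c_n)$ (so that $\st_{x_q,x_p}$ of Theorem~\ref{operator} is the operator $\st$ in the variables $c_q,c_p$) and reserve $x_1,\dots,x_n$ for the constant-term variables of the assertion. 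The device driving the whole argument is the constant-term representation
\begin{equation}
\gt_n(\mathbf{c}) = \ct_{\mathbf{x}}\Big[\prod_{i=1}^n (1+x_i)^{c_i}\,x_i^{-n+1}\prod_{1\le i<j\le n}(x_i-x_j)(1+x_j)\Big],
\end{equation}
where $(1+x_i)^{c_i}$ is expanded as a formal power series in $x_i$ and $\ct_{\mathbf{x}}$ extracts the coefficient of $x_1^{0}\cdots x_n^{0}$. I would verify this identity by expanding $(1+x_i)^{c_i}=\sum_k\binom{c_i}{k}x_i^k$ and recognizing the resulting sum as the Weyl-dimension expression $\prod_{i<j}\frac{c_j-c_i+j-i}{j-i}$ defining $\gt_n$, using that the $\binom{c_i}{k}$ form a basis of the polynomials in $c_i$.

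The key translation principle is that, since $\prod_i(1+x_i)^{c_i}$ is the only $\mathbf{c}$-dependent factor and $\e_{c_i}(1+x_i)^{c_i}=(1+x_i)(1+x_i)^{c_i}$, the shift operator $\e_{c_i}$ passes through the constant term as multiplication by $(1+x_i)$; hence $\fd_{c_i}\mapsto x_i$ and $\e_{c_i}^{-1}\mapsto (1+x_i)^{-1}$. Because constant-term extraction is linear and commutes with the $\mathbf{c}$-operators (which do not involve $\mathbf{x}$), applying $\prod_{p<q}\st_{x_q,x_p}$ to $\gt_n$ and then setting $\mathbf{c}=\mathbf{b}$ yields
\begin{equation}
\ct_{\mathbf{x}}\Big[\prod_{i=1}^n(1+x_i)^{b_i}\,x_i^{-n+1}\prod_{i<j}(x_i-x_j)(1+x_j)\cdot\prod_{p<q}\Phi_{p,q}(\mathbf{x})\Big],
\end{equation}
where $\Phi_{p,q}$ is the image of $\st_{x_q,x_p}$ under this translation.

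To compute $\Phi_{p,q}$ I would use the decomposition $\st_{x,y}=\e_x^{-1}(\id+u\,\fd_x+(1-v)\,\fd_y+u\,\fd_x\fd_y)$ quoted before the statement, with $x=c_q$ and $y=c_p$. The substitutions $\fd_{c_q}\mapsto x_q$, $\fd_{c_p}\mapsto x_p$, $\e_{c_q}^{-1}\mapsto (1+x_q)^{-1}$ give
\begin{equation}
\Phi_{p,q}(\mathbf{x}) = (1+x_q)^{-1}\bigl(1+(1-v)x_p+u(x_q+x_px_q)\bigr).
\end{equation}
Collecting the prefactors over all pairs yields $\prod_{p<q}(1+x_q)^{-1}=\prod_{i<j}(1+x_j)^{-1}$, which cancels exactly the factor $\prod_{i<j}(1+x_j)$ coming from the representation of $\gt_n$. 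After this cancellation the integrand is again a genuine Laurent polynomial, and relabelling the pair indices $p<q$ as $i<j$ produces precisely $\prod_i(1+x_i)^{b_i}x_i^{-n+1}\prod_{i<j}(x_i-x_j)(1+(1-v)x_i+u(x_j+x_ix_j))$, as claimed. Since all of these are algebraic identities independent of the nature of $\mathbf b$, the case $u=v=1$ under the weaker hypothesis that $\mathbf b$ is only weakly increasing is inherited directly from the corresponding clause in Theorem~\ref{operator}.

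The step I expect to be the main obstacle is the rigorous justification of the translation principle: one must fix a ring of formal Laurent series in $\mathbf{x}$ in which the intermediate factors $(1+x_q)^{-1}$ (expanded as $1-x_q+x_q^2-\cdots$) are legitimate and in which constant-term extraction commutes with the shift operators acting on the separate argument variables $\mathbf c$. The cancellation of $\prod_{i<j}(1+x_j)$ against $\prod_{p<q}(1+x_q)^{-1}$ is what ultimately returns us to a Laurent polynomial, so this bookkeeping, together with the proof of the constant-term representation of $\gt_n$, is where the real work lies; the remaining manipulations are the same bilinear algebra already used in deriving \eqref{constilse2}.
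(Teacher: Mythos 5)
Your proposal is correct and follows essentially the same route as the paper: the paper's (sketched) proof likewise starts from Theorem~\ref{operator}, uses the decomposition $\st_{x,y}=\e_x^{-1}(\id+u\,\fd_x+(1-v)\,\fd_y+u\,\fd_x\fd_y)$, and translates the shift operators into multiplication by powers of $(1+x_i)$ inside the constant-term representation of $\gt_n$, exactly as in the derivation of \eqref{constilse2} in \cite{FischerASMProof2015}. Your intermediate representation of $\gt_n$ is correct (it is the standard one, obtained from $\gt_n(\mathbf{c})=\det_{1\le i,j\le n}\binom{c_i+i-1}{j-1}$ and the Vandermonde evaluation of $\det(x_i^{-j+1})$, since $\prod_{i<j}(1+x_j)=\prod_i(1+x_i)^{i-1}$), and the cancellation of $\prod_{p<q}(1+x_q)^{-1}$ against it yields precisely the stated Laurent polynomial.
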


In Appendix~\ref{v1minusu}, we elaborate on the case $v=1-u$, which is an interesting case since there exist certain combinatorial tools to handle it. Also note that we can recover the generating function with respect to number of $-1$ in the ASM proved in \cite{FischerSimplifiedProof} by setting $u=v=1/Q$ and multiplying with $Q^{\binom{n}{2}}$. This follows from \eqref{m1}. 

\begin{rem} There exist two alternative constant term expressions for the number
of monotone triangles with bottom row $\mathbf{b}=(b_1,\ldots,b_n)$ that could replace \eqref{constilse2} in all what follows in the special case $u=v=1$. Both of them are also based on 
\cite{FischerNumberOfMT}.
First, it was shown in 
\cite[Theorem~3]{FischerOpFormulaVSASM} that the constant term of
\begin{equation}
\label{alternative}
\prod_{i=1}^{n} x_i^{-n+1-b_i} (1-x_i)^{-n} \prod_{1 \le i < j \le n} (x_j-x_i)(1-x_j+x_i x_j)
\end{equation}
is the number of monotone triangles with bottom row $\mathbf{b}=(b_1,\ldots,b_n)$ if $b_i \ge 0$ for all $i$.
Note that this is not a Laurent polynomial but a rational function, and so we need to clarify how we expand it into a Laurent series. Here and throughout the remainder of the paper, unless stated otherwise, we expand in powers with non-negative exponents, in 
particular, we have 
$$
(1-x_i)^{-n} = \sum_{j=0}^{\infty} \binom{-n}{j} (-1)^j x_i^j
$$
in \eqref{alternative}. In order to present the second alternative formula, we 
define ${\mathcal AS}_{x_1,\ldots,x_n}$ to be the antisymmetrizer w.r.t.\ $x_1,\ldots,x_n$, that is 
\begin{equation}
\label{antisym}
{\mathcal AS}_{x_1,\ldots,x_n} p(x_1,\ldots,x_n):= \sum_{ \sigma \in {\mathcal S}_n} \sgn \sigma \, p(x_{\sigma(1)},\ldots,x_{\sigma(n)}).
\end{equation}
In \cite{FischerRieglerVSASM} it was shown that 
the number of monotone triangles with bottom row $\mathbf{b}=(b_1,\ldots,b_n)$ is given by the constant term of the following expression.
\begin{equation}
\label{alternative2}
{\mathcal AS}_{x_1,\ldots,x_n} \left[ \prod_{i=1}^{n} (1+x_i)^{b_i} \prod_{1 \le i < j \le n} (1+ x_j + x_i x_j) \right] \prod_{1 \le i < j \le n} (x_j - x_i)^{-1}
\end{equation}
It can be seen directly that this constant term is equal to the constant term of \eqref{constilse2} as, for any polynomial $f(x_1,\ldots,x_n)$, the constant term of $f(x_1,\ldots,x_n) \prod\limits_{1 \le i < j \le n} (x_i^{-1} - x_j^{-1})$ is equal to the constant term of ${\mathcal AS}_{x_1,\ldots,x_n} \left[ f(x_1,\ldots,x_n) \right] / \prod\limits_{1 \le i < j \le n} (x_i-x_j)$.
Note that the expression in \eqref{alternative2} is a polynomial in $x_1,\ldots,x_n$ (and not only a Laurent polynomial). 

It should be noted that \eqref{alternative} and \eqref{alternative2} could actually be generalized to arbitrary $u$ and $v$.
\end{rem}

\subsection{Refined enumeration with respect to the top entry}

For a positive integer $n$, non-negative integers $\min \le a \le \max$
and $\mathbf{x}=(x_1,\ldots,x_n)$, we define
$$
\gt_{n,a,\min,\max}(\mathbf{x}) = \sum_{b=0}^{\max-\min} (-1)^{a+\min+b} \binom{b}{a-\min} \det_{1 \le i, j \le n} \left( \binom{x_i+i-\min-1}{j-1 + b [j=n]} \right).
$$
Suppose $\min \le b_1 \le b_2 \le \ldots \le b_n \le \max$ is a sequence of 
integers, then $\gt_{n,a,\min,\max}(b_1,\ldots,b_n)$ is the number of Gelfand-Tsetlin patterns with bottom row $b_1,\ldots,b_n$ and top row $a$. This follows, for instance, from a result in 
\cite[Theorem 3, $S=\emptyset,P=Q=1$]{FischerSimplifiedProof}, as   
$$
\prod_{\min \le i \le \max, i \not=a} \frac{x-i}{a-i} =
\sum_{b=0}^{\max-\min} (-1)^{a+\min+b} 
\binom{b}{a-\min} \binom{x-\min}{b}.
$$ 
Further, we define 
$$
M_{n,a,\min,\max}(u,v,\mathbf{x}) = \prod_{1 \le p<q \le n} \st_{x_q,x_p} \gt_{n,a,\min,\max}(\mathbf{x}),
$$
then it follows from \cite[Theorem~3]{FischerSimplifiedProof} and the ideas above that the evaluation of $M_{n,a,\min,\max}(u,v,\mathbf{x})$ at $\mathbf{x}=(b_1,\ldots,b_n)$ is the generating function of monotone triangles with bottom row $b_1,\ldots,b_n$ and top row $a$, provided that $\min \le b_1 < b_2 < \ldots < b_n \le \max$. The refined version of Corollary~\ref{constant} is then the following.

\begin{theo}
\label{top}
Suppose $\mathbf{b}=(b_1,\ldots,b_n)$ is a strictly increasing sequence of integers and $\min \le a \le \max$ are integers with $\min \le b_1$ and $b_n  \le \max$. Then the generating function of monotone triangles with bottom row $b_1,\ldots,b_n$ and top row $a$ is the constant term of the rational function one obtains by multiplifying the Laurent polynomial in Corollary~\ref{constant} with 
$$
\sum_{b=0}^{\max - \min} (-1)^{a + \min + b} \binom{b}{a-\min} \prod_{i=1}^{n} (1+x_i)^{-\min} \langle z^b \rangle \prod_{i=1}^{n} \frac{1}{1-x_i^{-1} z},
$$
where $\langle z^b \rangle$ means that we take the coefficient of $z^b$ and 
$\frac{1}{1-x_i^{-1} z} = \sum\limits_{k=0}^{\infty} z^k x_i^{-k}$.
Again, when considering the special case $u=v=1$, it suffices to require that 
$\mathbf{b}=(b_1,\ldots,b_n)$ is weakly increasing.
\end{theo}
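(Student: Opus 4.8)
The plan is to start from the already-established fact that the evaluation of $M_{n,a,\min,\max}(u,v,\mathbf{x})$ at $\mathbf{x}=(b_1,\ldots,b_n)$ is the desired generating function, and then convert the operator expression into a constant term. Following the derivation of Corollary~\ref{constant} from Theorem~\ref{operator}, the key is that applying the product of shift operators $\prod_{1 \le p<q \le n}\st_{x_q,x_p}$ and then evaluating at integer points can be encoded as a constant term extraction. Concretely, the correspondence used earlier rests on the identity that for the operators $\e_x,\fd_x,\bd_x$, the action of $\st_{x_q,x_p}$ on a function, followed by evaluation, matches multiplication by the factor $(x_i-x_j)(1+(1-v)x_i+u(x_j+x_ix_j))$ inside the constant-term integrand (up to the $\prod x_i^{-n+1}$ and $(1+x_i)^{b_i}$ bookkeeping). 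So the task reduces to replacing the base function $\gt_n(\mathbf{x})$ in Corollary~\ref{constant} by $\gt_{n,a,\min,\max}(\mathbf{x})$ and tracking how that substitution modifies the integrand.

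First I would recall that $\gt_n(\mathbf{x})$ counts Gelfand--Tsetlin patterns with bottom row $\mathbf{x}$, and its constant-term avatar is $\prod_i(1+x_i)^{b_i}$ times the Vandermonde-type product. The refined object $\gt_{n,a,\min,\max}(\mathbf{x})$ adds the top-entry constraint via the stated linear combination $\sum_{b=0}^{\max-\min}(-1)^{a+\min+b}\binom{b}{a-\min}\binom{x-\min}{b}$, which is exactly the Lagrange-type interpolation polynomial that is $1$ at $x=a$ and $0$ at the other integers in $[\min,\max]$. The next step is to translate this extra polynomial factor, which acts on each $x_i$ through the shift in the determinant formula, into a generating-function operation on the constant-term integrand. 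The clean way to do this is to note that applying $\binom{x_i-\min}{b}$ corresponds, under the constant-term/residue dictionary, to extracting $\langle z^b\rangle$ from $\prod_i\frac{1}{1-x_i^{-1}z}$ after the shift by $\min$ has been absorbed into the factor $\prod_i(1+x_i)^{-\min}$; this is precisely the quantity in the theorem statement.

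Thus the central calculation is to verify the single-variable identity relating the binomial-coefficient weight $\binom{x_i-\min}{b}$ to the coefficient extraction $\langle z^b\rangle \prod_i \frac{1}{1-x_i^{-1}z}$ combined with the correction $(1+x_i)^{-\min}$, and then to check that summing against $(-1)^{a+\min+b}\binom{b}{a-\min}$ reproduces the top-row indicator. After that identity is in place, one multiplies the Laurent polynomial of Corollary~\ref{constant} by this extra factor and takes the constant term; linearity in the sum over $b$ and the fact that the operators $\st_{x_q,x_p}$ commute with the multiplicative weights (they act through shifts that are already encoded in the $(1+x_i)^{b_i}$ factors) ensure that the refined generating function is recovered. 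The $u=v=1$ relaxation to weakly increasing $\mathbf{b}$ follows verbatim as in Theorem~\ref{operator} and Corollary~\ref{constant}, since in that case the operator identities remain valid under the weaker hypothesis.

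I expect the main obstacle to be the careful matching of shifts: the determinant in $\gt_{n,a,\min,\max}$ involves $\binom{x_i+i-\min-1}{\,\cdot\,}$, so the index shift by $i$ and by $\min$ must be reconciled with the $x_i^{-n+1}$ normalization and with the $(1+x_i)^{-\min}$ factor appearing in the statement. Getting the signs and the exact exponent of $(1+x_i)$ correct — so that the coefficient extraction $\langle z^b\rangle$ lands on the right power — is where the bookkeeping is most delicate, and it is the step I would write out in full rather than leave to the reader.
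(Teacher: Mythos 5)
Your route is the paper's own: Theorem~\ref{top} is meant to follow by rerunning the translation that produced Corollary~\ref{constant}, with the base function $\gt_n$ replaced by $\gt_{n,a,\min,\max}$, and your framing of the starting point (the evaluation of $M_{n,a,\min,\max}(u,v,\mathbf{x})$ at $\mathbf{b}$ is the refined generating function), of the role of the interpolation polynomial $\sum_{b}(-1)^{a+\min+b}\binom{b}{a-\min}\binom{x-\min}{b}$, and of the $u=v=1$ relaxation all match. The gap is in what you call the central calculation. The weight $\binom{x-\min}{b}$ is a polynomial in the \emph{top entry} of the pattern, not in the bottom-row variables; inside $\gt_{n,a,\min,\max}$ its only trace is the extra exponent $b$ in the \emph{last column} of the determinant $\det_{1\le i,j\le n}\binom{x_i+i-\min-1}{j-1+b[j=n]}$. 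Consequently there is no ``single-variable identity'' of the kind you propose to verify: applying a factor $\binom{x_i-\min}{b}$ for each $i$ would produce a product of $n$ one-variable polynomials of total degree $nb$, whereas $\langle z^b\rangle \prod_{i=1}^{n} \frac{1}{1-x_i^{-1}z} = h_b(x_1^{-1},\ldots,x_n^{-1})$ is a single symmetric function of degree $-b$ --- already for $b=1$ it equals $x_1^{-1}+\cdots+x_n^{-1}$, which is not of that shape. So the step you explicitly planned to ``write out in full'' would fail as stated.

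The missing idea is an $n$-variable determinant identity, not a one-variable one. Writing each entry as $\binom{m}{k}=\ct_x\left[(1+x)^m x^{-k}\right]$ turns $\gt_{n,a,\min,\max}(b_1,\ldots,b_n)$ into a constant term of $\prod_{i=1}^{n} (1+x_i)^{b_i+i-\min-1}\det_{1\le i,j\le n}\left(x_i^{-j+1-b[j=n]}\right)$, and the bialternant identity
$$
\det_{1\le i,j\le n}\left(x_i^{-j+1-b[j=n]}\right)=h_b(x_1^{-1},\ldots,x_n^{-1})\,\det_{1\le i,j\le n}\left(x_i^{-j+1}\right),
$$
i.e.\ the Schur function $s_{(b,0,\ldots,0)}$ in the reciprocals, is what pulls out exactly the factor $\langle z^b\rangle\prod_{i=1}^{n}\frac{1}{1-x_i^{-1}z}$; the shift of the upper index by $\min$ contributes $\prod_{i=1}^{n}(1+x_i)^{-\min}$. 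Once this determinant identity is in place, the rest of your plan does go through: the new factors ride along unchanged under the $\st$-operator translation (which only converts shifts and differences in the $b_i$ into the multiplicative factors already present in Corollary~\ref{constant}), and the linearity in the sum over $b$ and the weakly increasing relaxation at $u=v=1$ are as you say.
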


\subsection{Truncated monotone triangles: $(\mathbf{s},\mathbf{t})$-trees} Next we turn to certain partial monotone triangles.
Let $l,r,n$ be non-negative integers with $l+r \le n$. Suppose 
$\mathbf{s}=(s_1,s_2,\ldots,s_l)$, $\mathbf{t}=(t_{n-r+1},t_{n-r+2},\ldots,t_n)$ are sequences of non-negative integers, where $\mathbf{s}$ is weakly decreasing, while $\mathbf{t}$ is weakly increasing. An $(\mathbf{s},\mathbf{t})$-tree of order $n$ is an integer array whose shape is obtained from the shape of a monotone triangle with $n$ rows when deleting the bottom $s_i$ entries from the $i$-th NE-diagonal for $1 \le i \le l$  (NE-diagonals are counted from the left) and the bottom $t_i$ entries from the $i$-th SE-diagonal for $n-r+1 \le i \le n$ (SE-diagonals are also counted from the left), see Figure~\ref{st_tree}. We assume in the following that 
there is no interference between the deletion of the entries in the $l$  
leftmost NE-diagonals (as prescribed by $\mathbf{s}$) with the deletion of the entries from the $r$ rightmost SE-diagonals (as prescribed by $\mathbf{t}$).

In an $(\mathbf{s},\mathbf{t})$-tree, an entry $m_{i,j}$ is said to be \emph{regular} if it has a SW neighbour $m_{i+1,j}$ and a SE neighbour $m_{i+1,j+1}$.
We require the following monotonicity properties in an 
$(\mathbf{s},\mathbf{t})$-tree: 
\begin{enumerate}
\item Each regular entry $m_{i,j}$ has to fulfill $m_{i+1,j} \le m_{i,j} \le m_{i+1,j+1}$.
\item Two adjacent regular entries $m_{i,j}, m_{i,j+1}$ in the same row have to be distinct.   
\end{enumerate}
This extends the notion of monotone triangles, as a monotone triangle of order $n$ is just an $(\mathbf{s},\mathbf{t})$-tree, where $l,r$ are any two numbers 
with $l+r \le n$, $\mathbf{s}=(\underbrace{0,\ldots,0}_{l})$ and $\mathbf{t}=(\underbrace{0,\ldots,0}_r)$.

\begin{figure}
\scalebox{0.3}{
\includegraphics{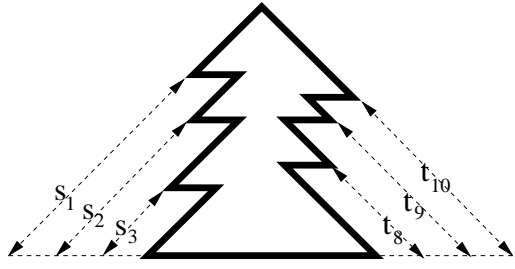}}
\caption{\label{st_tree} Shape of an $(\mathbf{s},\mathbf{t})$-tree.}
\end{figure}

We extend the definition of the two inversion numbers as follows. Suppose $M=(m_{i,j})$ (using the indexing indicated in \eqref{indexing}) is an $(\mathbf{s},\mathbf{t})$-tree of order $n$ and
define 
\begin{align*}
\inv(M) & = \# \{ m_{i,j} \text{ regular }: m_{i,j}=m_{i+1,j+1} \}, \\
\inv'(M) & = \# \{ m_{i,j}  \text{ regular }: m_{i,j}=m_{i+1,j}\}.
\end{align*}
We define further generalizations of the inversion numbers that are useful for our purposes: The generalization of $\inv$ depends on a subset $J \subseteq \{j | t_{j} < t_{j+1}\} \cup \{n\}$ of \emph{exceptional} SE-diagonals and is defined as 
\begin{multline*}
\inv_{J}(M) = \# \{ m_{i,j} \text{ regular }: m_{i,j}=m_{i+1,j+1} \\ \text{ and } m_{i+1,j+1} 
\text{ is not the bottom entry of an exceptional SE-diagonal} \}.
\end{multline*}
The generalization of the complementary inversion number $\inv'$ depends on a subset $I \subseteq \{1\} \cup \{i | s_{i-1} > s_{i} \}$ of exceptional NE-diagonals and is defined as 
\begin{multline*}
\inv'_{I}(M) = \# \{ m_{i,j} \text{ regular }: m_{i,j}=m_{i+1,j} \\ \text{ and } m_{i+1,j} 
\text{ is not the bottom entry of an exceptional NE-diagonal} \}.
\end{multline*}

Next we define generalizations of the forward difference operator and of the backward difference operator, namely the $v$-forward difference operator $\fd[v]_x$(w.r.t. the variable $x$) and the $u$-backward difference operator $\bd[u]_x$.
$$
\fd[v]_x = (1 + (1-v) \fd_x)^{-1} \fd_x \quad \text{and} \quad \bd[u]_x = (1+(u-1) \bd_x)^{-1} \bd_x
$$
These operators are well-defined in our context, since 
$$
(1 + (1-v) \fd_x)^{-1} = \sum_{i=0}^{\infty} (v-1)^{i} \fd_x^i \quad \text{and} \quad 
(1 + (u-1) \bd_x)^{-1} = \sum_{i=0}^{\infty} (1-u)^{i} \bd_x^i
$$
and the sums are finite if applied to polynomials, because for each polynomial $p(x)$ of degree $d$ we have 
$\fd_x^{i} p(x) = \bd_x^{i} p(x) = 0$ for all $i>d$. 
The following property of $\fd[v]_x$ is crucial: 
\begin{multline*}
 - \fd[v]_{b_1}  \left. \left[ \st_{b_1^{(1)},b_1^{(2)}} \dots  \st_{b_n^{(1)},b_n^{(2)}} 
\sum_{a_1=b_1^{(2)}}^{b_2^{(1)}} \sum_{a_2=b_2^{(2)}}^{b_3^{(1)}} \cdots
\sum_{a_{n-1}=b_{n-1}^{(2)}}^{b_n^{(1)}} f(\mathbf{a}) \right] \right|_{b_i^{(1)}=b_i^{(2)}=b_i} \\
= - \fd_{b_1} \left. \left[ \st_{b_2^{(1)},b_2^{(2)}} \dots  \st_{b_n^{(1)},b_n^{(2)}} 
\sum_{\raisebox{-1.2mm}{\tiny $a_1=b_1$}}^{b_2^{(1)}} \sum_{a_2=b_2^{(2)}}^{b_3^{(1)}} \cdots
\sum_{a_{n-1}=b_{n-1}^{(2)}}^{b_n^{(1)}} f(\mathbf{a}) \right] \right|_{b_i^{(1)}=b_i^{(2)}=b_i}  \\
= \left. \left[ \st_{b_2^{(1)},b_2^{(2)}} \dots  \st_{b_n^{(1)},b_n^{(2)}} 
\sum_{a_2=b_2^{(2)}}^{b_3^{(1)}} \cdots
\sum_{a_{n-1}=b_{n-1}^{(2)}}^{b_n^{(1)}} f(b_1,a_2,\ldots,a_{n-1}) \right] \right|_{b_i^{(1)}=b_i^{(2)}=b_i}
\end{multline*}
A similar identity is true for $\bd[u]_x$. These observations imply almost immediately the following result. The case $u=v=1$ appeared first in 
\cite{FischerRefEnumASM}, however the proof given there can be generalized easily, see also \cite{FischerASMProof2015}.

\begin{theo} Let $n,l,r$ be non-negative integers with $l+r \le n$. Suppose $b_1,\ldots,b_n$ is a strictly increasing sequence of integers, and $\mathbf{s}=(s_1,s_2,\ldots,s_l)$, $\mathbf{t}=(t_{n-r+1},t_{n-r+2},\ldots,t_n)$ are a
weakly decreasing and a weakly increasing sequence of non-negative integers, respectively.
Then the evaluation of the following polynomial
\begin{equation}
\label{stoperator}
(-\fd[v]_{x_1})^{s_1} \cdots (-\fd[v]_{x_l})^{s_l} 
\bd[u]_{x_{n-r+1}}^{t_{n-r+1}} 
  \cdots \bd[u]_{x_{n}}^{t_{n}}  
M_n(u,v,\mathbf{x})
\end{equation}
at $(x_1,\ldots,x_n)=(b_1,\ldots,b_n)$ 
is the generating function with respect to $\inv$ and $\inv'$ of $(\mathbf{s},\mathbf{t})$-trees of order $n$ with the following properties:
\begin{itemize}
\item For $1 \le i \le n-r$, the  bottom entry of the $i$-th NE-diagonal is $b_i$.
\item For $n-r+1 \le i \le n$, the bottom entry of the $i$-th SE-diagonal is $b_i$.
\end{itemize}
Furthermore, suppose  $I \subseteq \{1\} \cup \{i | s_{i-1} > s_{i} \}$ and $J \subseteq \{j | t_{j} < t_{j+1}\} \cup \{n\}$, then the generating function with respect to $\inv_{J}$ and $\inv'_{I}$ of these 
$(\mathbf{s},\mathbf{t})$-trees is obtained from \eqref{stoperator}  by applying
$$
\prod_{i \in I} \left( 1+(1-v) \fd_{x_i} \right)^{-1} \prod_{j \in J} \left( 1+(u-1) \bd_{x_j} \right)^{-1},
$$
and then evaluating at $(x_1,\ldots,x_n)=(b_1,\ldots,b_n)$. 
Finally, suppose $\min \le a \le \max$ are integers with $\min \le b_1$ and 
$b_n \le \max$, then we obtain the generating function of the above mentioned 
$(\mathbf{s},\mathbf{t})$-trees that have $a$ in the top row if 
$M_n(u,v,\mathbf{x})$ is replaced by $M_{n,a,\min,\max}(u,v,\mathbf{x})$.

When considering the special case $u=v=1$, all results are true also if we only require 
that $b_1,\ldots,b_n$ is weakly increasing.
\end{theo}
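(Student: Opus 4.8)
The plan is to prove the three assertions by reducing everything to repeated application of the crucial property of $\fd[v]$ (and its $\bd[u]$-analogue) displayed above, with Theorem~\ref{operator} serving as the base case $\mathbf{s}=\mathbf{t}=\mathbf{0}$, in which $(\mathbf{s},\mathbf{t})$-trees are ordinary monotone triangles and \eqref{stoperator} reduces to $M_n(u,v,\mathbf{x})$. The operators $-\fd[v]_{x_1},\dots,-\fd[v]_{x_l},\bd[u]_{x_{n-r+1}},\dots,\bd[u]_{x_n}$ act on pairwise distinct variables (this is where $l+r\le n$ and the no-interference hypothesis enter), hence they commute, so I may fix an order of application: first delete the bottom entries of the leftmost NE-diagonals, working from left to right, and then, independently, those of the rightmost SE-diagonals, working from right to left. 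The proof is then an induction on the total number of deleted entries $\sum_{i=1}^{l}s_i+\sum_{j=n-r+1}^{n}t_j$.

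The inductive step rests on a one-entry lemma: a single application of $-\fd[v]_{x_i}$ deletes the current bottom entry of the $i$-th NE-diagonal, forces the entry directly above it on that diagonal to take the prescribed value $b_i$, and suppresses exactly the $v$-weight this coincidence would otherwise contribute. To see this I would invoke the displayed identity for $\fd[v]_{x_1}$: the factor $(1+(1-v)\fd_{x_i})^{-1}$ inside $\fd[v]_{x_i}=(1+(1-v)\fd_{x_i})^{-1}\fd_{x_i}$ cancels the $(1-v)\fd$-contribution of the adjacent operator $\st_{x_i^{(1)},x_i^{(2)}}$, so that $-\fd[v]_{x_i}$ reduces to the plain $-\fd_{x_i}$ acting on a single collapsed summation $\sum_{a_i=b_i}^{\cdots}$, which then pins $a_i=b_i$ and discards the lower bound. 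Combinatorially the deleted entry's former neighbour ceases to be regular, so the dropped $v$-weight is exactly right for $\inv'$. The symmetric statement for $\bd[u]_{x_j}$ deletes the bottom entry of the $j$-th SE-diagonal, pins the new bottom to $b_j$, and adjusts $\inv$ through the $u$-weight; no sign is needed here because $\bd_{x_j}$ evaluates a summation at its upper end, $\bd_{b}\sum_{a=c}^{b}f(a)=f(b)$, with the correct sign already. The only subtlety is that $b_i$ appears both as the lower limit of $\sum_{a_i}$ and as the upper limit of $\sum_{a_{i-1}}$; the weak monotonicity of $\mathbf{s}$ resolves this, since deleting the $p$-th entry of NE-diagonal $i$ only occurs when $s_i\ge p$, and then $s_{i-1}\ge s_i\ge p$ guarantees that the summation making $b_i$ an upper limit has already been collapsed by the (at least as deep) truncation of NE-diagonal $i-1$, so that $-\fd[v]_{x_i}$ again acts purely at the lower boundary as in the leftmost case. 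The analogous remark, using weak monotonicity of $\mathbf{t}$ and peeling from the right, applies to $\bd[u]_{x_j}$.

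For the refinement by $\inv_J$ and $\inv'_I$ I would argue that \eqref{stoperator} produces the generating function in which every regular coincidence, including the one at the bottom entry of each truncated diagonal, is weighted; an exceptional NE-diagonal $i\in I\subseteq\{1\}\cup\{i:s_{i-1}>s_i\}$ is precisely a corner of the truncation staircase, where this bottom coincidence is free to be switched off. Since $(-\fd[v]_{x_i})^{s_i}=(1+(1-v)\fd_{x_i})^{-s_i}(-\fd_{x_i})^{s_i}$, inserting one more factor $(1+(1-v)\fd_{x_i})^{-1}$ merely shifts the power of the correction factor by one, and a direct check on a single diagonal using \eqref{left} shows this removes precisely the $v$-weight of that bottom coincidence, turning $\inv'$ into $\inv'_I$; symmetrically $(1+(u-1)\bd_{x_j})^{-1}$ for $j\in J$ turns $\inv$ into $\inv_J$. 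The corner condition on $I$ and $J$ is exactly what keeps the correction from clashing with the truncation of a neighbouring diagonal. For the prescription of the top entry $a$ one only replaces the base polynomial $\gt_n$ by $\gt_{n,a,\min,\max}$, i.e.\ $M_n$ by $M_{n,a,\min,\max}$; the difference operators act on the bottom variables $x_i,x_j$ and commute with this change of base case, so the whole argument goes through verbatim and now counts trees whose top entry is $a$. Finally, when $u=v=1$ one has $\fd[1]_x=\fd_x$ and $\bd[1]_x=\bd_x$, and the summation representation of $\mt_{\mathbf{b}}$, the identities \eqref{left} and \eqref{right}, and the crucial property of $\fd[v]$ all hold already for weakly increasing $\mathbf{b}$, so the hypothesis on $\mathbf{b}$ relaxes accordingly.

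I expect the main obstacle to be the precise bookkeeping of the two inversion statistics across the operator manipulations: one must verify that the $(1-v)$-cancellation inside $\fd[v]$ (and the $(u-1)$-cancellation inside $\bd[u]$) deletes the weight of exactly the coincidence attached to the removed entry and of no other, and that the surviving boundary entries are correctly reclassified as non-regular so as to contribute to neither $\inv$ nor $\inv'$. This in effect requires promoting the displayed crucial property, stated only for the leftmost variable $x_1$, to an arbitrary truncated diagonal; as explained above, the weak monotonicity of $\mathbf{s}$ and $\mathbf{t}$ together with the no-interference assumption are exactly the hypotheses that license this promotion, and once it is in place the result does indeed follow almost immediately, as asserted.
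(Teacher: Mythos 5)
Your proposal is correct and takes essentially the same route as the paper: starting from Theorem~\ref{operator} as the base case, you iterate the displayed crucial property of $-\fd[v]_{x}$ (and its $\bd[u]_{x}$ analogue) to peel off one truncated entry at a time, you justify the $I,J$ refinement via the degenerations $\st_{x,y}=1+(1-v)\fd_y$ and $\st_{x,y}=1+(u-1)\bd_x$ on functions independent of one variable together with the corner conditions $s_{i-1}>s_i$ and $t_j<t_{j+1}$, and you obtain the top-entry and $u=v=1$ statements by replacing $M_n$ with $M_{n,a,\min,\max}$ and by the weakly increasing versions of the underlying identities, exactly as the paper does. Your explicit ordering of the commuting operators and the observation that the weak monotonicity of $\mathbf{s}$ and $\mathbf{t}$ guarantees that the neighbouring summation is already collapsed is precisely the bookkeeping the paper leaves implicit in its remark that the observations ``imply almost immediately'' the result via the generalization of the proof in \cite{FischerRefEnumASM}.
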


In order to prove the modification concerning the generalized inversion numbers $\inv_{J}$ and $\inv'_{I}$, it has to be noted that 
$$\st_{x,y} =1+(1-v) \fd_y$$
when applied to functions that are independent of $x$, while 
$$
\st_{x,y}=1+(u-1) \bd_x
$$ 
when applied to functions that are independent of $y$. Compare also to \eqref{left} and \eqref{right}.

We translate the formula in the theorem into a constant term expression generalizing the constant term expression of Corollary~\ref{constant}: To this end, we first observe that the application of $\fd_{b_i}$ to the constant term 
expression in Corollary~\ref{constant} corresponds to the multiplication with $(1+x_i)-1=x_i$, while the application of $\bd_{b_i}$ corresponds to the multiplication with $1-1/(1+x_i)=x_i/(1+x_i)$. This implies the following relations:
$$
\fd[u]_{b_i} \sim \frac{x_i}{1+(1-v) x_i} \quad \text{and} \quad 
\bd[u]_{b_i} \sim \frac{x_i}{1+ u x_i}.
$$

\begin{cor} 
\label{constant1}
The generating function of $(\mathbf{s},\mathbf{t})$-trees as described in the theorem is given by the constant term in $x_1,\ldots,x_n$ of the 
following expression:
\begin{multline*}
\prod_{i=1}^{l} \left( \frac{x_i}{(v-1) x_i -1} \right)^{s_i} \prod_{i=n-r+1}^{n} 
\left( \frac{x_i}{u x_i +1} \right)^{t_i} \prod_{i=1}^{n} (1+x_i)^{b_i} x_i^{-n+1} \\
\times 
\prod_{1 \le i < j \le n} (x_i-x_j)(1+(1-v) x_i + u (x_j + x_i x_j))
\end{multline*}
If $I,J$ are two sets as described in the theorem, then the generating function with respect to $\inv_{J},\inv'_{I}$ is the constant term in $x_1,\ldots,x_n$
of the expression one obtains by multiplying the rational function above with 
$$
\prod_{i \in I} \frac{1}{1+(1-v)x_i} \prod_{j \in J} \frac{1+x_j}{1+u x_j}.
$$
If $\min \le a \le \max$ are integers with $\min \le b_1$ and $b_n \le \max$, then the generating function of the above mentioned 
$(\mathbf{s},\mathbf{t})$-trees whose top row is $a$ is the constant term of the rational function which is obtained by multiplying the respective expression by $P_{n,a,\min,\max}(\mathbf{x})$.
\end{cor}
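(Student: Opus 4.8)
The plan is to read off the asserted constant term expressions from the operator formula of the preceding theorem by transporting its difference operators through the constant term representation supplied by Corollary~\ref{constant}, using the dictionary $\fd_{b_i}\sim x_i$ and $\bd_{b_i}\sim x_i/(1+x_i)$ recorded just above the statement. The first thing I would do is upgrade Corollary~\ref{constant} to an identity of polynomials. Writing $F(\mathbf{b})$ for the constant term in $\mathbf{x}$ of the Laurent polynomial $C(\mathbf{x};\mathbf{b})=\prod_{i}(1+x_i)^{b_i}x_i^{-n+1}\prod_{i<j}(x_i-x_j)(1+(1-v)x_i+u(x_j+x_ix_j))$, the quantity $F(\mathbf{b})$ is a polynomial in $\mathbf{b}$, since each $b_i$ enters only through $(1+x_i)^{b_i}$ and extraction of the constant term turns this into a $\mathbb{Z}$-linear combination of binomials $\binom{b_i}{k}$. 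As $F(\mathbf{b})$ agrees with the polynomial $M_n(u,v,\mathbf{b})$ on all strictly increasing integer sequences, which form a Zariski dense subset of $\mathbb{C}^n$, I obtain the polynomial identity $M_n(u,v,\mathbf{b})=F(\mathbf{b})$; this frees me from any concern about the range of $\mathbf{b}$ once difference operators are applied.

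With the polynomial identity in hand, I apply the operator product $(-\fd[v]_{x_1})^{s_1}\cdots(-\fd[v]_{x_l})^{s_l}\bd[u]_{x_{n-r+1}}^{t_{n-r+1}}\cdots\bd[u]_{x_n}^{t_n}$ to both sides. On the left this is exactly the polynomial of the preceding theorem, whose evaluation at $\mathbf{x}=\mathbf{b}$ is the sought generating function. On the right each $\fd_{b_i}$ reaches $F(\mathbf{b})$ only through $(1+x_i)^{b_i}$, and $\fd_{b_i}(1+x_i)^{b_i}=x_i(1+x_i)^{b_i}$, $\bd_{b_i}(1+x_i)^{b_i}=\tfrac{x_i}{1+x_i}(1+x_i)^{b_i}$, so inside the constant term $\fd_{b_i}$ and $\bd_{b_i}$ act as multiplication by $x_i$ and by $x_i/(1+x_i)$. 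Feeding these into the defining series of $\fd[v]$ and $\bd[u]$ gives $-\fd[v]_{b_i}\sim x_i/((v-1)x_i-1)$ and $\bd[u]_{b_i}\sim x_i/(ux_i+1)$, and raising to the powers $s_i,t_i$ produces the first displayed expression. The refinement with respect to $\inv_J$ and $\inv'_I$ is identical: the extra operators $\prod_{i\in I}(1+(1-v)\fd_{x_i})^{-1}\prod_{j\in J}(1+(u-1)\bd_{x_j})^{-1}$ translate into multiplication by $\prod_{i\in I}(1+(1-v)x_i)^{-1}$ and $\prod_{j\in J}(1+(u-1)\tfrac{x_j}{1+x_j})^{-1}=\prod_{j\in J}\tfrac{1+x_j}{1+ux_j}$. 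For the top row constraint I replace $M_n$ by $M_{n,a,\min,\max}$ and invoke Theorem~\ref{top}, whose constant term representation multiplies $C(\mathbf{x};\mathbf{b})$ by $P_{n,a,\min,\max}(\mathbf{x})$; since this factor carries no dependence on $\mathbf{b}$, the difference operators still reach the integrand only through $(1+x_i)^{b_i}$, so the translation proceeds verbatim and leaves $P_{n,a,\min,\max}(\mathbf{x})$ as a plain multiplicative factor.

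The step requiring the most care, and the one I expect to be the main obstacle, is the passage of the formally inverted operators $(1+(1-v)\fd_x)^{-1}$ and $(1+(u-1)\bd_x)^{-1}$, and hence of $\fd[v]_x$ and $\bd[u]_x$, through the constant term map: the resulting multipliers are genuine rational functions, e.g. $\tfrac{x_i}{1+(1-v)x_i}=\sum_{m\ge0}(v-1)^m x_i^{m+1}$, whereas the operators act by terminating sums on the polynomial $F$. The reconciliation is the identity $\fd_{b_i}^m F(\mathbf{b})=\operatorname{CT}_{\mathbf{x}}[x_i^m C(\mathbf{x};\mathbf{b})]$, valid for every $m\ge0$; because $C$ has lowest $x_i$-power $x_i^{-n+1}$, both sides vanish once $m\ge n$, so the power series obtained on the constant term side has only finitely many nonzero terms and these reproduce exactly the finite operator computation. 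Since $C(\mathbf{x};\mathbf{b})$, and likewise $C(\mathbf{x};\mathbf{b})P_{n,a,\min,\max}(\mathbf{x})$, is a Laurent polynomial with exponents bounded below, the rational multipliers in the statement are to be expanded in non-negative powers of the $x_i$ in accordance with the paper's convention, and the resulting constant terms are finite and well defined, which completes the plan.
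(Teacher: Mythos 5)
Your proposal is correct and follows exactly the paper's route: the paper derives Corollary~\ref{constant1} from the preceding theorem by the same dictionary $\fd_{b_i}\sim x_i$, $\bd_{b_i}\sim x_i/(1+x_i)$ applied to the constant term expression of Corollary~\ref{constant}, yielding $-\fd[v]_{b_i}\sim x_i/((v-1)x_i-1)$, $\bd[u]_{b_i}\sim x_i/(1+ux_i)$ and the multipliers for $I$, $J$ and $P_{n,a,\min,\max}(\mathbf{x})$. Your additional care — upgrading Corollary~\ref{constant} to a polynomial identity via density of strictly increasing integer tuples, and checking that the inverted operator series terminate on the constant term side because the lowest $x_i$-power of the integrand is bounded below — only makes explicit the routine justifications the paper leaves tacit.
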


\section{Application: constant term formula for the generating function of $(m,n,k)$-Gog trapezoids}
\label{app}

Using the terminology of the previous section, $(m,n,k)$-Gog trapezoids with bottom row $b_1,\ldots,b_k$ correspond to $(\mathbf{s},\mathbf{t})$-trees of order $n$, where
$\mathbf{s}=\emptyset$, $\mathbf{t}=(0,1,\ldots,n-k-1)$, such that 
\begin{enumerate}
\item the bottom entries of the first $k$ NE-diagonals are $b_1,\ldots,b_k$,
\item the bottom entries of the last $n-k$ SE-diagonals are $m+k+1,\ldots,m+n$.
\end{enumerate}
In order to obtain the Gog trapezoid from the $(\mathbf{s},\mathbf{t})$-tree, 
one has to delete the $(k+1)$-st NE-diagonal from the $(\mathbf{s},\mathbf{t})$-tree. 

Corollary~\ref{constant1} now implies the following formulas.

\begin{theo}
\label{gogfirst}
The number of $(m,n,k)$-Gog trapezoids with bottom row $b_1,\ldots,b_k$ is the constant term of
$$
\prod_{i=1}^{k} (1+x_i)^{b_i} x_i^{-n+1} \prod_{i=k+1}^{n} (1+x_i)^{m+k+1} x_i^{-n+i-k}
\prod_{1 \le i < j \le n} (x_i-x_j)(1+x_j+x_i x_j).
$$
If $m=0$, then the bottom row is necessarily $1,2,\ldots,k$, and so
the number of $(0,n,k)$-Gog trapezoids is the constant term of 
$$
\prod_{i=1}^{k} (1+x_i)^{i} x_i^{-n+1} \prod_{i=k+1}^{n} (1+x_i)^{k+1} x_i^{-n+i-k}
\prod_{1 \le i < j \le n} (x_i-x_j)(1+x_j+x_i x_j).
$$
\end{theo}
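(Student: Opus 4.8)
The plan is to apply Corollary~\ref{constant1} with the specific dictionary established immediately before the statement, translating $(m,n,k)$-Gog trapezoids into $(\mathbf{s},\mathbf{t})$-trees and then reading off the constant term expression. Concretely, I set $\mathbf{s}=\emptyset$ (so $l=0$ and there are no NE-diagonal truncations) and $\mathbf{t}=(0,1,\ldots,n-k-1)$ (so $r=n-k$ and $t_{n-r+1+j}=j$ for $0\le j\le n-k-1$). The prescribed bottom entries are $b_1,\ldots,b_k$ for the first $k$ NE-diagonals and $m+k+1,\ldots,m+n$ for the last $n-k$ SE-diagonals. Since here $u=v=1$, the extra factors $\prod_{i=1}^{l}(x_i/((v-1)x_i-1))^{s_i}$ vanish trivially (empty product), and the factor coming from $\mathbf{t}$ is $\prod_{i=n-r+1}^{n}(x_i/(ux_i+1))^{t_i}=\prod_{i=k+1}^{n}(x_i/(x_i+1))^{\,i-k-1}$.

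Next I would substitute these data into the master expression of Corollary~\ref{constant1}. With $u=v=1$ the Vandermonde-type product becomes $\prod_{1\le i<j\le n}(x_i-x_j)(1+x_j+x_ix_j)$, matching the target. The base factor $\prod_{i=1}^{n}(1+x_i)^{b_i}x_i^{-n+1}$ splits into the first-$k$ part $\prod_{i=1}^{k}(1+x_i)^{b_i}x_i^{-n+1}$ and the last-$(n-k)$ part $\prod_{i=k+1}^{n}(1+x_i)^{m+k+1+(i-k-1)}x_i^{-n+1}=\prod_{i=k+1}^{n}(1+x_i)^{m+i}x_i^{-n+1}$, where I have used that the effective bottom value assigned to SE-diagonal $i$ is $m+i$. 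I then absorb the $\mathbf{t}$-factor: multiplying $(1+x_i)^{m+i}x_i^{-n+1}$ by $(x_i/(x_i+1))^{\,i-k-1}$ yields $(1+x_i)^{m+i-(i-k-1)}x_i^{-n+1+(i-k-1)}=(1+x_i)^{m+k+1}x_i^{-n+i-k}$ for each $k+1\le i\le n$, which is exactly the exponent pattern claimed in the theorem. Collecting the three products gives the first displayed formula verbatim.

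For the $m=0$ specialization I would argue combinatorially that when $m=0$ the upper bound on the $i$-th SE-diagonal is $i$, while the strict increase along rows together with positivity forces the entries of the $i$-th NE-diagonal to be at least $i$ at the top; hence the bottom row of a $(0,n,k)$-Gog trapezoid is pinned to $b_i=i$. Substituting $b_i=i$ into the first formula replaces $\prod_{i=1}^{k}(1+x_i)^{b_i}$ by $\prod_{i=1}^{k}(1+x_i)^{i}$, and since $m=0$ the second product's $(1+x_i)^{m+k+1}$ becomes $(1+x_i)^{k+1}$, producing the second display. The only genuine subtlety — the step I expect to require the most care — is the bookkeeping in the exponent shift $m+i-(i-k-1)=m+k+1$ and $-n+1+(i-k-1)=-n+i-k$, i.e.\ verifying that the $\mathbf{t}$-factor precisely cancels the $i$-dependence introduced by the effective bottom values $m+i$ so that the resulting exponent of $(1+x_i)$ is the constant $m+k+1$. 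Everything else is a direct and mechanical substitution into Corollary~\ref{constant1} together with the reminder that, in the $u=v=1$ case, only weak monotonicity of the prescribed bottom data is needed, which is satisfied here.
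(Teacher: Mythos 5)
Your proposal is correct and follows essentially the same route as the paper: the paper encodes $(m,n,k)$-Gog trapezoids as $(\mathbf{s},\mathbf{t})$-trees with $\mathbf{s}=\emptyset$, $\mathbf{t}=(0,1,\ldots,n-k-1)$, prescribed bottom entries $b_1,\ldots,b_k$ and $m+k+1,\ldots,m+n$, and then reads the formula off Corollary~\ref{constant1} at $u=v=1$, exactly as you do, with your exponent bookkeeping $(1+x_i)^{m+i}(x_i/(1+x_i))^{i-k-1}x_i^{-n+1}=(1+x_i)^{m+k+1}x_i^{-n+i-k}$ being the same (correct) computation the paper leaves implicit. Your forcing argument $i\le b_i\le m+i=i$ for the $m=0$ case matches what the paper asserts without proof, so nothing is missing.
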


We are interested in the refined enumeration of $(m,n,k)$-Gog trapezoids with respect to four parameters.

\subsection{Inversion numbers} The two inversion numbers are defined in the obvious way: Suppose $M=(m_{i,j})$ is an $(m,n,k)$-Gog trapezoid, then
\begin{align*}
\inv(M) & = \# \{ m_{i,j} : m_{i,j}=m_{i+1,j+1} \}, \\
\inv'(M) & = \# \{ m_{i,j} : m_{i,j}=m_{i+1,j}\}.
\end{align*}
So, in order for the entry $m_{i,j}$ to contribute to $\inv(A)$, it must have a SE neighbor in the $(m,n,k)$-Gog trapezoid and this neighbor must be equal to $m_{i,j}$; however, $m_{i,j}$ need not to have a SW neighbor. Similar for the complementary inversion number. Corollary~\ref{constant1} implies the following.

\begin{theo} 
\label{uv}
The generating function of $(m,n,k)$-Gog trapezoids with bottom row 
$b_1,\ldots,b_k$ w.r.t.\ $\inv$ and  $\inv'$ is the constant term of  
\begin{equation}
\label{gog1}
\prod_{i=1}^{k} (1+x_i)^{b_i} x_i^{-n+1} \prod_{i=k+1}^{n} (1+x_i)^{m+i+1} (1+ u x_i)^{k-i} x_i^{-n+i-k}
\prod_{1 \le i < j \le n} (x_i-x_j)(1+(1-v)x_i+u(x_j+x_i x_j))
\end{equation}
in $x_1,\ldots,x_n$.
\end{theo}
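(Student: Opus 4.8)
The plan is to transport the claim through the bijection recorded at the start of this section: $(m,n,k)$-Gog trapezoids with bottom row $b_1,\ldots,b_k$ are exactly the $(\mathbf{s},\mathbf{t})$-trees of order $n$ with $\mathbf{s}=\emptyset$ and $\mathbf{t}=(0,1,\ldots,n-k-1)$ whose first $k$ NE-diagonals have bottom entries $b_1,\ldots,b_k$ and whose last $n-k$ SE-diagonals have bottom entries $m+k+1,\ldots,m+n$, with the Gog trapezoid obtained by deleting the $(k+1)$-st NE-diagonal. Corollary~\ref{constant1} already hands us the constant-term generating function of these trees with respect to their (generalized) inversion numbers, so the only real work is to match the Gog statistics $\inv$, $\inv'$ with the correct tree statistics and then to simplify.

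First I would pin down the shape of these trees. Writing $t_p=p-k-1$ for $k+1\le p\le n$, a short computation shows that an entry $m_{i,c}$ of the underlying triangle is removed by the truncation precisely when its column index satisfies $c\ge k+2$, and never when $c\le k+1$. Thus the tree occupies exactly the full columns $1,\ldots,k+1$; it is a trapezoid with $k+1$ NE-diagonals, and deleting the $(k+1)$-st NE-diagonal produces the $k$-column Gog trapezoid without having truncated any surviving entry. In particular the regular entries of the tree are precisely the $m_{i,j}$ with $i<n$ and $j\le\min(i,k)$, a description I would use throughout.

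Next comes the statistic matching, which is the crux. I would first observe that the bottom entries of the exceptional SE-diagonals $J:=\{k+1,\ldots,n\}$ are exactly the entries $m_{k+1,k+1},\ldots,m_{n,k+1}$ of column $k+1$. Hence $\inv_J$ discards from $\inv$ exactly the coincidences $m_{i,k}=m_{i+1,k+1}$, i.e. the SE-coincidences whose lower entry lies in the deleted NE-diagonal. These are precisely the coincidences that vanish upon passing to the Gog trapezoid, since there an entry of the rightmost NE-diagonal has no SE neighbour; so I would conclude that the Gog statistic $\inv$ equals the tree statistic $\inv_J$. For the complementary count, deleting the $(k+1)$-st NE-diagonal destroys no SW-adjacency among the surviving entries, and the Gog entries possessing a SW neighbour are exactly the regular entries of the tree; this yields that the Gog $\inv'$ equals the tree $\inv'$, that is $\inv'_I$ with $I=\emptyset$.

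Finally I would specialize Corollary~\ref{constant1} to $l=0$, $r=n-k$, $t_i=i-k-1$, $I=\emptyset$ and $J=\{k+1,\ldots,n\}$, and simplify. Using $b_i=m+i$ for $i>k$, the base expression contributes the factor $(1+x_i)^{m+i}(1+ux_i)^{k+1-i}x_i^{-n+i-k}$ for each $k+1\le i\le n$, and the correction $\prod_{j\in J}\frac{1+x_j}{1+ux_j}$ attached to $\inv_J$ promotes this to $(1+x_i)^{m+i+1}(1+ux_i)^{k-i}x_i^{-n+i-k}$, exactly the factor in \eqref{gog1}; the factors $(1+x_i)^{b_i}x_i^{-n+1}$ for $i\le k$ and the product over pairs are carried over unchanged. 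The main obstacle is the matching of the previous paragraph: one must argue carefully that the pruning built into $\inv_J$ (removing the SE-coincidences into column $k+1$) corresponds bijectively to the disappearance of SE neighbours in the Gog trapezoid, and that no spurious coincidences are created or lost along the truncated boundary. Everything else is bookkeeping with the constant-term formula.
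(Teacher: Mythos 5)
Your proposal is correct and takes essentially the same route as the paper: there, Theorem~\ref{uv} is obtained by exactly this specialization of Corollary~\ref{constant1} to the $(\mathbf{s},\mathbf{t})$-tree encoding with $\mathbf{s}=\emptyset$, $\mathbf{t}=(0,1,\ldots,n-k-1)$, $I=\emptyset$ and $J=\{k+1,\ldots,n\}$, the paper's remark that the extra factor $\prod_{i=k+1}^{n}\frac{1+x_i}{1+ux_i}$ accounts for ``the entries in the $k$-th NE-diagonal that are equal to their upper bound'' being precisely your identification of the Gog statistic $\inv$ with the tree statistic $\inv_J$. Your write-up simply makes explicit the shape analysis and statistic matching that the paper leaves implicit.
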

The additional factor $\prod\limits_{i=k+1}^{n} \frac{1+x_i}{1+ u x_i}$ is caused by the 
fact that the $u$-weight should not take into account the entries in the $k$-th
NE-diagonal that are equal to their upper bound.

\subsection{Minima and maxima} 
In order to involve the number of minima and the number of maxima in the Gog-trapezoids, we have to apply the version of Corollary~\ref{constant1} where the requirement on $b_1,b_2,\ldots,b_n$ is only that it is weakly increasing, and thus we cannot involve the inversion numbers at the same time.

An $(m,n,k)$-Gog trapezoids with $p$ \emph{minima} can be identified with $(\mathbf{s},\mathbf{t})$-trees of order $n$, where $\mathbf{s}=(p-1)$, $\mathbf{t}=(0,1,\ldots,n-k-1)$ and where the bottommost entry of the first NE-diagonal is set to $2$. In order to see this, delete in the $(m,n,k)$-Gog trapezoids with $p$ minima the occurences of $1$ except for the topmost which is replaced by $2$. In order to obtain a formula for the number we 
have to multiply \eqref{gog1} by $\left( -x_1 \right)^{p-1}$ and set $b_1=2$.  We obtain 
\begin{multline}
\label{gog2}
(-1)^{p-1} (1+x_1)^2 x_1^{p-n} 
\prod_{i=2}^{k} (1+x_i)^{b_i} x_i^{-n+1} \prod_{i=k+1}^{n} (1+x_i)^{m+k+1} x_i^{-n+i-k} \\
\times
\prod_{1 \le i < j \le n} (x_i-x_j)(1+x_j+x_i x_j).
\end{multline}
The case $p=0$ is not covered by this formula. In that case, we use 
\eqref{gog1} with the appropriate $b_1>1$.

Now let $M \subseteq \{k+1,\ldots,n\}$ and suppose we require to have a \emph{maximum} in the $i$-th SE-diagonal precisely if $i \in M$ (and possibly in the $k$-th SE-diagonal, in which case we have $b_k=m+k$).
Then 
we need to multiply \eqref{gog2} by 
$$
\prod_{i \in M} \frac{x_i}{1+x_i} \prod_{i \in \{k+1,k+2,\ldots,n\} \setminus M} 
\frac{1}{1+x_i}.
$$
If we sum over all subsets $M$ with cardinality $q$, we obtain 
$$
\prod_{i=k+1}^{n} \frac{1}{1+x_i} 
e_q\left(x_{k+1},\ldots,x_n\right),
$$
where $e_q$ denotes the $q$-th elementary symmetric function. As $e_q(x_{k+1},\ldots,x_n)$ is the coefficient of $Q^q$ in $\prod\limits_{i=k+1}^n (1+Q x_i)$, we obtain the following theorem.

\begin{theo}
\label{gog}
The generating function of $(m,n,k)$-Gog trapezoids with bottom row 
$1,b_2,\ldots,b_k$ w.r.t.\ the weight
$$
P^{\# \text{of minima}} \, Q^{\# \text{of maxima not including $b_k$}}
$$
is the constant term of 
\begin{multline}
\label{gog3}
P (1+P x_1)^{-1} (1+x_1)^2 x_1^{-n+1} 
\prod_{i=2}^{k} (1+x_i)^{b_i} x_i^{-n+1}  \prod_{i=k+1}^{n} \left( 1+ Q x_i \right) (1+x_i)^{m+k} x_i^{-n+i-k} \\
\times
\prod_{1 \le i < j \le n} (x_i-x_j)(1+x_j+x_i x_j)
\end{multline}
in $x_1,\ldots,x_n$, while the generating function of $(m,n,k)$-Gog trapezoids with 
bottom row $b_1,b_2,\ldots,b_k$ w.r.t.\ the weight
$$
Q^{\# \text{of maxima not including $b_k$}}
$$
is the 
constant term of
\begin{equation}
\label{gog4}
\prod_{i=1}^{k} (1+x_i)^{b_i} x_i^{-n+1}  \prod_{i=k+1}^{n} \left( 1+ Q x_i \right) (1+x_i)^{m+k} x_i^{-n+i-k} 
\prod_{1 \le i < j \le n} (x_i-x_j)(1+x_j+x_i x_j)
\end{equation}
in $x_1,\ldots,x_n$.
\end{theo}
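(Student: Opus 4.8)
The plan is to assemble Theorem~\ref{gog} from Corollary~\ref{constant1} together with the minimum/maximum identifications recorded just before the statement, arranging the computation so that the minimum refinement and the maximum refinement act on disjoint sets of variables and therefore combine without interference. Because the minimum refinement resets the surviving copy of the entry $1$ on the leftmost NE-diagonal to $2$, the associated $(\mathbf{s},\mathbf{t})$-tree may satisfy $b_1=b_2=2$, so its prescribed bottom data is only weakly increasing; I would accordingly work throughout in the specialization $u=v=1$ and invoke the weakly-increasing version of Corollary~\ref{constant1}. This is exactly why the two inversion statistics cannot be tracked simultaneously.

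For the minima I would start from the base integrand of Theorem~\ref{gogfirst} and use the identification of $(m,n,k)$-Gog trapezoids with exactly $p\ge 1$ minima (hence bottom row $1,b_2,\ldots,b_k$) with $(\mathbf{s},\mathbf{t})$-trees having $\mathbf{s}=(p-1)$ and $b_1=2$. By Corollary~\ref{constant1} this multiplies the integrand by $(-x_1)^{p-1}$ and sets $b_1=2$, which is \eqref{gog2}; summing the $P$-weighted contributions over $p\ge 1$ then reduces, after pulling out $x_1^{-n}$, to the geometric series $\sum_{p\ge1}(-1)^{p-1}(Px_1)^p=Px_1/(1+Px_1)$, producing precisely the factor $P(1+Px_1)^{-1}(1+x_1)^2x_1^{-n+1}$ in \eqref{gog3}. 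Since the prescribed bottom row begins with $1$, the case $p=0$ does not arise and needs no separate treatment.

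The maxima I would handle directly through upper bounds, which I expect to be the delicate step. Fix $i\in\{k+1,\ldots,n\}$: the entry of the $i$-th SE-diagonal lying on the $k$-th NE-diagonal is its bottommost and hence largest entry within the trapezoid, so this diagonal carries a maximum exactly when that entry equals its upper bound $m+i$; equivalently, the trapezoids \emph{without} a maximum on the $i$-th SE-diagonal are precisely those in which every entry of the diagonal is at most $m+i-1$, i.e. those counted with the bound of the diagonal lowered by one. In the integrand lowering this bound multiplies by $(1+x_i)^{-1}$, so prescribing maxima exactly on $M\subseteq\{k+1,\ldots,n\}$ multiplies by $\prod_{i\in M}\frac{x_i}{1+x_i}\prod_{i\notin M}\frac{1}{1+x_i}$, with $\frac{x_i}{1+x_i}=1-\frac{1}{1+x_i}$ reading ``total minus no-maximum'' (the maximum possibly occurring in the $k$-th SE-diagonal when $b_k=m+k$ is excluded from the $Q$-count by definition, matching the restriction of $i$ to $\{k+1,\ldots,n\}$). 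Summing over $M$ with weight $Q^{|M|}$ collapses the product to $\prod_{i=k+1}^{n}\frac{1+Qx_i}{1+x_i}$, and absorbing it into $\prod_{i=k+1}^{n}(1+x_i)^{m+k+1}$ turns each factor into $(1+Qx_i)(1+x_i)^{m+k}$; together with the minimum factor this gives \eqref{gog3}, while running only the maximum step on the base integrand for a general bottom row $b_1,\ldots,b_k$ gives \eqref{gog4}. The points needing genuine care are the bound-lowering identity (in particular that the $k$-th NE-diagonal entry is the largest on its SE-diagonal, so lowering the bound is equivalent to forbidding a maximum) and the verification that the minimum factor involves only $x_1$ while the maximum factors involve only $x_{k+1},\ldots,x_n$, so that the two refinements indeed multiply cleanly.
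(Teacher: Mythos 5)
Your proposal is correct and follows essentially the same route as the paper: the $(\mathbf{s},\mathbf{t})$-tree identification with $\mathbf{s}=(p-1)$ and $b_1=2$ for the minima (summing the $P$-weighted factors $(-x_1)^{p-1}$ into $P(1+Px_1)^{-1}$), and the factors $\frac{x_i}{1+x_i}$ versus $\frac{1}{1+x_i}$ for prescribed maximum sets $M\subseteq\{k+1,\ldots,n\}$, summed over $M$ to give $\prod_{i=k+1}^{n}\frac{1+Qx_i}{1+x_i}$ exactly as in the paper's $e_q$ computation. Your explicit bound-lowering justification of the factor $(1+x_i)^{-1}$, and the observation that the two refinements involve disjoint variables, merely spell out steps the paper leaves implicit.
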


The generating functions in Theorems~\ref{uv} and \ref{gog} can be restricted further to Gog trapezoids with fixed top entry by multiplying with the Laurent series provided in Theorem~\ref{top}.

In case $m=0$, there is always a minimum and the bottom row is forced. Also note that the last entry of the bottom row is a maximum in this case.

\begin{cor} The generating function of $(0,n,k)$-Gog trapezoids w.r.t. the number of minima and maxima is the constant term of
\begin{multline*} 
P Q (1+P x_1)^{-1} (1+x_1)^2 x_1^{-n+1} 
\prod_{i=2}^{k} (1+x_i)^{i} x_i^{-n+1} \prod_{i=k+1}^{n} \left( 1+ Q x_i \right) (1+x_i)^{m+k} x_i^{-n+i-k} \\
\times
\prod_{1 \le i < j \le n} (x_i-x_j)(1+x_j+x_i x_j).
\end{multline*}
\end{cor}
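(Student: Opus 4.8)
The plan is to derive this corollary as the $m=0$ specialization of Theorem~\ref{gog}, using two elementary facts about $(0,n,k)$-Gog trapezoids. First I would recall from Theorem~\ref{gogfirst} that when $m=0$ the bottom row is forced: the bound $m+i=i$ on the $i$-th SE-diagonal together with strict increase along rows pins the bottom row down to $1,2,\ldots,k$. In particular $b_1=1$, so the hypothesis $b_1=1$ of formula~\eqref{gog3} holds automatically, and the substitution $b_i=i$ for $2 \le i \le k$ replaces the factor $\prod_{i=2}^{k}(1+x_i)^{b_i}$ by $\prod_{i=2}^{k}(1+x_i)^{i}$. After this substitution, and with $m=0$ so that $m+k=k$, formula~\eqref{gog3} already computes the generating function of $(0,n,k)$-Gog trapezoids with respect to $P^{\#\text{of minima}}\,Q^{\#\text{of maxima not including }b_k}$.

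The only remaining point is to pass from the weight that omits $b_k$ to the weight that counts every maximum, and this is where I would take care. The bottommost entry $b_k$ of the rightmost NE-diagonal is simultaneously the bottom entry of the $k$-th SE-diagonal, whose entries are bounded above by $m+k$; since $b_k=k=m+k$ when $m=0$, this entry always attains its upper bound and is therefore a maximum by definition. Hence in every $(0,n,k)$-Gog trapezoid the total number of maxima is exactly one more than the number of maxima other than $b_k$, so $Q^{\#\text{of all maxima}}=Q\cdot Q^{\#\text{of maxima not including }b_k}$. Multiplying the integrand of \eqref{gog3} by the resulting factor $Q$ is precisely what turns its prefactor $P$ into $PQ$.

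On the minimum side no correction is needed: the weight $P^{\#\text{of minima}}$ in \eqref{gog3} already records each entry equal to $1$ at the foot of the leftmost NE-diagonal, and $b_1=1$ merely guarantees there is at least one such minimum. Putting the two observations together, the generating function of $(0,n,k)$-Gog trapezoids with respect to the number of minima and the number of maxima equals $Q$ times the constant term of \eqref{gog3} evaluated at $m=0$ and $b_i=i$, which is exactly the displayed rational expression.

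I do not anticipate a genuine obstacle here, since the argument is a direct specialization of the already-proved Theorem~\ref{gog}. The one step that warrants a careful check against the definitions is the combinatorial claim that $b_k$ is always a maximum (and $b_1$ always a minimum) when $m=0$; once the correspondence between the bottom-row entries $b_1,\ldots,b_k$ and the SE-diagonals $1,\ldots,k$ with their respective bounds $m+1,\ldots,m+k$ is made explicit, this is immediate, and everything else is a mechanical substitution.
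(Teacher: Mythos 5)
Your proof is correct and follows exactly the paper's own route: the corollary is obtained from Theorem~\ref{gog} by the same three observations---when $m=0$ the bottom row is forced to $1,2,\ldots,k$, there is always a minimum (so formula~\eqref{gog3} applies with $b_1=1$), and $b_k=k=m+k$ always attains its bound and is therefore a maximum, supplying the extra factor $Q$ that turns the prefactor $P$ into $PQ$. Your explicit verification that $b_k$ is a maximum is precisely the step the paper states without elaboration.
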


In order to obtain the number of all $(m,n,k)$-Gog trapezoids with $p$ minima and $q$ maxima also if $m \not= 0$, one has to sum over all possible bottom rows and take into account whether the rightmost entry in the bottom row is a maximum. In the expressions in Theorem~\ref{gog}, the bottom row appears only through the factors $\prod\limits_{i=2}^{k} (1+x_i)^{b_i}$ and $\prod\limits_{i=1}^{k} (1+x_i)^{b_i}$, respectively.

\subsection{Gog pentagons} In this section, we consider generalizations of 
Gog trapezoids (so-called \emph{Gog pentagons}) that were recently introduced by Biane and Cheballah \cite{ASMpentagons}. In that paper also certain Magog-type objects were introduced 
(so-called \emph{GOGAm pentagons}) along with the conjecture that there is the same number of Gog pentagons of a given type as there is of GOGAm pentagons of the same type and that this is still true if we restrict to pentagons that have a prescribed bottom entry. 
In this section, we use Corollary~\ref{constant1} to derive constant term expressions for various refined countings of Gog pentagons including fixing the bottom entry.

Let $m,n,k_L,k_R$ be non-negative integers with $n+1 \le k_L + k_R$. An \emph{$(m,n,k_L,k_R)$-Gog pentagon} is an arrangement of positive integers of the following form 
$$
\begin{array}{ccccccccccc}
& & & & & & \bullet & & & &  \\
& & & & & \bullet & & \bullet & & &  \\
& & & & \bullet & & \bullet & & \bullet & &  \\
& & & \bullet & & \bullet & & \bullet & & \bullet &  \\
& & \bullet & & \bullet & & \bullet & & \bullet & & \bullet \\
& \bullet & & \bullet  & & \bullet & & \bullet & & \bullet & \\
 & & \bullet & & \bullet  & & \bullet & & \bullet & &
\end{array},
$$
where $n$ is the number of rows ($n=7$ in the example), $k_L$ is 
the number of NE-diagonals ($k_L=5$ in the example) and $k_R$ is the number 
of SE-diagonals ($k_R=6$ in the example).
Moreover, the entries in the $i$-th NE-diagonal, $1 \le i \le k_L$ (counted from the left), are bounded from below by $i$ and the entries in the $i$-th SE-diagonal, $1 \le i \le k_R$ (counted from the \emph{right}), are bounded from above by $m+n+1-i$. The entries are weakly increasing along NE-diagonals and 
SE-diagonals, and strictly increasing along rows. (By the strict increase along rows, it suffices to require that the entries in the $i$-th NE-diagonal are bounded from below by $i$ for $1 \le i \le n-k_R+1$ and that the entries in the $i$-th SE-diagonal are bounded from above by $m+n+1-i$ for $1 \le i \le n-k_L+1$.) 
A $(3,7,5,6)$-Gog pentagon is displayed next.
$$
\begin{array}{ccccccccccc}
& & & & & & 4 & & & &  \\
& & & & & 3 & & 5 & & &  \\
& & & & 2 & & 5 & & 8 & &  \\
& & & 2 & & 4 & & 7 & & 9 &  \\
& & 1 & & 4 & & 6 & & 7 & & 10 \\
& 1 & & 4 & & 5 & & 8 & & 8 & \\
 & & 3 & & 5 & & 6 & & 8 & &
\end{array}
$$
Obviously, $(m,n,k)$-Gog trapezoids are $(m,n,k,n)$-Gog pentagons. Gog pentagons were first defined by Biane and Cheballah in \cite{ASMpentagons}; they use different parameters: $(m,n,k_L,k_R)$-Gog pentagons as defined here correspond to their $(m+n,k_R,k_L,n)$-Gog pentagons. Also, as we reflected their pentagons along a horizontal axis, the bottom entry in their pentagons is the top entry of the pentagons as defined here.

\subsection{Inversion numbers}
We observe that $(m,n,k_L,k_R)$-Gog pentagons with bottom row 
$b_{n-k_R+1},b_{n-k_R+2},\ldots,b_{k_L}$ correspond to $(\mathbf{s},\mathbf{t})$-trees of order $n$, where $\mathbf{s}=(n-k_R-1,n-k_R-2,\ldots,0)$ and $\mathbf{t}=(0,1,\ldots,n-k_L-1)$. This implies that the number of $(m,n,k_L,k_R)$-Gog pentagons with bottom row $b_{n-k_R+1},\ldots,b_{k_L}$ is equal to 
\begin{multline*}
(-1)^{\binom{n-k_R}{2}}
\prod_{i=1}^{n-k_R} (1+x_i)^i x_i^{1-k_R-i} \prod_{i=n-k_R+1}^{k_L} (1+x_i)^{b_i} x_i^{-n+1} \prod_{i=k_L+1}^{n} (1+x_i)^{m+k_L+1} x_i^{-n+i-k_L} \\
\times \prod_{1 \le i < j \le n} (x_i-x_j)(1+x_j+x_i x_j).
\end{multline*}
If we define the inversion numbers for Gog pentagons in the obvious way, it follows that the generating function 
of $(m,n,k_L,k_R)$-Gog pentagons with bottom row 
$b_{n-k_R+1},\ldots,b_{k_L}$ and w.r.t. the two inversion numbers are 
\begin{multline*}
(-1)^{\binom{n-k_R}{2}}
\prod_{i=1}^{n-k_R} (1+x_i)^i (1+(1-v)x_i)^{-n+k_R+i-1} x_i^{1-k_R-i} \prod_{i=n-k_R+1}^{k_L} (1+x_i)^{b_i} x_i^{-n+1} \\
\times \prod_{i=k_L+1}^{n} (1+x_i)^{m+i+1} (1+u x_i)^{k_L-i} x_i^{-n+i-k_L} 
 \prod_{1 \le i < j \le n} (x_i-x_j)(1+ (1-v)x_i+u(x_j+x_i x_j)).
\end{multline*}

In order to obtain the generating function of $(m,n,k_L,k_R)$-Gog pentagons with top row $a$, one has to multiply the expression with $P_{n,a,\min,\max}(\mathbf{x})$, where $\min \le a \le \max$ and $\min \le 1$ and $\max \ge m+n$. 

\subsection{Minima and maxima}
A \emph{bottom-minimum} is an entry in the leftmost SE-diagonal that is equal to the lower bound of its SE-diagonal, while a \emph{bottom-maximum} is an entry in the rightmost NE-diagonal that is equal to the upper bound of its NE-diagonal. A \emph{top-minimum} is an entry equal to $1$ and such entries can only be in the leftmost NE-diagonal. A \emph{top-maximum} is an entry equal to $m+n$ and such entries can only be in the rightmost SE-diagonal. Observe that minima (resp. maxima) as defined for Gog trapezoids are top-minima (resp. bottom-maxima) as defined 
for Gog pentagons.

Also in this case, it is not possible to include both, the inversions numbers, and the numbers of maxima and minima because for the numbers of minima and maxima we need to apply Corollary~\ref{constant1} in instances where $b_1,\ldots,b_n$ is not necessarily strictly increasing.

Corollary~\ref{constant1} implies in a similar way as for Gog trapezoids that the generating function of $(m,n,k_L,k_R)$-Gog pentagons with bottom row  $b_{n-k_R+1},\ldots,b_{k_L}$ 
w.r.t. the weight
$$
Q_L^{\# \text{of bottom minima not including $b_{n-k_R+1}$}}
Q_R^{\# \text{of bottom maxima not including $b_{k_L}$}}
$$
is the constant term of
\begin{multline*}
(-1)^{\binom{n-k_R}{2}}
\prod_{i=1}^{n-k_R} \left(1- Q_L \frac{x_i}{1+x_i} \right) (1+x_i)^{i+1} x_i^{1-k_R-i} \prod_{i=n-k_R+1}^{k_L} (1+x_i)^{b_i} x^{-n+1} \\
\times \prod_{i=k_L+1}^{n}(1+Q_R x_i) (1+x_i)^{m+k_L} x_i^{-n+i-k_L}  \prod_{1 \le i < j \le n} (x_i-x_j)(1+x_j+x_i x_j)
\end{multline*}
w.r.t.\ $x_1,\ldots,x_n$.
Finally, the generating function of $(m,n,k_L,k_R)$-Gog pentagons with bottom row $b_{n-k_R+1},\ldots,b_{k_L}$ that have at least one top-minimum and at least one top-maximum w.r.t. the weight
$$
P_L^{\# \text{of top minima}} P_R^{\# \text{of top maxima}}
Q_L^{\# \text{of bottom minima not including $b_{n-k_R+1}$}}
 Q_L^{\# \text{of bottom maxima not including $b_{k_L}$}}
$$
is the constant term of the following expression in $x_1,\ldots,x_n$.
\begin{multline*}
 (-1)^{\binom{n-k_R}{2}+1} P_L (1+P_L x_1)^{-1} (1+x_1)^2 x_1^{-k_R+1}
P_R (1+x_n - P_R x_n)^{-1} x_n^{-k_L+1} (1+x_n)^{k_L+m} \\
\times
Q_L Q_R \prod_{i=2}^{n-k_R} \left(1- Q_L \frac{x_i}{1+x_i} \right)(1+x_i)^{i+1} x_i^{1-k_R-i} \prod_{i=n-k_R}^{k_L} (1+x_i)^{b_i} x_i^{-n+1} \\
\times \prod_{i=k_L+1}^{n-1} (1+Q_R x_i) (1+x_i)^{m+k_L} x_i^{-n+i-k_L} 
 \prod_{1 \le i < j \le n} (x_i-x_j)(1+ x_j+x_i x_j).
\end{multline*}
In order to obtain the generating functions of Gog pentagons where the top row is fixed, one has to  proceed as in the previous subsection.

\section{Constant term formula for the number of $(m,n,k)$-Magog trapezoids with prescribed numbers of minima and maxima} 
\label{magog}

In this section we derive constant term formulas for the number of $(m,n,k)$-Magog trapezoids with $p$ maxima and $q$ minima. Together with Theorem~\ref{gog}, where a constant term formula for the number of $(m,n,k)$-Gog trapezoids with $p$ minima and $q$ maxima is provided, this could serve as a framework to give a computational proof of Conjecture~\ref{conj_christ}.

In special cases such constant term formulas have been derived before: Zeilberger gave a constant term expression for the unrestricted enumeration of $(0,n,k)$-Magog trapezoids in
\cite{ZeilbergerASMProof} (see also \cite{zeilbergerconstant}), Krattenthaler extended this to $(m,n,k)$-Magog trapezoids in \cite{KrattGogMagog}, and Ishikawa finally included the number of maxima \cite{Ishikawa}. Their formulas are different from ours as the rational functions underlying the constant term formulas involve in general determinants.

\subsection{First version}
Using a standard technique, we can transform $(m,n,k)$-Magog trapezoids with prescribed bottom row $(b_{n-k+1},b_{n-k+2},\ldots,b_n)$ into families of non-intersecting lattice paths as follows: We start by adding a new leftmost SE-diagonal consisting entirely of $1$'s and a new rightmost SE-diagonal consisting of $(m+1,m+2,\ldots,m+n)$ as indicated in green in our running Example \ref{magogexample}: 
$$
\begin{array}{ccccccccccccccc}
& & & & & & &  \color{green} 3  &  & &  & &&& \\
& & & & & & \color{blue} 3 & & \color{green} 4  & & &&&& \\
& & & & & 2 & & 3 & & \color{green} 5 & &&&&  \\
& & & & 2 & & 3 & & 4 & & \color{green} 6 &&&& \\
& & & 1 & & 3 & & 4  & & 5 & & \color{green} 7 &&& \\
& & \color{red} 1 & & 2 & & 3 & & 5 & & \color{blue} 7 && \color{green} 8 &&  \\
&  \color{green}  1 & &  \color{red} 1 & & 3 & & 4 & & 6 & & 7 & & \color{green} 9 \\
 & & \color{green} 1 & & 2 & & 3 & & 4 & & 7 & & \color{blue} 9
\end{array}.
$$
Next, we add $i-1$ to the $i$-th NE-diagonal, counted from the left, for all $1 \le i \le n$. Then we associate with each NE-diagonal a lattice path with north steps and east steps where the entries of the NE-diagonals are the heights of the paths, and, as long as $1 \le i \le n-k-1$,  the $x$-coordinate of the starting point is shifted by one unit to the left when passing from the $i$-th NE-diagonal to the $(i+1)$-st NE-diagonal, while these $x$-coordinates are the same for the $k+1$ rightmost NE-diagonals. The family of non-intersecting lattice paths in Figure~\ref{nilp} corresponds to the Magog in \eqref{magogexample}. Note that each of the paths starts and ends with a horizontal step and thus we cut off these horizontal steps in the following.

\begin{figure}
\scalebox{0.8}{
\psfrag{1}{\large $1$}
\psfrag{2}{\large$2$}
\psfrag{3}{\large$3$}
\psfrag{4}{\large$4$}
\psfrag{5}{\large$5$}
\psfrag{6}{\large$6$}
\psfrag{7}{\large $7$}
\psfrag{8}{\large $8$}
\psfrag{9}{\large $9$}
\psfrag{10}{\large $10$}
\psfrag{11}{\large $11$}
\psfrag{12}{\large $12$}
\psfrag{13}{\large $13$}
\psfrag{14}{\large $14$}
\psfrag{15}{\large $15$}
\psfrag{0}{\large $(0,0)$}
\includegraphics{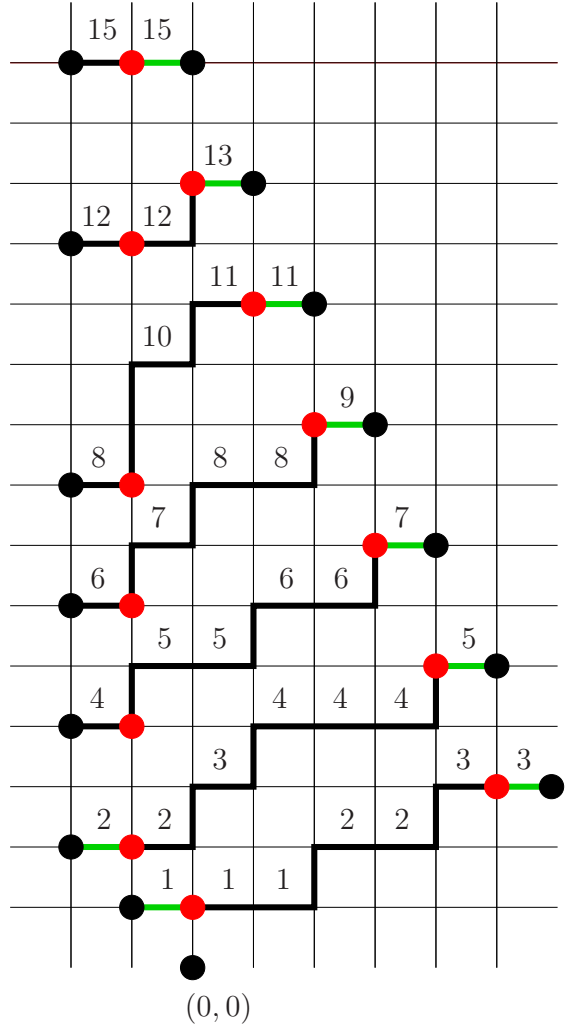}}\caption{\label{nilp} Non-intersecting lattice paths}\end{figure}

Now it can be deduced that $(m,n,k)$-Magogs with bottom row $(b_{n-k+1},b_{n-k+1},\ldots,b_n)$ correspond to the families of non-intersecting lattice paths starting at $(-i+1,i)$, $1 \le i \le n-k$, and $(-n+k+1,b_i+i-1)$, $n-k+1 \le i \le n$, and ending at $(k-j+1,m+2j-1)$, $1 \le j \le n$, allowing steps in north direction and east direction. The last entry of a NE-diagonal is a maximum if  the last step of the respective lattice path is horizontal, while the first entry of the $n-k$ leftmost NE-diagonals is a minimum if the first step of the respective path is horizontal. The following generating function of lattice paths starting at $(a,b)$ and ending at $(c,d)$ 
$$
\sum_{\text{lattice path $(a,b) \to (c,d)$}} 
Q^{[\text{first step is horizontal}]} P^{[\text{last step is horizontal}]}
$$
allowing steps in north direction and east direction is 
\begin{multline*}
\mathcal{N}((a,b+1) \to (c,d-1)) + Q \, \mathcal{N}((a+1,b) \to (c,d-1)) + \\ 
P \, \mathcal{N}((a,b+1) \to (c-1,d)) + P Q \, \mathcal{N}((a+1,b) \to (c-1,d)),
\end{multline*}
where $\mathcal{N}((a,b) \to (c,d))$ denotes the number of lattice paths starting at 
$(a,b)$ and ending at $(c,d)$ (which is equal to $\binom{c+d-a-b}{d-b}$). Thus, 
this generating function is 
$$
\binom{c+d-a-b-2}{d-b-2} + (P+Q) \binom{c+d-a-b-2}{d-b-1} + 
P Q \binom{c+d-a-b-2}{d-b},
$$
which evaluates to 
$$
\binom{c+d-a-b-1}{d-b-1} + P \binom{c+d-a-b-1}{d-b}
$$
if $Q=1$. If we use 
$$
\binom{n}{k} = \begin{cases} \frac{n(n-1) \cdots (n-k+1)}{k!} & k \ge 0 \\ 0 & k < 0 \end{cases},  
$$
then the formulas for the generating functions are true for any choice of integers $a,b,c,d$ (in particular also when there is no path from $(a,b)$ to $(c,d)$) if whenever the upper parameter of each binomial coefficient is negative, then also the lower parameter is negative. 
The Lindstr\"om-Gessel-Viennot theorem \cite{Lindstr,GesselViennot} now implies that the generating function of $(m,n,k)$-Magog trapezoids that have bottom row $(b_{n-k+1},\ldots,b_{n})$ is 
$$
\det_{1 \le i, j \le n} \left( \begin{cases} \binom{j+k+m-3}{2j+m-i-3} + 
(P+Q) \binom{j+k+m-3}{2j+m-i-2} + P Q \binom{j+k+m-3}{2j+m-i-1} , & i=1,\ldots,n-k \\
\binom{j+m+n-b_i-i-1}{2j+m-b_i-i-1} + P \binom{j+m+n-b_i-i-1}{2j+m-b_i-i}, & i=n-k+1,\ldots,n \end{cases} \right),
$$
where the weight is
$$
P^{\# \text{ of maxima}} \cdot Q^{\# \text{ of minima not contained in the bottom row}}.
$$
In case $1 \le i \le n-k$, the upper parameter of a binomial coefficient in the matrix is negative only if $j=1, k=1, m=0$; then all lower parameters are also negative, except the one after $PQ$ if $i=1$ which is $0$ and it can be checked that also in this case we have the correct generating function. In case $n-k+1 \le i \le n$, the upper parameters are either no less than the lower parameters or non-negative or the lower parameters are negative, except for the binomial coefficient after $P$ when 
$j+m+n-b_i-i-1=-1$ and $2j+m-b_i-i=0$, and it can be checked that also in this case we obtain the correct result.

Using $\binom{n}{k} = (-1)^{k} \binom{k-n-1}{k}$, this is equal to
\begin{multline*}
(-1)^{
(m-1)n + \binom{n+1}{2} + \sum\limits_{i=n-k+1}^{n} b_i} \\
\times \det_{1 \le i, j \le n} \left( \begin{cases} \binom{j-i-k-1}{2j+m-i-3} - (P+Q) \binom{j-i-k}{2j+m-i-2} + P Q \binom{j-i-k+1}{2j+m-i-1}, & i=1,\ldots,n-k \\
\binom{j-n-1}{2j+m-b_i-i-1} - P \binom{j-n}{2j+m-b_i-i}, & i=n-k+1,\ldots,n \end{cases} \right).
\end{multline*}
Now, as $\ct_x \frac{(1+x)^n}{x^k} = \binom{n}{k}$, where $\ct_x$ denotes the constant term in $x$, this is equal to 
\begin{multline*}
\ct_{x_1,\ldots,x_n} (-1)^{(m-1)n + \binom{n+1}{2} + \sum\limits_{i=n-k+1}^{n} b_i} \\
\times \det_{1 \le i, j \le n} \left( \begin{cases} \frac{(1+x_i)^{j-i-k-1}}{x_i^{2j+m-i-3}} - (P+Q)  \frac{(1+x_i)^{j-i-k}}{x_i^{2j+m-i-2}} + P Q  
 \frac{(1+x_i)^{j-i-k+1}}{x_i^{2j+m-i-1}},  & i=1,\ldots,n-k \\
 \frac{(1+x_i)^{j-n-1}}{x_i^{2j+m-b_i-i-1}} - P \frac{(1+x_i)^{j-n}}{x_i^{2j+m-b_i-i}}, & i=n-k+1,\ldots,n \end{cases} \right).
\end{multline*}
After pulling out the factor 
$$
\prod_{i=1}^{n-k} \frac{(1+x_i)^{-i-k}}{x_i^{m-i+1}} 
\left( x_i -P (1+x_i) \right) 
\left( x_i -Q (1+x_i) \right)  
\prod_{i=n-k+1}^{n} \frac{(1+x_i)^{-n}}{x_i^{m-b_i-i+2}} 
\left( x_i -P (1+x_i) \right)
$$
of the determinant, it remains 
$$
\det_{1 \le i, j \le n} \left( (x_i^{-1} + x_i^{-2})^{j-1} \right) = 
\prod_{1 \le i < j \le n} \left( x_j^{-1}+x_j^{-2}-x_i^{-1}-x_i^{-2} \right) =
\prod_{1 \le i < j \le n} \frac{(x_i-x_j)(x_i+x_j+x_i x_j)}{x_i^2 x_j^2},
$$
which can be computed using the Vandermonde determinant evaluation. 

We obtain the following theorem.

\begin{theo}
The generating function of $(m,n,k)$-Magog trapezoids with bottom row 
$b_{n-k+1},\ldots,b_n$ and w.r.t. the weight
$$
P^{\# \text{of maxima}} Q^{\# \text{of minima not including $b_{n-k+1}$}}
$$
is the constant term of the following expression in $x_1,\ldots,x_n$.
\begin{multline}
\label{magog1}
(-1)^{(m-1)n + \binom{n+1}{2} + \sum\limits_{i=n-k+1}^{n} b_i}
\prod_{i=1}^{n-k} (1+x_i)^{-i-k} x_i^{-m-2n+i+1} 
\left( x_i -P (1+x_i) \right) 
\left( x_i -Q (1+x_i) \right)  \\ \times
\prod_{i=n-k+1}^{n} (1+x_i)^{-n} x_i^{-m-2n+b_i+i} 
\left( x_i -P (1+x_i) \right)
\prod_{1 \le i < j \le n} (x_i-x_j)(x_i+x_j+x_i x_j).
\end{multline}
\end{theo}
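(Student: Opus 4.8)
The plan is to pass from $(m,n,k)$-Magog trapezoids to families of non-intersecting lattice paths and to evaluate the resulting Lindstr\"om--Gessel--Viennot determinant as a constant term. First I would carry out the bijection sketched above: augment a given trapezoid with a leftmost SE-diagonal of $1$'s and a rightmost SE-diagonal $(m+1,\ldots,m+n)$, add $i-1$ to the $i$-th NE-diagonal so that the NE-diagonals become strictly increasing, and read the (shifted) entries of each NE-diagonal as the successive heights of a north/east lattice path. The weak monotonicity along NE- and SE-diagonals translates exactly into the non-intersection condition, so that $(m,n,k)$-Magog trapezoids with bottom row $(b_{n-k+1},\ldots,b_n)$ are in bijection with the families of non-intersecting paths from the sources $(-i+1,i)$, $1\le i\le n-k$, and $(-n+k+1,b_i+i-1)$, $n-k+1\le i\le n$, to the sinks $(k-j+1,m+2j-1)$, $1\le j\le n$. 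Under this bijection a minimum (an entry $1$ in the leftmost SE-diagonal, hence in one of the first $n-k$ NE-diagonals) corresponds to a path whose first step is horizontal, and a maximum corresponds to a path whose last step is horizontal.

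Next I would compute the weighted generating function of a single path from $(a,b)$ to $(c,d)$ carrying the weight $Q^{[\text{first step horizontal}]}P^{[\text{last step horizontal}]}$. Splitting according to whether the first and the last step is a north step or an east step expresses this as the four-term sum of path counts $\mathcal{N}(\cdot\to\cdot)$ displayed above, which collapses via the Pascal/Chu--Vandermonde identity to $\binom{c+d-a-b-1}{d-b-1}+P\binom{c+d-a-b-1}{d-b}$ when $Q=1$ and, in general, to the three-term binomial expression recorded in the text. Substituting the concrete sources and sinks turns the LGV determinant into the explicit $n\times n$ binomial determinant stated, and here I would have to verify---exactly as flagged in the running discussion---that with the convention $\binom{n}{k}=\frac{n(n-1)\cdots(n-k+1)}{k!}$ (and $=0$ for $k<0$) the entries remain correct in the few degenerate corners (e.g.\ $j=1$, $k=1$, $m=0$ in the top block, or $j+m+n-b_i-i-1=-1$ with $2j+m-b_i-i=0$ in the bottom block).

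Finally I would convert the determinant into a constant-term expression. Applying $\binom{n}{k}=(-1)^k\binom{k-n-1}{k}$ to reverse the binomial coefficients and then $\ct_{x}\frac{(1+x)^n}{x^k}=\binom{n}{k}$ replaces each matrix entry by a constant term in its own variable $x_i$; pulling the constant-term operator outside the determinant and factoring from the $i$-th row the common factor $(x_i-P(1+x_i))(x_i-Q(1+x_i))$ (respectively $(x_i-P(1+x_i))$ in the bottom block) together with the appropriate powers of $x_i$ and $1+x_i$ leaves the determinant $\det_{1\le i,j\le n}\big((x_i^{-1}+x_i^{-2})^{j-1}\big)$. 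This is a Vandermonde determinant in the quantities $x_i^{-1}+x_i^{-2}$, and evaluating it gives $\prod_{1\le i<j\le n}\frac{(x_i-x_j)(x_i+x_j+x_ix_j)}{x_i^2x_j^2}$, which combines with the pulled-out prefactor and the global sign $(-1)^{(m-1)n+\binom{n+1}{2}+\sum_{i=n-k+1}^{n}b_i}$ to yield exactly \eqref{magog1}.

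The main obstacle I anticipate is not the algebra of the determinant manipulation---which is routine once the path model is in place---but the careful bookkeeping at the two ends: establishing that the first/last horizontal steps faithfully encode minima/maxima (so that the weight $P^{\#\text{of maxima}}Q^{\#\text{of minima not including }b_{n-k+1}}$ is correctly tracked, in particular that the leftmost minimum of the bottom row is properly excluded), and checking the degenerate binomial-coefficient cases so that the combinatorially derived determinant agrees with the stated formula for all admissible $m,n,k$ and all admissible bottom rows.
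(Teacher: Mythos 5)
Your proposal is correct and coincides with the paper's own proof essentially step for step: the same augmented lattice-path bijection with sources $(-i+1,i)$ and $(-n+k+1,b_i+i-1)$ and sinks $(k-j+1,m+2j-1)$, the same endpoint-weighted single-path generating function, the same Lindstr\"om--Gessel--Viennot determinant with the same degenerate binomial-coefficient checks, and the same passage to a constant term via $\binom{n}{k}=(-1)^k\binom{k-n-1}{k}$, row-factor extraction, and the Vandermonde evaluation of $\det_{1\le i,j\le n}\bigl((x_i^{-1}+x_i^{-2})^{j-1}\bigr)$. The two points you flag as delicate (the encoding of minima/maxima by first/last horizontal steps, with the bottom-row minimum excluded, and the boundary cases of the binomial convention) are exactly the points the paper itself verifies, so there is no gap.
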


We set 
\begin{multline*}
M_{m,n,k}(x_1,\ldots,x_n)  \\
= (-1)^{(m-1)n + \binom{n-k+1}{2}} 
\prod_{i=1}^{n-k} (1+x_i)^{-i-k} x_i^{-m-2n+i+1} 
\left( x_i -P (1+x_i) \right) 
\left( x_i -Q (1+x_i) \right)  \\ \times
\prod_{i=n-k+1}^{n} (1+x_i)^{-n} x_i^{-m-2n} 
\left( x_i -P (1+x_i) \right)
\prod_{1 \le i < j \le n} (x_i-x_j)(x_i+x_j+x_i x_j), 
\end{multline*}
so that the generating function is 
$$
\prod_{i=n-k+1}^{n} (-x_i)^{b_i+i} M_{m,n,k}(x_1,\ldots,x_n).
$$
It will be crucial that $M_{m,n,k}(x_1,\ldots,x_n)$ is an antisymmetric function 
in $x_{n-k+1},\ldots,x_n$. 
It follows that the generating function of $(m,n,k)$-Magogs w.r.t. the weight
$$
P^{\# \text{ of maxima}} \cdot Q^{\# \text{ of minima}}
$$
is the constant term of 
\begin{multline}
\label{magog2}
\left( Q \left( \sum_{1 \le b_{n-k+2} \le b_{n-k+3} \le \ldots \le b_n} y_{n-k+1}^{n-k+2} 
\prod_{i=n-k+2}^{n} y_{i}^{b_i+i} \right) + 
\left( \sum_{2 \le b_{n-k+1} \le b_{n-k+2} \le \ldots \le b_n}  
\prod_{i=n-k+1}^{n} y_{i}^{b_i+i} \right) \right) \\
\times M_{m,n,k}(x_1,\ldots,x_n) \\
= \left( Q + \frac{\prod_{i=n-k+1}^{n} y_i}{1-\prod_{i=n-k+1}^{n} y_i} \right)
\frac{\prod_{i=n-k+1}^{n} y_i^{i+1}}{\prod_{i=n-k+2}^{n} \left(1-\prod_{j=i}^{n} y_j \right)} M_{m,n,k}(x_1,\ldots,x_n),
\end{multline}
where we set $y_i=-x_i$.
We use the following notation: Suppose $F(x_{r+1},\ldots,x_{r+s})$ is a function in $x_{r+1},\ldots,x_{r+s}$ and $\sigma \in {\mathcal S}_s$, then 
$$
\sigma [ F(x_{r+1},\ldots, x_{r+s}) ]:= F(x_{\sigma(r+1)},\ldots,x_{\sigma(r+s)}).
$$
Now the constant term of the expression in \eqref{magog2} is the constant term of
\begin{multline}
\label{magog3}
\frac{1}{(k-1)!} \sum_{\sigma \in {\mathcal S}_{k-1}} Q \, y_{n-k+1}^{n-k+2} 
M_{m,n,k}(x_1,\ldots,x_n) 
\sgn \sigma \cdot \sigma \left[ \prod_{i=n-k+2}^{n} \frac{y_i^{i+1}}{1-\prod_{j=i}^{n} y_j} \right]\\
 + \frac{1}{k!} \sum_{\sigma \in {\mathcal S}_{k}} M_{m,n,k}(x_1,\ldots,x_n) \sgn \sigma \cdot \sigma \left[ \prod_{i=n-k+1}^{n} \frac{y_i^{i+2}}{1-\prod_{j=i}^{n} y_j} \right].
\end{multline}

We need the following lemma, which can be found in 
\cite[Example 4, Chapter III.5]{Macdonald}. It appeared in a similar context in  \cite[Subsublemma 1.1.3]{ZeilbergerASMProof}. 

\begin{lem} 
\label{zeilberger}
Let $r \ge 1$ be an integer. Then 
$$
\sum_{\sigma \in  {{\mathcal S}_r}} \sgn \sigma \cdot \sigma \left[ \prod\limits_{i=1}^{r} 
\frac{y_i^{i-1}}{1 - \prod_{j=i}^{r} y_j} \right] = \prod\limits_{i=1}^{r} (1-y_i)^{-1} 
\prod_{1 \le i < j \le r} \frac{y_j-y_i}{1-y_i y_j}.
$$
\end{lem}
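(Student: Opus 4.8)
The plan is to argue by induction on $r$. Write $F_r(y_1,\dots,y_r)=\prod_{i=1}^r \frac{y_i^{i-1}}{1-\prod_{j=i}^r y_j}$ for the summand and $\Lambda_r=\sum_{\sigma\in{\mathcal S}_r}\sgn\sigma\cdot\sigma[F_r]$ for the left-hand side, and denote by $R_r=\prod_{i=1}^r(1-y_i)^{-1}\prod_{1\le i<j\le r}\frac{y_j-y_i}{1-y_iy_j}$ the claimed right-hand side. The base case $r=1$ is immediate. Since both $\Lambda_r$ and $R_r$ are rational functions, it suffices to establish the identity for pairwise distinct $y_i$.

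The key structural observation is that the $i=1$ factor of $F_r$ is $\frac{1}{1-y_1\cdots y_r}$, whose denominator is \emph{symmetric} and may therefore be pulled out of the antisymmetrizer. Writing $G(y_2,\dots,y_r)=\prod_{i=2}^r\frac{y_i^{i-1}}{1-\prod_{j=i}^r y_j}=(y_2\cdots y_r)\,F_{r-1}(y_2,\dots,y_r)$ one obtains
$$\Lambda_r=\frac{1}{1-y_1\cdots y_r}\sum_{\sigma\in{\mathcal S}_r}\sgn\sigma\cdot\sigma[G].$$
Since $G$ does not involve $y_1$, grouping the permutations by the value $a=\sigma(1)$ is exactly a cofactor expansion: $\sum_\sigma\sgn\sigma\cdot\sigma[G]=\sum_{a=1}^r(-1)^{a-1}\Sigma_a$, where $\Sigma_a$ is the antisymmetrization of $G$ over the $r-1$ variables $\{y_i:i\ne a\}$, taken in increasing order of index. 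The symmetric factor $\prod_{i\ne a}y_i$ pulls out of $\Sigma_a$, and applying the induction hypothesis to $F_{r-1}$ gives
$$\Sigma_a=\Big(\prod_{i\ne a}y_i\Big)\prod_{i\ne a}\frac{1}{1-y_i}\prod_{\substack{i<j\\ i,j\ne a}}\frac{y_j-y_i}{1-y_iy_j}.$$

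Dividing the desired equality $\Lambda_r=R_r$ by $R_r$ and cancelling the factors common to $\Sigma_a$ and $R_r$ reduces everything to the scalar identity
$$\sum_{a=1}^r\Big(\prod_{i\ne a}y_i\Big)(1-y_a)\frac{\prod_{i\ne a}(1-y_iy_a)}{\prod_{i\ne a}(y_a-y_i)}=(-1)^{r-1}\big(1-y_1\cdots y_r\big).$$
This is where the real work lies, and I would settle it by residues: the $a$-th summand is exactly $\operatorname{Res}_{t=y_a}\Psi(t)$ for the rational function $\Psi(t)=\frac{y_1\cdots y_r}{t(1+t)}\cdot\frac{\prod_{i=1}^r(1-y_it)}{\prod_{i=1}^r(t-y_i)}$. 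Since $\Psi(t)=O(t^{-2})$ as $t\to\infty$, the sum of all its residues (over $t=y_1,\dots,y_r,0,-1$ and $\infty$) vanishes, so the left-hand side equals $-\operatorname{Res}_{t=0}\Psi-\operatorname{Res}_{t=-1}\Psi$; these two residues are elementary and evaluate to the constant on the right. Feeding this back gives $\Lambda_r=R_r$, completing the induction.

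The main obstacle is the scalar sum of the previous paragraph: expanded directly it looks like a messy Lagrange-interpolation sum whose summands depend on $a$ through the entire index set $\{i\ne a\}$, so the clean way to evaluate it is the residue computation above, together with the degree bound at infinity that kills $\operatorname{Res}_\infty$. The only point requiring care is the bookkeeping of signs—both the $(-1)^{a-1}$ from the cofactor expansion and the $(-1)^{r-a}$ produced by reorienting the Vandermonde factors $y_j-y_a$ for $j>a$—but these combine into a single global $(-1)^{r-1}$ and cause no further trouble.
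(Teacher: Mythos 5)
Your proof is correct; I verified both halves. The structural part is sound: the factor $(1-y_1\cdots y_r)^{-1}$ is symmetric and pulls out of the antisymmetrizer, the grouping by $a=\sigma(1)$ carries exactly the Laplace-expansion sign $(-1)^{a-1}$, the symmetric prefactor $\prod_{i\ne a}y_i$ pulls out of $\Sigma_a$, and dividing through by $R_r$ does reduce the induction step to your scalar identity, the signs $(-1)^{a-1}$ and $(-1)^{r-a}$ combining into the global $(-1)^{r-1}$ as you claim. The residue computation also checks out: using $\prod_{i=1}^r(1-y_iy_a)=(1-y_a^2)\prod_{i\ne a}(1-y_iy_a)$ and $(1-y_a^2)/(1+y_a)=1-y_a$, the $a$-th summand is indeed $\operatorname{Res}_{t=y_a}\Psi(t)$; the denominator of $\Psi$ exceeds the numerator in degree by two, so the residue at infinity vanishes; and $\operatorname{Res}_{t=0}\Psi=(-1)^r$, $\operatorname{Res}_{t=-1}\Psi=(-1)^{r+1}y_1\cdots y_r$, which gives precisely $(-1)^{r-1}(1-y_1\cdots y_r)$ (I confirmed the case $r=2$ directly as well). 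The only point you could state more explicitly is genericity: the residue argument needs $y_1,\ldots,y_r,0,-1$ pairwise distinct so that all poles of $\Psi$ are simple and disjoint, not merely the $y_i$ distinct from each other; since both sides are rational functions this is harmless, but it belongs in the sentence where you invoke distinctness. As for comparison with the paper: there is nothing to compare, because the paper does not prove the lemma --- it cites \cite{Macdonald} (Example 4, Chapter III.5, where the identity arises in the Hall--Littlewood theory) and notes its appearance as Subsublemma 1.1.3 of \cite{ZeilbergerASMProof}. Your induction-plus-residues argument therefore supplies a self-contained elementary proof where the paper offers only references, and the contour-integral evaluation is a clean way to handle the Lagrange-type sum, whose summands depend on $a$ through the whole index set and which would be unpleasant to verify by direct manipulation.
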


From the lemma it now follows that the expression in \eqref{magog3} is equal to 
\begin{multline*}
\frac{Q \cdot y_{n-k+1}^{n-k+2} M_{m,n,k}(x_1,\ldots,x_n) \prod_{i=n-k+2}^{n} y_i^{n-k+3} (1-y_i)^{-1} }{(k-1)!}  \prod_{n-k+2 \le i < j \le n} \frac{y_j-y_i}{1-y_i y_j} \\ 
+ \frac{ M_{m,n,k}(x_1,\ldots,x_n) \prod_{i=n-k+1}^{n} y_i^{n-k+3} (1-y_i)^{-1} }{k!} \prod_{n-k+1 \le i < j \le n} \frac{y_j-y_i}{1-y_i y_j}.
\end{multline*}
This can also be written as
\begin{multline*}
\frac{Q \cdot y_{n-k+1}^{n-k+2} M_{m,n,k}(x_1,\ldots,x_n) \prod_{i=n-k+2}^{n} (1-y_i)^{-1}}{(k-1)!}  \prod_{n-k+2 \le i < j \le n} \frac{1}{1-y_i y_j} \sum_{\sigma \in {\mathcal S}_{k-1}} \sgn \sigma \, 
\sigma \left[ \prod_{i=n-k+2}^{n} y_i^{i+1} \right] \\ 
+ \frac{ M_{m,n,k}(x_1,\ldots,x_n) \prod_{i=n-k+1}^{n} (1-y_i)^{-1} }{k!} \prod_{n-k+1 \le i < j \le n} \frac{1}{1-y_i y_j} \sum_{\sigma \in {\mathcal S}_k} \sgn \sigma \, 
\sigma \left[ \prod_{i=n-k+1}^{n} y_i^{i+2} \right].
\end{multline*}
We again employ the antisymmetry of $M_{m,n,k}(x_1,\ldots,x_n)$ to see that the constant term of the previous expression this is equal to the constant term of
\begin{multline*}
Q \cdot y_{n-k+1}^{n-k+2} M_{m,n,k}(x_1,\ldots,x_n) \prod\limits_{i=n-k+2}^{n} y_{i}^{i+1} (1-y_i)^{-1}  \prod_{n-k+2 \le i < j \le n} \frac{1}{1-y_i y_j} \\ 
+ M_{m,n,k}(x_1,\ldots,x_n) \prod\limits_{i=n-k+1}^{n} y_i^{i+2} (1-y_i)^{-1} \prod\limits_{n-k+1 \le i < j \le n} \frac{1}{1-y_i y_j} \\
= (-1)^{\binom{n+3}{2} +\binom{n-k+3}{2}} M_{m,n,k}(x_1,\ldots,x_n) \prod_{i=n-k+1}^{n} \frac{x_i^{i+2}}{1+x_i} \prod_{n-k+1 \le i < j \le n} \frac{1}{1-x_i x_j} \\
\times \left((-1)^{k} Q (1+x_{n-k+1}^{-1}) \prod_{j=n-k+2}^{n} (x_j^{-1}-x_{n-k+1}) +1 \right).
\end{multline*}

We summarize our result in the following theorem. 

\begin{theo}
\label{ver1} 
The generating function of $(m,n,k)$-Magog trapezoids w.r.t. the weight
$$
P^{\# \text{of maxima}} \, Q^{\# \text{of minima}}
$$
is the constant term in $x_1,\ldots,x_n$ of the following expression. 
\begin{multline*}
(-1)^{(m-1)n +\binom{n+1}{2}}
\left((-1)^k Q (1+x_{n-k+1}^{-1}) \prod_{j=n-k+2}^{n} (x_j^{-1}-x_{n-k+1}) +1 \right)
\\ \times 
\prod_{i=1}^{n-k} (1+x_i)^{-i-k} x_i^{-m-2n+i+1} 
\left( x_i -P (1+x_i) \right) 
\left( x_i -Q (1+x_i) \right)  
\\
\times
\prod_{i=n-k+1}^{n} (1+x_i)^{-n-1} x_i^{-m-2n+i+2} 
\left( x_i -P (1+x_i) \right)
\\
\times
\prod_{1 \le i < j \le n} (x_i-x_j)(x_i+x_j+x_i x_j) \prod_{n-k+1 \le i < j \le n} \frac{1}{1-x_i x_j}
\end{multline*}
\end{theo}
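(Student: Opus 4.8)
The plan is to package the computation carried out immediately above into a proof, the essential inputs being the fixed-bottom-row constant-term formula \eqref{magog1}, the antisymmetry of $M_{m,n,k}(x_1,\ldots,x_n)$ in the variables $x_{n-k+1},\ldots,x_n$, and Lemma~\ref{zeilberger}. First I would start from the observation that, writing $y_i=-x_i$, the generating function of $(m,n,k)$-Magog trapezoids with a \emph{fixed} bottom row $b_{n-k+1},\ldots,b_n$ and weight $P^{\#\text{maxima}}Q^{\#\text{minima not including }b_{n-k+1}}$ is the constant term of $\prod_{i=n-k+1}^{n}(-x_i)^{b_i+i}\,M_{m,n,k}(x_1,\ldots,x_n)$. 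Hence summing over all admissible bottom rows reduces the theorem to a purely formal summation of monomials in $y_{n-k+1},\ldots,y_n$ against the antisymmetric kernel $M_{m,n,k}$.

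The key step is to carry out this summation while correctly recovering the one minimum that the fixed-bottom-row weight omits, namely the minimum coming from $b_{n-k+1}=1$. I would split the sum over bottom rows into the two ranges $b_{n-k+1}=1$ (contributing an extra factor $Q$) and $b_{n-k+1}\ge 2$, as in \eqref{magog2}; each range is a nested geometric series in the $y_i$ and evaluates to the rational prefactor $\bigl(Q+\tfrac{\prod y_i}{1-\prod y_i}\bigr)\tfrac{\prod y_i^{i+1}}{\prod_{i=n-k+2}^{n}(1-\prod_{j\ge i}y_j)}$ displayed there. At this stage the whole expression is $M_{m,n,k}$ times a ratio of products $1-\prod_{j\ge i}y_j$, which is not yet symmetric.

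Next I would symmetrize. Because $M_{m,n,k}$ is antisymmetric in $x_{n-k+1},\ldots,x_n$ and the constant-term functional is invariant under permuting those variables, the constant term of $M_{m,n,k}$ times any function $g$ of those variables equals the constant term of $M_{m,n,k}$ times $\tfrac{1}{k!}\sum_\sigma\sgn\sigma\cdot\sigma[g]$ (and similarly with $(k-1)!$ for the $Q$-term, which only involves $x_{n-k+2},\ldots,x_n$). This rewrites the two pieces as the antisymmetrized sums in \eqref{magog3}. Then Lemma~\ref{zeilberger} evaluates each antisymmetrization of $\prod y_i^{i-1}/(1-\prod_{j\ge i}y_j)$ in closed form as $\prod(1-y_i)^{-1}\prod_{i<j}\tfrac{y_j-y_i}{1-y_iy_j}$. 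The Vandermonde factor $\prod_{i<j}(y_j-y_i)$ produced here can be re-expressed as an antisymmetrizer of a single monomial and then re-absorbed, again by the antisymmetry of $M_{m,n,k}$, so that the symmetrized sum collapses back to one monomial term times $M_{m,n,k}$.

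Finally I would collect the surviving powers of $x_i$ and $(1+x_i)$, the factors $1-x_ix_j$, and the accumulated signs, substituting the definition of $M_{m,n,k}$ to reach the displayed expression. The main obstacle I anticipate is neither conceptual nor the invocation of Lemma~\ref{zeilberger}, but rather the careful bookkeeping: tracking the exponent shifts $b_i\mapsto b_i+i$, handling the interaction of the $b_{n-k+1}=1$ boundary case with the minima count, and reconciling the various sign powers ($(-1)^{(m-1)n+\binom{n+1}{2}}$, the $\binom{n-k+1}{2}$ hidden in $M_{m,n,k}$, the $\binom{n+3}{2}+\binom{n-k+3}{2}$ arising from the Vandermonde reabsorption, and the factor $(-1)^k$) so that they combine into the single prefactor and the bracketed term $\bigl((-1)^kQ(1+x_{n-k+1}^{-1})\prod_{j=n-k+2}^{n}(x_j^{-1}-x_{n-k+1})+1\bigr)$ asserted in the theorem.
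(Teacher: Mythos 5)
Your proposal is correct and follows essentially the same route as the paper's own derivation: starting from the fixed-bottom-row constant term \eqref{magog1} written as $\prod_{i=n-k+1}^{n}(-x_i)^{b_i+i}M_{m,n,k}$, splitting the sum over bottom rows according to whether $b_{n-k+1}=1$ to recover the omitted minimum as in \eqref{magog2}, symmetrizing via the antisymmetry of $M_{m,n,k}$ to reach \eqref{magog3}, evaluating with Lemma~\ref{zeilberger}, and reabsorbing the resulting Vandermonde factor before collecting signs. No substantive deviation or gap; the bookkeeping concerns you flag are exactly the points the paper works through.
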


\subsection{Second version}

There are other options to encode $(m,n,k)$-Magog trapezoids as families of non-intersecting lattice paths, see for instance \cite{KrattGogMagog}. We derive the constant term expression for a second possibility in this subsection, following an encoding that was used for instance in \cite{FonZinn}. Instead of interpreting NE-diagonals as lattice paths, we now consider the lattice paths that separate the entries that are less than or equal to $i$ in the Magog trapezoid from the entries that are greater than or equal to $i+1$, $1 \le i \le m+n-1$, see Figure~\ref{magog_ex}. By rotating the picture and shifting the paths appropriately, these paths can be transformed into a family of non-intersecting lattice paths, see 
Figure~\ref{nilp2} (the leftmost separating path in Figure~\ref{magog_ex}, i.e.\ the one separating the $1$'s from $2$'s, corresponds to the bottom path in Figure~\ref{nilp2}).

\begin{figure}
\scalebox{0.6}{
\psfrag{1}{\large $1$}
\psfrag{2}{\large$2$}
\psfrag{3}{\large$3$}
\psfrag{4}{\large$4$}
\psfrag{5}{\large$5$}
\psfrag{6}{\large$6$}
\psfrag{7}{\large $7$}
\psfrag{8}{\large $8$}
\psfrag{9}{\large $9$}
\psfrag{10}{\large $10$}
\psfrag{11}{\large $11$}
\psfrag{12}{\large $12$}
\psfrag{13}{\large $13$}
\psfrag{14}{\large $14$}
\psfrag{15}{\large $15$}
\psfrag{0}{\large $(0,0)$}
\includegraphics{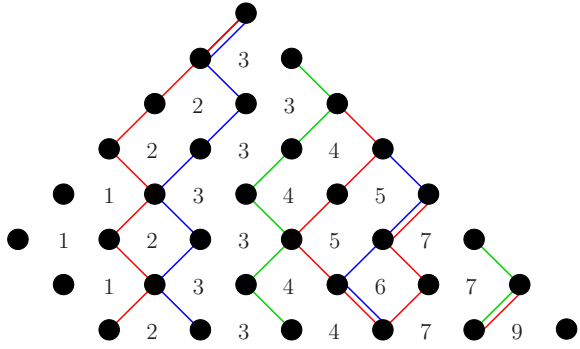}}\caption{\label{magog_ex} An $(2,7,5)$-Magog trapezoids and its separating lattice paths}\end{figure}

\begin{figure}
\scalebox{0.8}{
\psfrag{1}{\large $1$}
\psfrag{2}{\large$2$}
\psfrag{3}{\large$3$}
\psfrag{4}{\large$4$}
\psfrag{5}{\large$5$}
\psfrag{6}{\large$6$}
\psfrag{7}{\large $7$}
\psfrag{8}{\large $8$}
\psfrag{9}{\large $9$}
\psfrag{10}{\large $10$}
\psfrag{11}{\large $11$}
\psfrag{12}{\large $12$}
\psfrag{13}{\large $13$}
\psfrag{14}{\large $14$}
\psfrag{15}{\large $15$}
\psfrag{0}{\large $(0,0)$}
\includegraphics{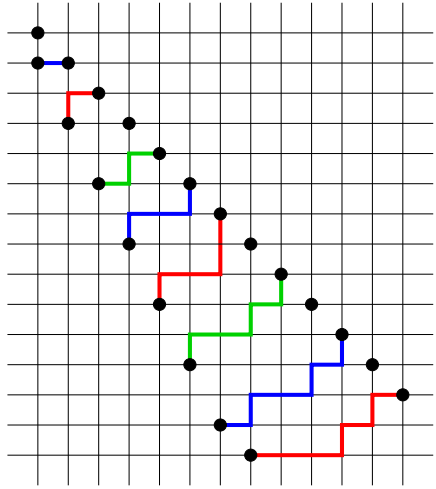}}\caption{\label{nilp2} Non-intersecting lattice paths for the second version}\end{figure}

It can be worked out that $(m,n,k)$-Magog trapezoids correspond to families of non-intersecting lattice paths with starting points  $(-i+1,i-1)$, $i=1,\ldots,m-1$, and $(-i+1,2i-m-1)$, $m \le i \le m+n-1$, and end points 
$(k-b_j+1,n-k+b_j-1)$, $1 \le j \le m+n-1$, for some 
$1 \le b_1 < b_2 < \ldots < b_{m+n-1} \le m+n+k-1$, 
with north steps and east steps. The maximums correspond to the lattice paths with starting points $(-i+1,2i-m-1)$, $m \le i \le m+n-1$, such that the first step is horizontal, while the number of minima is just the height of the last horizontal step of the path ending at $(k-b_1+1,n-k+b_1-1)$ if $b_1=1$, otherwise it is $n-k+1$. 
It follows that the number of
$(m,n,k)$-Magog trapezoids with $q$ minima is equal to the number of the 
following family of lattice paths depending on whether or not $q=n-k+1$.

\smallskip

\emph{Case~$q \le n-k$:}
In this case, we need to count families of non-intersecting lattice paths starting at $(-i+1,i-1)$, $i=1,\ldots,m-1$, 
and $(-i+1,2i-m-1)$, $m \le i \le m+n-1$, and with ending points 
$(k-1,q)$ and $(k-b_j+1,n-k+b_j-1)$, $2 \le j \le m+n-1$, for some 
$2 \le  b_2 < \ldots < b_{m+m-1} \le m+n+k-1$. 

\smallskip

\emph{Case~$q=n-k+1$:} 
Here, we need to count families of non-intersecting lattice paths starting at $(-i+1,i-1)$, $i=1,\ldots,m-1$, 
and $(-i+1,2i-m-1)$, $m \le i \le m+n-1$, and with ending points $(k-b_j+1,n-k+b_j-1)$, $1 \le j \le m+n-1$, for some 
$2 \le  b_1 < \ldots < b_{m+m-1} \le m+n+k-1$. 

\smallskip

First we assume $q \le n-k$.

The generating function of lattice paths starting at $(a,b)$ and ending at 
$(c,d)$ with respect to the weight 
$$P^{[\text{first step is horizontal}]}$$
allowing steps in north direction and east direction is 
\begin{multline*}
P \cdot {\mathcal N}((a+1,b) \to (c,d)) + {\mathcal N}((a,b+1) \to (c,d)) \\
= P \binom{c+d-a-b-1}{c-a-1} + \binom{c+d-a-b-1}{c-a}.
\end{multline*}
The formula is true whenever $c+d-a-b > 0$. It is also true when $c+d-a-b=0$ and $c-a=0$. If $c+d-a-b=0$ and $c -a \not=0$ or 
$c+d-a-b < 0$, then the generating function vanishes.
It follows that the generating function of $(m,n,k)$-Magog trapezoids 
with $q$ minima and w.r.t. the number of maxima is 
$$ 
\det_{1 \le i, j \le m+n-1}  
\left( \begin{array}{c|c}
\binom{k+q-1}{k+i-2}  &  \binom{n}{i+k-b_j}  \\ \hline 
\begin{cases} P \binom{k+m+q-i-2}{k+i-3} + \binom{k+m+q-i-2}{k+i-2},  & i \le k+m+q-2 \\ 
0, & i \ge k+m+q-1 \end{cases}  & P \binom{m+n-i-1}{i+k-b_j-1} + \binom{m+n-i-1}{i+k-b_j}   
\end{array} \right), 
$$
where we need to distinguish between $1 \le i \le m-1$ and $m \le i \le m+n-1$ concerning the row, 
and between $j=1$ and $2 \le j \le m+n-1$ concerning the columns, and we have to exclude the case when $k=1$ and $m+q=1$.
This is equal to the constant term of 
$$ 
\det_{1 \le i, j \le m+n-1}  
\left( \begin{array}{c|c}
\frac{(1+x_i)^{k+q-1}}{x_i^{i+k-3}}  &  \frac{(1+x_i)^{n}}{x_i^{i+k-b_j}}  \\ \hline 
\begin{cases} P \frac{(1+x_i)^{k+m+q-i-2}}{x_i^{i+k-3}} + \frac{(1+x_i)^{k+m+q-i-2}}{x_i^{i+k-2}}   & i \le k+m+q-2 \\
0 & i \ge k+m+q-1 \end{cases} & P \frac{(1+x_i)^{m+n-i-1}}{x_i^{i+k-b_j-1}} + \frac{(1+x_i)^{m+n-i-1}}{x_i^{i+k-b_j}}   
\end{array} \right).
$$
We pull out the factor 
$$
P^{[m=0]} \prod_{i=1}^{m+n-1} (1+x_i)^n x_i^{-i-k} \prod_{i=\max(1,m)}^{m+n-1} (1+x_i)^{m-i-1} \left(P x_{i} +1\right),
$$
and we obtain the following determinant
$$
\det_{1 \le i, j \le m+n-1}  
\left( \begin{array}{c|c}
\begin{cases}  (1+x_i)^{k+q-n-1} x_{i}^2, & i \le k+m+q-2 \\ 0, & i \ge k+m+q-1 \end{cases}   & x_i^{b_j}  
\end{array} \right),
$$
where we now only have to distinguish between $j=1$ and $2 \le j \le m+n-1$ concerning the columns, while we have a homogeneous definition in the rows. We expand w.r.t.\ the first column and obtain 
\begin{equation}
\label{partial}
\sum_{l=1}^{k+m+q-2} (-1)^{l+1} (1+x_l)^{k+q-n-1} x_l^2 \det_{2 \le j \le m+n-1 \atop 1 \le i \le m+n-1, i\not=l} \left( x_i^{b_j} \right).
\end{equation}
Now we aim to sum over all $2 \le b_2 < \ldots < b_{m+n-1} \le m+n+k-1$. However, since the constant term is zero if $b_{m+n-1} \ge m+n+k$, we can simply sum over all $2 \le b_2 < \ldots < b_{m+n-1}$. Observe that by Lemma~\ref{zeilberger} we get
\begin{multline}
\label{zeilberger2}
\sum_{b \le b_1 < b_2 < \ldots < b_r} \det_{1 \le i,j \le r} 
\left( x_i^{b_j} \right) = \sum_{b \le b_1 < b_2 < \ldots < b_r} 
\sum_{\sigma \in {\mathcal S}_r} \sgn \sigma \cdot \sigma 
\left[ x_1^{b_1} \cdots x_r^{b_r} \right] \\
= \prod_{i=1}^{l} x_i^{b}
\sum_{\sigma \in {\mathcal S}_r} \sgn \sigma \cdot \sigma 
\left[ \sum_{0 \ge b_1 < b_2 < \ldots < b_r} x_1^{b_1} \cdots x_r^{b_r} \right] =
\prod_{i=1}^{r} x_i^{b} \sum_{\sigma \in {\mathcal S}_r} \sgn \sigma 
\cdot \sigma \left[ \prod_{i=1}^{r} \frac{x_i^{i-1}}{ 
\left( 1 - \prod_{j=i}^{r} x_j \right)} \right] \\
= \prod_{i=1}^{r} x_i^{b} (1-x_i)^{-1} \prod_{1 \le i < j \le r} 
\frac{x_j-x_i}{1-x_i x_j}.
\end{multline} 
We sum \eqref{partial} over all $b_2,\ldots,b_{m+n-1}$ with 
$2 \le b_2 \ldots < b_{m+n-1}$ and obtain
$$
\sum_{l=1}^{k+m+q-2} (-1)^{l+1} (1+x_l)^{k+q-n-1} x_l^2 
\prod_{1 \le i \le m+n-1 \atop i \not= l}  x_i^{2} (1-x_i)^{-1}  \prod_{1 \le i < j \le m+n-1 \atop i,j \not= l} 
\frac{x_j-x_i}{1-x_i x_j}
$$
It follows that the generating function of $(m,n,k)$-Magog trapezoids with 
$q$ Minima w.r.t.\ the weight $P^{\# \text{ of maxima}}$ is the constant term of
\begin{multline*}
P^{[m=0]} \prod_{i=1}^{m+n-1} (1+x_i)^n x_i^{-i-k} \prod_{i=\max(m,1)}^{m+n-1} (1+x_i)^{m-i-1} \left(P x_{i}+1\right) \\
\times \sum_{l=1}^{k+m+q-2} (-1)^{l+1} (1+x_l)^{k+q-n-1} x_l^{2}
\prod_{1 \le i \le m+n-1 \atop i \not= l}^{m+n-1} x_i^{2} (1-x_i)^{-1}  \prod_{1 \le i < j \le m+n-1 \atop i,j \not= l} 
\frac{x_j-x_i}{1-x_i x_j}
\end{multline*}
if we assume $q \le n-k$.  

In case $q=n-k+1$, the number of Magog trapezoids is the constant term of 
$$ 
\det_{1 \le i, j \le m+n-1}  
\left( 
\begin{cases}
\frac{(1+x_i)^{n}}{x_i^{i+k-b_j}},  & i=1,\ldots,m-1 \\
P \frac{(1+x_i)^{m+n-i-1}}{x_i^{i+k-b_j-1}} + \frac{(1+x_i)^{m+n-i-1}}{x_i^{i+k-b_j}},   & i=m,\ldots,m+n-1
\end{cases}
\right).
$$
Again we pull out the factor 
$$
P^{[m=0]} \prod_{i=1}^{m+n-1}(1+x_i)^n x_i^{-i-k} \prod_{i=\max(m,1)}^{m+n-1} (1+x_i)^{m-i-1} \left(P x_{i} +1\right),
$$
and, in this case, we obtain the following simple determinant.
$$
\det_{1 \le i, j \le m+n-1}  
\left( x_i^{b_j} \right),
$$
Finally, we sum over all $2 \le b_1 < b_2 < \ldots < b_{m+n-1}$, and, by \eqref{zeilberger2}, we obtain 
$$
P^{[m=0]} \prod_{i=1}^{m+n-1} (1+x_i)^n x_i^{-i-k+2}(1-x_i)^{-1} \prod_{i=\max(m,1)}^{m+n-1} (1+x_i)^{m-i-1} \left(P x_{i} +1\right)
\prod_{1 \le i < j \le m+n-1} 
\frac{x_j-x_i}{1-x_i x_j}.
$$

We summarize our results in the following theorem.

\begin{theo}
\label{ver2} 
Suppose $k \not=1$ or $m+q \not=1$.
The generating function of $(m,n,k)$-Magog trapezoids w.r.t. the weight 
$P^{\# \text{ of maxima}}$ and with $q$ minima is equal to the  constant term of 
\begin{multline*}
 P^{[m=0]} \prod_{i=1}^{m+n-1} (1+x_i)^n x_i^{-i-k+2} (1-x_i)^{-1} \prod_{i=\max(m,1)}^{m+n-1} (1+x_i)^{m-i-1} \left(P x_{i} +1\right) \prod\limits_{1 \le i < j \le m+n-1} 
\frac{x_j-x_i}{1-x_i x_j}  \\
\times 
\begin{cases} 
\sum\limits_{i=1}^{k+m+q-2}  (1+x_i)^{k+q-n-1} (1 -x_i) 
\prod\limits_{1 \le j \le m+n-1 \atop j \not=i} 
\frac{1-x_i x_j}{x_j-x_i}, & \text{if $q \le n-k$} \\ 
\qquad \qquad 1, & \text{if $q=n-k+1$.} 
\end{cases}
\end{multline*}
\end{theo}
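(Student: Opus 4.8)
The plan is to realize $(m,n,k)$-Magog trapezoids through the ``separating path'' encoding described just above (following \cite{FonZinn} and illustrated in Figures~\ref{magog_ex} and~\ref{nilp2}), so that counting trapezoids becomes counting families of non-intersecting lattice paths with the prescribed starting points $(-i+1,i-1)$ for $1\le i\le m-1$ and $(-i+1,2i-m-1)$ for $m\le i\le m+n-1$, and with variable end points $(k-b_j+1,n-k+b_j-1)$. The first task is to read off how the two statistics translate into the path picture: a maximum occurs exactly when one of the ``lower'' paths (those starting at $(-i+1,2i-m-1)$) begins with a horizontal step, while the number of minima equals the height of the last horizontal step of the path ending at the smallest label $b_1$ when $b_1=1$, and equals $n-k+1$ otherwise. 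This dichotomy is precisely what forces the case distinction $q\le n-k$ versus $q=n-k+1$ in the statement, the former case freezing the exceptional path to end at $(k-1,q)$, and the degenerate configuration $k=1$, $m+q=1$ is exactly the one that must be excluded because the path picture breaks down there.

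Next I would apply the Lindstr\"om--Gessel--Viennot theorem \cite{Lindstr,GesselViennot} to each of the two families. Recording the factor $P$ whenever a path's first step is horizontal gives matrix entries of the form $P\binom{\cdots}{\cdots}+\binom{\cdots}{\cdots}$, so the weighted count is a single $(m+n-1)\times(m+n-1)$ determinant; in the case $q\le n-k$ the column carrying the fixed end point $(k-1,q)$ is handled separately from the columns indexed by the free labels $2\le b_2<\dots<b_{m+n-1}$. I would then pass to a constant-term expression through the dictionary $\ct_{x}\frac{(1+x)^n}{x^k}=\binom{n}{k}$, which turns the determinant of binomial coefficients into the constant term of a determinant of Laurent monomials in $(1+x_i)$ and $x_i$.

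The computational heart is then routine. Pulling out of each row the factor $(1+x_i)^n x_i^{-i-k}$, together with $(1+x_i)^{m-i-1}(Px_i+1)$ in the lower rows and the scalar $P^{[m=0]}$, collapses the matrix to the shape $\bigl(x_i^{b_j}\bigr)$ in the case $q=n-k+1$, and to the same shape augmented by one exceptional column in the case $q\le n-k$. Summing over all admissible $2\le b_2<\dots<b_{m+n-1}$ is legitimate because the constant term annihilates the terms violating $b_{m+n-1}\le m+n+k-1$, so one may sum over all strictly increasing tuples, and this is exactly the evaluation \eqref{zeilberger2} built on Lemma~\ref{zeilberger}; it produces the factors $\prod(1-x_i)^{-1}\prod_{i<j}\frac{x_j-x_i}{1-x_ix_j}$ of the final formula. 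In the case $q\le n-k$ one first expands \eqref{partial} along the exceptional first column, which is precisely what generates the outer sum $\sum_{i=1}^{k+m+q-2}$ appearing in the statement.

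The main obstacle I anticipate is not the determinant evaluation but the bookkeeping at the boundary. Both the LGV positivity and the binomial-to-constant-term passage are valid only under the caveat recorded above, namely that the generating function for a path from $(a,b)$ to $(c,d)$ is the stated binomial expression when $c+d-a-b>0$, and also when $c+d-a-b=0$ with $c-a=0$, but vanishes otherwise; several matrix entries sit exactly on this boundary. Checking that these edge entries still yield the correct generating function, that the sign $(-1)$-free normalization emerges after the common factor is extracted, and that the two $q$-cases knit together with the excluded configuration $k\ne1$ or $m+q\ne1$, is the delicate part. Everything downstream of extracting the common factor reduces to a Vandermonde determinant and to \eqref{zeilberger2}, so once the boundary is controlled the proof is complete.
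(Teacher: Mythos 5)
Your proposal follows the paper's own proof essentially step for step: the same separating-path encoding from \cite{FonZinn}, the same translation of maxima and minima into first-step and last-horizontal-step statistics with the case split $q\le n-k$ versus $q=n-k+1$, the same Lindstr\"om--Gessel--Viennot determinant weighted by $P$, the dictionary $\ct_x \frac{(1+x)^n}{x^k}=\binom{n}{k}$, extraction of the common row factor $P^{[m=0]}\prod_i (1+x_i)^n x_i^{-i-k}\prod_i(1+x_i)^{m-i-1}(Px_i+1)$, expansion along the exceptional column as in \eqref{partial}, and summation over the free endpoints via \eqref{zeilberger2}, which rests on Lemma~\ref{zeilberger}. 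The boundary bookkeeping you single out as the delicate part, including the excluded configuration $k=1$, $m+q=1$, is precisely where the paper also inserts its case-by-case verifications, so nothing essential is missing.
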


\section{Connecting Gogs and Magogs}
\label{connect}

In summary, we have seen that the constant term expressions for the number of \emph{Gog-type objects} are all of the following form
$$
s \cdot \prod_{i=1}^{n} f_i(x_i) \prod_{1 \le i < j \le n} (x_i-x_j)(1+x_j+x_i x_j),
$$
where $f_i(x)$ are certain (simple) rational function---often of the form 
$$f_i(x)=x^{a_i}(1+x)^{b_i},$$ 
$a_i,b_i$ integers---and $s$ is a sign. These functions usually have (somewhat) homogeneous definitions for certain ranges of $i$: for instance, when considering $(m,n,k)$-Gog trapezoids, the definition is homogeneous for $1 \le i \le k$ and also for $k+1 \le i \le n$, see Theorem~\ref{gogfirst}. In this case, a factor $1+x$ in $f_i(x)$ is replaced by $1+Q x_i$ for $k+1 \le i \le n$ when considering the number of maxima, and $f_1(x)$ has an exceptional definition when considering the number of minima.
When considering the generating function w.r.t.\ the two inversion numbers, the term
$1+x_j+x_i x_j$ is replaced by $1+(1-v) x_i + u (x_j + x_i x_j)$ and some $f_i(x)$ have an additional factor $\frac{1}{(1+(1-v)x)^{c_i}}$, while others have an additional factor $\frac{1}{(1+u x)^{d_i}}$, $c_i,d_i$ non-negative integers.

Concerning constant term expressions for the number of 
\emph{Magog-type objects} we need to distinguish between the two versions. In the first case (Theorem~\ref{ver1}), we have a sum of two expressions of the form
$$
s \cdot \prod_{i=1}^{n} f_i(x_i) \prod_{1 \le i < j \le n} (x_i-x_j)(x_i+x_j+x_i x_j)
\prod_{n-k+1 \le i < j \le n} \frac{1}{1-x_i x_j},
$$ 
while in the second case (Theorem~\ref{ver2}), we have an expression of the form 
$$
s \cdot \prod_{i=1}^{n} f_i(x_i) \prod_{1 \le i < j \le m+n-1} \frac{x_j-x_i}{1-x_i x_j}
$$
if $q=n-k+1$ and a slightly more complicated expression otherwise.

In \cite{FonZinn} it was shown that the number of $n \times n$ ASMs (which is the same as the number of $(n,n,n)$-Gog trapezoids) is the constant term of the following expression
\begin{equation}
\label{gog-zinnjustin}
\prod_{i=1}^{n-1} (1+x_i)^2 x_i^{-2i+1} \prod_{1 \le i < j \le n-1} (x_j-x_i)(1+x_j+x_i x_j).
\end{equation}
This is a Gog-type constant term expression as described above. Interestingly, this formula was derived using the six-vertex model approach, which is different from the approach that was used here. On the other hand, it was shown that the number of TSSCPPs in an $2n \times 2n \times 2n$ box (which is the same as the number of $(n,n,n)$-Magogs) is the constant term of 
\begin{equation}
\label{magog-zinnjustin}
\prod_{i=1}^{n} (1+x_i)^i x_i^{-2i+2} (1-x_i^2)^{-1} \prod_{1 \le i < j \le n}
 \frac{x_j-x_i}{1-x_i x_j}, 
 \end{equation}
and this is a Magog-type constant term expression. In \cite{FonZinn}, Fonseca and Zinn-Justin succeeded in showing that the constant terms of the two expressions are equal. It can essentially be deduced from the following 
theorem, which was conjectured by Di Francesco and Zinn-Justin \cite{qkz} and first proven by Zeilberger \cite{ZeilDiFrancesco}.
\begin{theo}
\label{zeil}
Let $S(x_1,\ldots,x_n)$ be a power series in $x_1,\ldots,x_n$ that is symmetric in $x_1,\ldots,x_n$. Then 
\begin{multline*}
\ct_{x_1,\ldots,x_n} S(x_1,\ldots,x_n) \prod_{i=1}^{n} x_i^{-2i+1} \prod_{1 \le i < j \le n} (x_j-x_i)(1+t x_j + x_i x_j) \\
= \ct_{x_1,\ldots,x_n} S(x_1,\ldots,x_n) \prod_{i=1}^{n} 
\frac{(1+t x_i)^{i-1} x_i^{-2i+1}}{1-x_i^2} \prod_{1 \le i < j \le n} 
\frac{x_j-x_i}{1-x_i x_j}.
\end{multline*}
\end{theo}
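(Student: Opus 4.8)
The plan is to reduce the asserted identity to a comparison of symmetrized kernels. Write $K_L$ and $K_R$ for the two Laurent-series kernels, so that the claim reads $\ct_{x_1,\ldots,x_n} S\,K_L = \ct_{x_1,\ldots,x_n} S\,K_R$. The first move uses symmetry of $S$: the constant-term functional is invariant under permuting variables, so for every $\sigma\in{\mathcal S}_n$ one has $\ct S\,K=\ct\,\sigma(S\,K)=\ct S\,\sigma(K)$, and averaging over ${\mathcal S}_n$ gives $\ct S\,K=\ct S\,\operatorname{Sym}(K)$ with $\operatorname{Sym}=\tfrac1{n!}\sum_\sigma\sigma$. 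Thus it suffices to compare $\operatorname{Sym}(K_L)$ and $\operatorname{Sym}(K_R)$. Because $S$ is a genuine power series (nonnegative exponents only) while $K_L,K_R$ are Laurent series, the pairing $\ct S\,(\,\cdot\,)$ only probes the coefficients supported on monomials all of whose exponents are $\le 0$. Hence I do not expect $\operatorname{Sym}(K_L)=\operatorname{Sym}(K_R)$ to hold on the nose (already for $n=1$ they differ in positive degrees), but only on this antidominant range, and establishing agreement there is exactly the goal.

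The second step exploits the common antisymmetric factor. Both kernels contain the Vandermonde $V(\mathbf{x})=\prod_{1\le i<j\le n}(x_j-x_i)$, and since $\sigma(V)=\sgn\sigma\,V$, symmetrizing $V\cdot G$ replaces $G$ by its antisymmetrization: $\operatorname{Sym}(V\,G)=\tfrac1{n!}V\cdot\asym(G)$ with $\asym(G)=\sum_\sigma\sgn\sigma\,\sigma(G)$. On the Magog side the residual factor carries the coupled denominator $\prod_{i<j}\frac{1}{1-x_ix_j}$, which is precisely the shape produced by Lemma~\ref{zeilberger}: reading that lemma backwards rewrites $V(\mathbf{x})\prod_{i<j}\frac{1}{1-x_ix_j}$ as $\prod_i(1-x_i)\,\asym\!\big[\prod_i\frac{x_i^{i-1}}{1-\prod_{j\ge i}x_j}\big]$, thereby decoupling the pairwise denominators into single-variable factors at the cost of an alternating sum over ${\mathcal S}_n$. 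Using $\frac{1-x_i}{1-x_i^2}=\frac{1}{1+x_i}$ then brings $K_R$ into the form $\big(\prod_i\frac{(1+tx_i)^{i-1}x_i^{-2i+1}}{1+x_i}\big)\cdot\asym\!\big[\prod_i\frac{x_i^{i-1}}{1-\prod_{j\ge i}x_j}\big]$, a product of an index-dependent (neither symmetric nor antisymmetric) prefactor with a fully decoupled antisymmetric sum.

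The Gog side is where the genuine difficulty lies: its residual factor is the pairwise product $\prod_{1\le i<j\le n}(1+tx_j+x_ix_j)$, which does \emph{not} split into single-variable pieces, so there is no direct analogue of Lemma~\ref{zeilberger} to invoke. The plan is to expand this product—equivalently, to write $V(\mathbf{x})\prod_{i<j}(1+tx_j+x_ix_j)$ as an alternating sum (a determinant) in the monomials $x_i^{a}(1+tx_i)^{b}$—and to reorganize the resulting antisymmetric sum so that, after restriction to the antidominant coefficients isolated in the first step, it matches term by term the antisymmetric sum obtained for $K_R$. Throughout, the index-dependent prefactors $(1+tx_i)^{i-1}$ and the very negative powers $x_i^{-2i+1}$ must be tracked carefully through the ${\mathcal S}_n$-sums, and one must verify that the two antisymmetric sums agree on all monomials whose exponents are simultaneously nonpositive, these being the only coefficients seen by the pairing against $S$. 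I expect this matching of the two alternating sums to be the main obstacle: it is the nontrivial combinatorial core of the Di Francesco--Zinn-Justin identity, and making it rigorous will most likely require either an induction on $n$ (peeling off $x_n$ via a single-variable constant-term computation while controlling the asymmetric exponents) or an explicit bijection between the monomials contributing on the two sides.
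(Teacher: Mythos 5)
Your opening reductions are sound as far as they go: since $S$ is symmetric, pairing with $\ct$ lets you replace each kernel by its symmetrization, and only the coefficients of monomials with all exponents nonpositive are probed; moreover, Lemma~\ref{zeilberger} read backwards does decouple the Magog kernel as you describe (though note that the prefactor $\prod_i (1+tx_i)^{i-1}(1+x_i)^{-1}x_i^{-2i+1}$ is neither symmetric nor antisymmetric, so it must be carried inside the ${\mathcal S}_n$-sum rather than standing outside the antisymmetrization, as your own phrasing half-acknowledges). The genuine gap is that your final step is not an argument: you propose to expand $\prod_{1\le i<j\le n}(x_j-x_i)(1+tx_j+x_ix_j)$ into an alternating sum and ``match it term by term'' against the decoupled Magog sum, and you yourself flag this matching as the main obstacle, to be handled ``most likely'' by an unspecified induction or bijection. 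That matching \emph{is} the theorem --- everything preceding it is routine bookkeeping --- and no mechanism is offered for why the two alternating sums should agree on the antidominant coefficients; a naive monomial-by-monomial correspondence is not to be expected, since agreement emerges only after substantial cancellation across the full ${\mathcal S}_n$-sums.

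For comparison: the paper does not reprove Theorem~\ref{zeil} (it cites Zeilberger's proof \cite{ZeilDiFrancesco}), but it records exactly the tool that closes the gap you left open, namely Theorem~\ref{general} of Fonseca and Zinn-Justin, which the paper notes ``can be used \dots also to prove Theorem~\ref{zeil}.'' One substitutes $w_i=(x_iq^{-1}-1)/(x_iq-1)$ with $t=-(q+q^{-1})$, so that $\prod_{i<j}(qw_i-q^{-1}w_j)$ becomes $\prod_{i<j}(1+tx_j+x_ix_j)$ up to single-variable factors, as in \eqref{transform}; Theorem~\ref{general} then evaluates the antisymmetrization of the coupled kernel in closed form as a single Cauchy-type determinant divided by $\prod_{i<j}h_1(y_i,y_j)(1-q^2w_iw_j)$. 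The identity $1-q^2w_iw_j=(q^2-1)(x_ix_j-1)/\bigl((x_iq-1)(x_jq-1)\bigr)$ is precisely what manufactures the Magog denominator $\prod_{i<j}(1-x_ix_j)^{-1}$, while the determinant (after sending the auxiliary variables $y_j$ to their special value, with limits handled as in \eqref{behrend} and \eqref{behrendrelative}) supplies the decoupled factors $(1+tx_i)^{i-1}(1-x_i^2)^{-1}$. Some such determinantal antisymmetrization identity --- provable by the kind of induction the paper carries out for its variant Theorem~\ref{generalilse} --- is the missing idea in your proposal; without it, or Zeilberger's original argument, your outline stops exactly where the theorem begins.
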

Fonseca and Zinn-Justin \cite{FonZinn} gave another proof of the fact that the constant term of \eqref{gog-zinnjustin} is equal to the constant term of \eqref{magog-zinnjustin}, which is discussed next.

The general strategy (which also appeared in \cite{ZeilbergerASMProof}) is to compute \emph{the symmetrizers} of \eqref{gog-zinnjustin} and of \eqref{magog-zinnjustin}. (The symmetrizer is defined as the antisymmetrizer except for the sign $\sgn \sigma$ which is omitted, see \eqref{antisym}.) Clearly, in order to show that the constant terms of the original expressions are equal, it suffices to show this for the symmetrized expressions.  

The computation of the symmetrizer is usually much easier for the Magog-type constant term expressions, as $\prod_{1 \le i < j \le n} \frac{x_j-x_i}{1-x_i x_j}$ is antisymmetric and therefore it suffices to compute the \emph{antisymmetrizer}
of $\prod\limits_{i=1}^{n} f_i(x_i)$ (with $f_i(x)= (1+x_i)^i x_i^{-2i+2} (1-x_i^2)^{-1}$ in this case), which can often be accomplished using the Vandermonde determinant evaluation.

As for the Gog-type constant term expressions, the following result of Fonseca and Zinn-Justin \cite{FonZinn} can be used to compute the symmetrizer of \eqref{gog-zinnjustin} (and also to prove Theorem~\ref{zeil}). The symmetrization of the Gog-type expression \eqref{gog-zinnjustin} causes the ``core'' 
$\prod_{1 \le i < j \le n-1} (x_j-x_i)(1+x_j+x_i x_j)$ to be transformed into 
$\prod_{1 \le i < j \le n} \frac{x_j-x_i}{1-x_i x_j}$ (which is the ``core'' of the Magog-type expression \eqref{gog-zinnjustin}), and so the expressions are very similar (but not equal) after symmetrization.\footnote{The following identity is often helpful for further manipulations (the proof is left to the reader as we will not use it here):
Suppose $P(x_1,\ldots,x_n), S(x_1,\ldots,x_n)$ are two formal power series 
and $S(x_1,\ldots,x_n)$ is symmetric. Then the constant term of 
$$
\frac{S(x_1,\ldots,x_n) P(x_1,\ldots,x_n)}{(x_1 \cdots x_n)^{n-1}} \prod_{1 \le i < j \le n} (x_j-x_i) 
$$
agrees with the constant term of 
$$
\frac{S(0,\ldots,0) P(x_1,\ldots,x_n)}{(x_1 \cdots x_n)^{n-1}} \prod_{1 \le i < j \le n} (x_j-x_i). 
$$}
 
\begin{theo} 
\label{general}
Let 
$h_q(w,y)=(q w - q^{-1} y)(q w y - q^{-1}).$
Then the antisymmetrizer of 
\begin{equation}
\label{expr}
\frac{\prod_{1 \le i < j \le n} (q w_i - q^{-1} w_j)}{\prod_{1 \le i \le j \le n} h_1(w_j,y_i) \prod_{1 \le j \le i \le n} h_q(w_j,y_i)}
\end{equation}
w.r.t.\ $w_1,\ldots,w_n$ is 
$$
\frac{q^{\binom{n}{2}} \det_{1 \le i,j \le n} \left( 
\frac{1}{h_1(w_i,y_j) h_q(w_i,y_j)} \right)}{\prod_{1 \le i < j \le n} h_1(y_i,y_j)(1-q^2 w_i w_j)}.
$$
\end{theo}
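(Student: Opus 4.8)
The plan is to prove the identity by induction on $n$, treating both sides as antisymmetric rational functions in $w_1,\ldots,w_n$ and matching their poles, residues and behaviour at infinity. First I would record some preliminary normalizations. Since $q w_i - q^{-1} w_j = q\,(w_i - q^{-2} w_j)$, the numerator of \eqref{expr} equals $q^{\binom{n}{2}} \prod_{1 \le i < j \le n} (w_i - q^{-2} w_j)$, which already accounts for the factor $q^{\binom{n}{2}}$ on the right-hand side. The denominator of \eqref{expr} factors as $\prod_{a=1}^{n} G_a(w_a)$ with $G_a(w) = \prod_{b \le a} h_1(w,y_b) \prod_{b \ge a} h_q(w,y_b)$, so that the summand is a deformed Vandermonde times a product of single-variable functions. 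Both sides of the asserted identity are then manifestly antisymmetric in $w_1,\ldots,w_n$: the left-hand side by construction, and the right-hand side because $\det_{1\le i,j\le n}(1/(h_1(w_i,y_j) h_q(w_i,y_j)))$ is antisymmetric in the rows $w_i$ while $\prod_{i<j}(1-q^2 w_i w_j)$ is symmetric.

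The central structural fact is that this determinant vanishes whenever $q^2 w_i w_j = 1$ for some $i \ne j$. I would prove it by the direct verification that the involution $w \mapsto 1/(q^2 w)$ fixes $h_1(w,y)\,h_q(w,y)$ up to the $y$-independent factor $q^{-4} w^{-4}$; hence on the locus $w_j = 1/(q^2 w_i)$ row $j$ of the matrix is a scalar multiple of row $i$, and the determinant is $0$. Consequently the apparent poles of the right-hand side along $q^2 w_i w_j = 1$ are spurious, so, as a function of any single variable $w_a$, both sides have poles only at the roots of $h_1$ and $h_q$, namely $w_a \in \{y_k, 1/y_k, q^{-2} y_k, q^{-2}/y_k\}$. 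A degree count in $w_a$ shows that each side tends to $0$ as $w_a \to \infty$. By Liouville's principle it therefore suffices to show that the two sides have equal residues at every such pole; once this is established, their difference is a rational function of each $w_a$ that is regular everywhere and vanishes at infinity, hence is identically zero.

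For the inductive step I would compute these residues and reduce them to the $(n-1)$-variable identity. The cleanest pole is $w_1 = q^{-2} y_1$: the factor $h_q(\,\cdot\,,y_1)$ occurs in $G_a$ only for $a=1$, so on the left-hand side only the permutations fixing $1$ contribute, and after taking the residue their sum is precisely an $(n-1)$-fold antisymmetrizer in $w_2,\ldots,w_n$. On the right-hand side only the $(1,1)$-entry of the determinant is singular at $w_1 = q^{-2} y_1$, so the residue is its $(1,1)$-cofactor, which is exactly the determinant of size $n-1$ built from $w_2,\ldots,w_n$ and $y_2,\ldots,y_n$. The content of this step is to check that the scalar ``correction factors'' extracted from the two residues agree: the leftover of $\operatorname{Res}_{w_1=q^{-2}y_1} G_1(w_1)^{-1}$ together with $V$ on the left must reproduce the factors $q^{\,n-1} \prod_{j \ge 2} h_1(y_1,y_j)^{-1} \prod_{j \ge 2}(1 - y_1 w_j)^{-1}$ coming from the prefactor on the right. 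Granting this, the residue identity at $w_1 = q^{-2} y_1$ is exactly the $(n-1)$ case and holds by induction; the base case $n=1$ is immediate.

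The main obstacle is the residue check at the intermediate poles $w_1 = q^{-2} y_k$ (and $w_1 \in \{y_k, 1/y_k, q^{-2}/y_k\}$) for $2 \le k \le n-1$. On the right-hand side the residue remains clean, since only the $(1,k)$-entry of the determinant is singular and one again gets a single $(n-1)$-size cofactor; but on the left-hand side the factor $h_q(\,\cdot\,,y_k)$ lies in $G_a$ for every $a \le k$, so several families of permutations contribute and the residue is an a priori unwieldy sum over slots. The crux is to show that, thanks to the deformed-Vandermonde numerator $V$, this sum collapses to exactly the single cofactor produced on the right; this collapse is the technical heart of the argument. I would lighten the bookkeeping by using the involution $w \mapsto 1/(q^2 w)$, which by the key lemma permutes the four families of roots in pairs --- relating $w_1 = y_k$ to $w_1 = q^{-2}/y_k$ and $w_1 = 1/y_k$ to $w_1 = q^{-2} y_k$ --- thereby halving the number of independent residues to be checked, and I would verify that no higher-order poles arise for generic $y_1,\ldots,y_n$. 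With all residues matched, the Liouville argument of the previous paragraph closes the induction.
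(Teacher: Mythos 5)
Your framework is internally consistent as far as it goes, and two of your ingredients do check out: the involution $w \mapsto 1/(q^2 w)$ satisfies $h_1(1/(q^2w),y) = q^{-2}w^{-2}\,h_q(w,y)$ and $h_q(1/(q^2w),y) = q^{-2}w^{-2}\,h_1(w,y)$, so the determinant indeed vanishes on $1-q^2w_iw_j=0$ and the right-hand side has poles in each $w_a$ only at $y_k, 1/y_k, q^{-2}y_k, q^{-2}/y_k$; and your boundary residue at $w_1=q^{-2}y_1$ genuinely matches, via $h_q(q^{-2}y_1,y)=q^{-2}h_1(y_1,y)$ and $(y_1-w_j)/h_1(w_j,y_1)=1/(1-y_1w_j)$, reducing cleanly to the $(n-1)$-instance. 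But the proof is not complete, and the gap is exactly where you place it. At every non-extreme pole --- $w_1=q^{-2}y_k$ with $k\ge 2$ (where $h_q(\cdot,y_k)$ sits in every slot $a\le k$) and $w_1=y_k$ with $k\le n-1$ (where $h_1(\cdot,y_k)$ sits in every slot $a\ge k$), together with their involution partners --- the left-hand residue is a sum over all admissible slots $a$, and each term is an antisymmetrizer in the remaining variables of an expression whose $h_1/h_q$ transition pattern is \emph{not} of the shape \eqref{expr} in $n-1$ variables: deleting slot $a\ne k$ leaves the transition interleaved differently from the $(n-1)$-instance with $y_k$ removed. So the induction hypothesis does not apply termwise, and the asserted ``collapse to the single cofactor'' is precisely the open content of the theorem at these poles. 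You flag this as the technical heart, but you supply no mechanism for it; the involution only pairs up the four pole families and cannot perform the collapse.

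What is missing is an interpolation identity of exactly the type the paper relies on. The paper proves the companion Theorem~\ref{generalilse} (stating that Theorem~\ref{general} is analogous) by a different induction: it peels off $y_n$ and one $w_k$ via a cofactor-type recursion for the antisymmetrizer, and the entire weight of the argument is carried by the identity \eqref{crucial}, a rational-function identity established by noting that both sides are polynomials in $w_k$ of degree at most $n-1$ that agree at the $n$ evaluation points $w_k=y_p$. Your ``collapse'' is essentially a residue shadow of such an identity, so to finish your route you would need to formulate and prove its analogue --- a Lagrange-interpolation statement summing over the slot position $a$ --- at which point your argument and the paper's largely converge. Alternatively, you could first prove that the antisymmetrized left-hand side is symmetric in $y_1,\ldots,y_n$ (which would transport your boundary computation to all $k$ by relabeling), but that symmetry is not at all obvious from \eqref{expr} and is, in effect, another guise of the same missing lemma.
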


Now in order to compute the constant term of \eqref{gog-zinnjustin}   using Theorem~\ref{general}, one sets $w_i=(x_i q^{-1} - 1)/(x_i q -1)$ in \eqref{expr} and obtains
\begin{multline}
\label{transform}
(q-q^{-1})^{\binom{n}{2}} \prod_{1 \le i < j \le n} (1 - (q+q^{-1}) x_j + x_i x_j) 
\prod_{i=1}^{n} (q x_i -1)^{n+3} \\
\times \left( \prod_{j=1}^{i}  (-1+y_j+x_i (q-q^{-1} y_j)) 
(1-y_j+x_i (-q^{-1} + q y_j)) \right. \\ \left. \times
\prod_{j=i}^{n} (-q^{-1}+ q y_j + x_i (1 -  y_j)) (q-q^{-1} y_j + x_i (-1+y_j)) \right)^{-1}.
\end{multline}
We need to compute the antisymmetrizer of 
\begin{equation}
\label{anti}
\prod_{i=1}^{n} x_i^{-2i+1} \prod_{1 \le i < j \le n} (1+ x_j + x_i x_j)
\end{equation}
After setting $y_j=1$ and $q=e^{2 i \pi/3}$, \eqref{transform} is up to a factor that is symmetric in $x_1,\ldots,x_n$ equal to \eqref{anti}. Using Theorem~\ref{general}, the antisymmetrizer can be written in terms of an expression involving a determinant, which can in turn be related to \eqref{magog-zinnjustin}.

\subsection{A variant of Theorem~\ref{general} and its application to  \eqref{constilse2} and to \eqref{alternative} when $b_i=i$}

Next we state a theorem that is similar to Theorem~\ref{general} and that can be proven analogously.

\begin{theo} 
\label{generalilse}
Let 
$h_q(w,y)=q w - q^{-1} y.$
Then the antisymmetrizer of 
\begin{equation}
\label{exprilse}
\frac{\prod_{1 \le i < j \le n} h_q(w_i,w_j)}{\prod_{1 \le i \le j \le n} h_1(w_j,y_i) \prod_{1 \le j \le i \le n} h_q(w_j,y_i)}
\end{equation}
w.r.t.\ $w_1,\ldots,w_n$ is 
$$
\frac{\det_{1 \le i,j \le n} \left( 
\frac{1}{h_1(w_i,y_j) h_q(w_i,y_j)} \right)}{\prod_{1 \le i < j \le n} h_1(y_j,y_i)}.
$$
\end{theo}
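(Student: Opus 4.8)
The plan is to prove the identity by induction on $n$, reconstructing each side from its residues as a rational function of one variable at a time. Write $P$ for the result of applying ${\mathcal AS}_{w_1,\ldots,w_n}$ to \eqref{exprilse}, and set $R:=\det_{1\le i,j\le n}\bigl(\tfrac{1}{h_1(w_i,y_j)h_q(w_i,y_j)}\bigr)\big/\prod_{1\le i<j\le n}h_1(y_j,y_i)$, so that the claim is $P=R$. The first point is that the denominator of \eqref{exprilse} factorises over the $n$ positions: writing $D_p(w)=\prod_{i\le p}h_1(w,y_i)\prod_{i\ge p}h_q(w,y_i)$, the expression \eqref{exprilse} equals $\prod_{1\le i<j\le n}h_q(w_i,w_j)\cdot\prod_{p=1}^n D_p(w_p)^{-1}$. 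Consequently, as a function of any single $w_i$ (the others held generic), each of $P$ and $R$ is rational, decays like $w_i^{-2}$ at infinity, and has only simple poles, located at the $2n$ points $w_i\in\{y_1,\ldots,y_n\}\cup\{q^{-2}y_1,\ldots,q^{-2}y_n\}$ (the latter from $h_q(w_i,y_l)=q(w_i-q^{-2}y_l)$). Such a function is the sum of its principal parts, hence completely determined by its residues, so it suffices to match the residue of $P$ with that of $R$ at every pole.

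The base case $n=1$ is immediate, both sides reducing to $h_1(w_1,y_1)^{-1}h_q(w_1,y_1)^{-1}$. For the inductive step I would first dispose of the one clean pole. Using the coset decomposition ${\mathcal AS}_{w_1,\ldots,w_n}=\sum_{p=1}^n(-1)^{p-1}{\mathcal AS}_{w_2,\ldots,w_n}[\,\cdot\,|_{w_1\ \text{at position}\ p}]$ together with the factorisation $\prod_p D_p(w_p)^{-1}$, one sees that a pole of $P$ at $w_1=q^{-2}y_1$ can arise only from position $p=1$, since the factor $h_q(w_1,y_1)$ occurs in $D_p$ only there. Taking the residue, the numerator factors $\prod_{j\ge 2}h_q(w_1,w_j)$ specialise at $w_1=q^{-2}y_1$ to $q^{-(n-1)}\prod_{j\ge2}(y_1-w_j)=(-q^{-1})^{n-1}\prod_{j\ge2}(w_j-y_1)$, which is exactly, up to the constant $(-q^{-1})^{n-1}$, the product of the $h_1(w_j,y_1)$-factors that must be removed from the leftover denominator when $y_1$ is deleted. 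Thus $\mathrm{Res}_{w_1=q^{-2}y_1}P$ collapses to the $(n-1)$-variable instance of \eqref{exprilse} with $y_1$ dropped; comparing with $\mathrm{Res}_{w_1=q^{-2}y_1}R$, which by a cofactor expansion of the determinant along its first row and column is a constant multiple of the statement $R^{(n-1)}$ for $y_2,\ldots,y_n$, the inductive hypothesis gives agreement.

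The main obstacle is matching the residues at the remaining $2n-1$ poles. For a generic pole $w_1=y_k$ or $w_1=q^{-2}y_k$ with $k\ge2$, several positions $p$ contribute in the coset expansion, so $\mathrm{Res}_{w_1=\cdot}\,P$ is a sum over $p$ sitting inside the antisymmetriser ${\mathcal AS}_{w_2,\ldots,w_n}$, and one must show that this sum collapses to the single $(n-1)$-term produced by $\mathrm{Res}_{w_1=\cdot}\,R$. I expect this collapse to be the technical heart and to proceed exactly as in Fonseca and Zinn-Justin's proof of Theorem~\ref{general}: after antisymmetrising in $w_2,\ldots,w_n$, the position-sum telescopes by a Vandermonde-type summation of the kind recorded in Lemma~\ref{zeilberger}. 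A convenient bookkeeping device is the column partial fraction $\frac{1}{h_1(w,y_j)h_q(w,y_j)}=\frac{1}{y_j(q-q^{-1})}\bigl(\frac{1}{w-y_j}-\frac{1}{w-q^{-2}y_j}\bigr)$, which, by multilinearity, splits $R$ into a signed sum of Cauchy determinants and so exposes the residues of $R$ in closed form for comparison with those of $P$.

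Finally, two remarks should streamline the write-up. First, $R$ is manifestly symmetric in $y_1,\ldots,y_n$ (interchanging two $y$'s changes the sign of both the determinant and $\prod_{1\le i<j\le n}h_1(y_j,y_i)$), so once the residue bookkeeping is organised $y$-symmetrically the number of genuinely distinct cases drops sharply; note, however, that the two pole families behave asymmetrically, as only the $h_q$-pole at $q^{-2}y_1$ admits the clean reduction above, so the $h_1$-poles at the $y_k$ genuinely require the telescoping argument. Second, since all computations are carried out for generic $q$ and $y_1,\ldots,y_n$, the identity for the excluded degenerate values follows by continuity. The one step I am least sure will be short is the Vandermonde collapse for the interior poles; everything else is routine residue algebra.
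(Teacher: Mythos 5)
Your reduction framework is sound in principle: as a function of a single $w_i$ each side of the identity is rational, decays like $w_i^{-2}$ at infinity, and (for generic $q$ and $y$'s) has only simple poles at the $2n$ points $y_1,\ldots,y_n,q^{-2}y_1,\ldots,q^{-2}y_n$, so matching all residues would indeed suffice; and your computation at the clean pole $w_1=q^{-2}y_1$ is correct (incidentally, $w_1=y_n$ is a second clean pole, since $h_1(w,y_n)$ occurs only in $D_n$, so your count of "generic" poles is $2n-2$, not $2n-1$). But the proposal has a genuine gap precisely where you flag it, and that gap is the entire content of the theorem. At a pole $w_1=y_k$ with $k\le n-1$ (resp.\ $w_1=q^{-2}y_k$ with $k\ge 2$), all positions $p\ge k$ (resp.\ $p\le k$) contribute, and no individual term reduces to an $(n-1)$-variable instance of \eqref{exprilse}: at $w_1=y_k$ the specialized numerator factors $h_q(w_j,y_k)$ for variables at positions strictly between $k$ and $p$, and $h_q(y_k,w_j)\propto(w_j-q^2y_k)$ for positions beyond $p$, leave uncancelled $y_k$-dependence in every summand. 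Only the full position-sum, inside ${\mathcal AS}_{w_2,\ldots,w_n}$, can collapse, and you give no argument for this collapse --- you assert it by analogy. That collapse is exactly equivalent to the paper's key identity \eqref{crucial}, so deferring it defers the whole proof.

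Moreover, the tool you name for the missing step would not supply it. Lemma~\ref{zeilberger} is a geometric-series antisymmetrization used for the Magog lattice-path generating functions and plays no role in statements of this type. The mechanism actually used in the paper (and in the Fonseca--Zinn-Justin treatment of Theorem~\ref{general} that you invoke) is different and not residue-theoretic: one expands ${\mathcal AS}_{w_1,\ldots,w_n}$ according to which variable occupies the last slot --- an exact algebraic recursion, with no residues taken --- applies the induction hypothesis in $n-1$ variables, and then resums the resulting $k$-sum into the determinant via the interpolation identity \eqref{crucial}, which is proved by noting that both sides are polynomials in $w_k$ of degree at most $n-1$ and checking agreement at the $n$ points $w_k=y_p$, where all but one summand vanishes. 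A degree-count-plus-evaluation argument of this kind is what your "Vandermonde collapse" would have to become, and it is not a short telescoping. Finally, note that your suggestion to organize the case analysis $y$-symmetrically is circular on the antisymmetrizer side: symmetry of ${\mathcal AS}_{w_1,\ldots,w_n}$ applied to \eqref{exprilse} in $y_1,\ldots,y_n$ is a consequence of the theorem, not an a priori fact, so it cannot be used to reduce the pole-by-pole verification. In summary: base case, framework, and one of the $2n$ residue matchings are in order, but the multi-position residue collapse --- the heart of the proof --- is missing.
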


\begin{proof} The proof is by induction w.r.t.\ $n$. The case $n=1$ is easy to check.

Let 
$A(w_1,\ldots,w_n;y_1,\ldots,y_n)$ denote the antisymmetrizer of \eqref{exprilse}. We have the following recursion for $A(w_1,\ldots,w_n;y_1,\ldots,y_n)$.
$$
A(w_1,\ldots,w_n;y_1,\ldots,y_n) =
\sum_{k=1}^{n} (-1)^{k+n} \frac{\prod\limits_{1 \le i \le n, i \not= k} 
h_q(w_i,w_k)}{\prod\limits_{i=1}^{n} h_1(w_k,y_i) h_q(w_i,y_n)} 
A(w_1,\ldots,\widehat{w_k},\ldots,w_n;y_1,\ldots,y_{n-1})
$$ 
By the induction hypothesis, the right-hand side is equal to 
\begin{equation}
\label{rec1}
\sum_{k=1}^{n} (-1)^{k+n} \det\limits_{1 \le i \le n, i \not=k \atop 1 \le j \le n-1} \left( 
\frac{1}{h_1(w_i,y_j) h_q(w_i,y_j)} \right) \frac{\prod\limits_{1 \le i \le n, i \not= k} 
h_q(w_i,w_k) \prod\limits_{1 \le i \le n-1} h_1(y_n,y_i)}{\prod\limits_{i=1}^{n} h_1(w_k,y_i) h_q(w_i,y_n) \prod\limits_{1 \le i < j \le n} h_1(y_j,y_n)}.
\end{equation}
We define 
$$
g_l(w_1,\ldots,w_n;y_1,\ldots,y_n) = \prod_{1 \le i \le n-1, i \not= l} 
\frac{h_1(y_i,y_n)}{h_1(y_l,y_i)} \prod_{i=1}^{n} 
\frac{h_q(w_i,y_l)}{h_q(w_i,y_n)}
$$
and claim that 
\begin{equation}
\label{crucial}
\sum_{l=1}^{n} g_l(w_1,\ldots,w_n;y_1,\ldots,y_n) 
\frac{1}{h_1(w_k,y_l) h_q(w_k,y_l)} = 
 \frac{\prod\limits_{1 \le i \le n, i \not=k} h_q(w_i,w_k) \prod\limits_{1 \le i \le n-1} h_1(y_i,y_n)}
{\prod\limits_{1 \le i \le n} h_1(w_k,y_i) h_q(w_i,y_n)}.
\end{equation}
Assuming \eqref{crucial} is true, we can replace in \eqref{rec1} the expression on the right-hand side of \eqref{crucial} by the left-hand side of 
\eqref{crucial}. We change the order of summation and obtain 
$$
(-1)^{n-1} \prod_{1 \le i < j \le n} h_1(y_j,y_i)^{-1} \sum_{l=1}^{n} g_l \left. \det_{1 \le i, j \le n} \left( \frac{1}{h_1(w_i,y_j) h_q(w_i,y_j)} \right) \right|_{y_n=y_l} 
$$
and the last expression is equal to the expression in the theorem.
In order to prove \eqref{crucial}, we rearrange the identity and obtain 
$$
\sum_{l=1}^{n} \prod_{1 \le i \le n-1 \atop i \not= l} \frac{h_1(y_i,y_n)}{h_1(y_l,y_i)} 
\prod_{1 \le i \le n \atop i \not= k} h_q(w_i,y_l) \prod_{1 \le i \le n \atop i \not= l} h_1(w_k,y_i) \\
= \prod_{1 \le i \le n, i \not=k} h_q(w_i,w_k) \prod_{1 \le i \le n-1} h_1(y_i,y_n).
$$
We consider both sides as polynomials in $w_k$. The degree is in both cases not greater than $n-1$ and so it suffices to show that they agree at the evaluations $w_k=y_p$, $1 \le p \le n$. In this case, each summand on the left-hand side vanishes except for the summand corresponding to $l=p$. 
\end{proof}
Note that the statement of this theorems differs from the statement of Theorem~\ref{general} only in so far that we set $h_q(w,y)=q w - q^{-1} y$ instead of 
$h_q(w,y)=(q w - q^{-1} y)(q w y - q^{-1})$.

Before we apply this theorem, we mention another formula that will be useful in the following.  The formula appeared in \cite[Eq (43)-(47)]{BehrendWeightedEnum}. Suppose $f(x,y)$ is a power series in $x$ and $y$, then 
\begin{equation}
\label{behrend}
\lim_{(x_1,\ldots,x_n) \to (x,\ldots,x) \atop (y_1,\ldots,y_n) \to (y,\ldots,y)}
\frac{\det\limits_{1 \le i, j \le n} \left( f(x_i,y_j) \right)}{\prod\limits_{1 \le i < j \le n} (x_j-x_i)(y_j-y_i)} = \det_{0 \le i,j  \le n-1} \left( [u^i v^j] f(x+u,y+v) \right),
\end{equation}
where $[u^i v^j] f(x+u,y+v)$ denotes the coefficient of $u^i v^j$ in $f(x+u,y+v)$. A close relative of this formula is the following: Suppose $f_j(x)$ is a power series in $x$ for $1 \le j \le n$, then 
\begin{equation}
\label{behrendrelative}
\lim_{(x_1,\ldots,x_n) \to (x,\ldots,x)} \frac{\det\limits_{1 \le i,j \le n} \left( f_j(x_i) \right)}{\prod\limits_{1 \le i < j \le n} (x_j-x_i)} = \det\limits_{0 \le i \le n-1 \atop 1 \le j \le n } \left( [u^i] f_j(x+u) \right).
\end{equation}

We compute the symmetrizer of \eqref{constilse2}: We set $w_i=(x_i+1+q^{-1})/(x_i+1+q)$ in \eqref{exprilse} and obtain 
\begin{multline*}
\left(q - q^{-1} \right)^{\binom{n}{2}} \prod_{1 \le i < j \le n} 
(q^{-1}+2+q+(q^{-1}+1+q) x_i + x_j + x_i x_j) \prod_{i=1}^{n} (1+q+x_i)^2 \\
\times \left( \prod_{i=1}^{n} \prod_{j=1}^{i} 
\left(x_i (1-y_j) + (1+q^{-1})(1-q y_j) \right)
\prod_{j=i}^{n} \left( x_i (q - q^{-1} y_j) + (1+q)(1- y_j q^{-1}) \right) \right)^{-1}.
\end{multline*} 
By setting $y_i=1$, $q=e^{2 i \pi/3}$ and applying Theorem~\ref{generalilse}, we obtain 
\begin{multline}
\label{twice}
{\mathcal AS}_{x_1,\ldots,x_n} \left[ \prod_{1 \le i < j \le n} (1+x_j + x_i x_j) \prod_{i=1}^n (1+x_i)^i x_i^{-n+1} \right] \\
=(-1)^{\binom{n+1}{2}} q^n (q-q^{-1})^{(n+3)n/2}  \prod_{i=1}^n (1+x_i)^{n+1} x_i^{-n+1} (1+q+x_i)^{-2} \\
\times \lim_{(y_1,\ldots,y_n) \to 1} \det_{1 \le i,j \le n} \left( \frac{1}{\left(y_j - \frac{x_i + 1 + q^{-1}}{x_i+1+q} \right)
\left(y_j - q^2 \frac{x_i + 1 + q^{-1}}{x_i+1+q}\right)}  \right) \prod_{1 \le i < j \le n} (y_j-y_i)^{-1}.
\end{multline}
We use the following partial fraction decomposition 
$$
\frac{1}{(y-a)(y-b)} = \frac{1}{a-b} \left( \frac{1}{y-a} - \frac{1}{y-b} \right)
$$
to rewrite the matrix entry of the determinant.
We obtain 
\begin{multline}
\label{2ncauchy}
(-1)^{\binom{n}{2}} (q-q^{-1})^{\binom{n+1}{2}}  \prod_{i=1}^n (1+x_i)^{n+1} x_i^{-n+1} (x_i+1+q)^{-1} (x_i+1+q^{-1})^{-1} \\
\times \lim_{(y_1,\ldots,y_n) \to 1} \det_{1 \le i,j \le n} \left( \frac{1}{\left(y_j - \frac{x_i + 1 + q^{-1}}{x_i+1+q} \right)}
-\frac{1}{\left(y_j - q^2 \frac{x_i + 1 + q^{-1}}{x_i+1+q}\right)}  \right) \prod_{1 \le i < j \le n} (y_j-y_i)^{-1}
\end{multline}
for the right-hand side. Now we can apply \eqref{behrendrelative} to obtain the following.
$$
\prod_{i=1}^n (1+x_i)^{n+1} x_i^{-n+1} (x_i+1+q^{-1})^{-1}  \det_{1 \le i,j \le n} \left( (x_i+1+q)^{j-1}  \left( 1- (-1-x_i)^{-j} q^{-j}  \right) \right)
$$

Alternatively, we can also use the Cauchy determinant.
$$
\det_{1 \le i, j \le n} \left( \frac{1}{x_i + y_j} \right) = \frac{\prod_{1 \le i < j \le n} (x_j-x_i)(y_j-y_i)}{\prod_{i,j=1}^{n} (x_i+y_j)}
$$
The determinant in \eqref{2ncauchy} can be written as a sum of $2^n$ Cauchy determinants.
\begin{multline*}
\det_{1 \le i,j \le n} \left( \frac{1}{\left(y_j - \frac{x_i + 1 + q^{-1}}{x_i+1+q} \right)}
-\frac{1}{\left(y_j - q^2 \frac{x_i + 1 + q^{-1}}{x_i+1+q}\right)}  \right)  =
\sum_{(s_1,\ldots,s_n) \in \{0,1\}^n} \det_{1 \le i,j \le n} \left( (-1)^{s_i} \frac{1}{\left(y_j - q^{2 s_i} \frac{x_i + 1 + q^{-1}}{x_i+1+q} \right)} \right)
\end{multline*}
In a similar situation, namely the proof of the equality of the constant terms of 
\eqref{gog-zinnjustin} and \eqref{magog-zinnjustin}, only one of these Cauchy determinants contributes to the constant term. This is not true here, which is mainly due to the factor $\prod\limits_{i=1}^n x_i^{-n+1}$ in \eqref{constilse2}. However, it is still possible to get rid of the extra set of variables $y_1,\ldots,y_n$ and it follows that the right-hand side is equal to 
\begin{multline*}
 \prod_{i=1}^n (1+x_i)^{n+1} x_i^{-n+1} (x_i+1+q^{-1})^{-1} 
\sum_{(s_1,\ldots,s_n) \in \{0,1\}^n} (-1)^{(n-1)(s_1+\ldots+s_n)+\binom{n}{2}} q^{-s_1-\ldots-s_n} \prod_{i=1}^{n} (1+x_i)^{- n s_i} \\ 
\times \prod_{1 \le i < j \le n} \left((1+  x_i x_j) (s_i-s_j) + x_i (1-s_i - 2 s_j + 3 s_i s_j) + x_j (-1+s_j+2 s_i - 3 s_i s_j)  \right), 
\end{multline*}
and this can also be written as follows.
$$
\prod_{i=1}^n (1+x_i)^{n+1} x_i^{-n+1} (x_i+1+q^{-1})^{-1} \det_{1 \le i, j \le n}
\left( x_i^{j-1}-(-1-x_i)^{-j} q^{-1} \right)
$$

\medskip

As for computing the symmetrizer of \eqref{alternative} when $b_i=i$,
we set $w_i=(x_i q^{-1} - 1)/(x_i q -1)$ in \eqref{exprilse} and obtain
\begin{multline}
\label{transformilse}
(q-q^{-1})^{\binom{n}{2}}
\prod_{1 \le i < j \le n} (1 - (q+q^{-1}) x_j + x_i x_j) 
\prod_{i=1}^{n} (q x_i -1)^{2} \\
\times
\left( \prod_{j=1}^{i} (-1+y_j + x_i (q^{-1} - q y_j))
 \prod_{j=i}^{n} (-q+y_j q^{-1} + 
x_i (1 - y_j)) \right)^{-1}
\end{multline}
Furthermore, we set $y_i=1$ and $q=e^{i \pi/3}$, and Theorem~\ref{generalilse} now implies in a similar way as above
\begin{multline*}
{\mathcal AS}_{x_1,\ldots,x_n} \left[ \prod_{1 \le i < j \le n} (1-x_j + x_i x_j) \prod_{i=1}^n (1-x_i)^{-n} x_i^{-n+1-i} \right]
= \\
(-1)^n (q-q^{-1})^{(n+1)n/2}  \prod_{i=1}^n (1-x_i)^{-n} x_i^{-n+1} (q x_i-1)^{-1}
(q^{-1} x_i-1)^{-1} \\
\times \lim_{(y_1,\ldots,y_n) \to 1} \det_{1 \le i,j \le n} \left( \frac{1}{\left(y_j - \frac{x_i q^{-1} -1}{x_i q -1} \right)}-\frac{1}{
\left(y_j - q^2 \frac{x_i q^{-1} -1}{x_i q -1} \right)}  \right) \prod_{1 \le i < j \le n} (y_j-y_i)^{-1}.
\end{multline*}
Using \eqref{behrendrelative}, we obtain 
$$
 \prod_{i=1}^n (1-x_i)^{-n} x_i^{-n+1} 
(q^{-1} x_i-1)^{-1}  \det_{1 \le i,j \le n} \left( (1-x_i q)^{j-1} 
\left( q^{-j} - x_i^{-j} \right) \right).
$$
Alternatively, we can also use the approach from above involving the Cauchy determinant and obtain
\begin{multline*}
  \prod_{i=1}^n (1-x_i)^{-n} x_i^{-n+1} 
(q^{-1} x_i-1)^{-1} 
\sum_{(s_1,\ldots,s_n) \in \{0,1\}^n} (-q)^{-s_1-\ldots-s_n} (-1)^{\binom{s_1+\ldots+s_n}{2}+\binom{n-s_1-\ldots-s_n}{2}+n}
\prod_{i=1}^{n} x_i^{n(s_i-1)} \\ 
\times \prod_{1 \le i < j \le n} \left((1+  x_i x_j) (s_i-s_j) + x_i (-1+s_i + 2 s_j - 3 s_i s_j) + x_j (1-s_j-2 s_i + 3 s_i s_j)  \right).
\end{multline*}
This can also be written as
$$
\prod_{i=1}^n (1-x_i)^{-n} x_i^{-n+1} (q^{-1} x_i-1)^{-1} \det_{1 \le i,j \le n} 
\left(-x_i^{-j} + (1-x_i)^{j-1} q^{-1} \right). 
$$

\subsection{The application of Theorem~\ref{generalilse} to \eqref{alternative2} when $b_i=i$} The expression in \eqref{alternative2} is more complicated at first glance as it already involves the antisymmetrizer operator. However, as our approach involves the computation of the antisymmetrizer anyway, this is no disadvantage.  

By \eqref{twice}, \eqref{alternative2} is equal to 
\begin{multline}
\label{third}
(-1)^{\binom{n+1}{2}} q^n (q-q^{-1})^{(n+3)n/2}  \prod_{i=1}^n (1+x_i)^{n+1} (1+q+x_i)^{-2} \\
\times \lim_{(y_1,\ldots,y_n) \to 1} \det_{1 \le i,j \le n} \left( \frac{1}{\left(y_j - \frac{x_i + 1 + q^{-1}}{x_i+1+q} \right)
\left(y_j - q^2 \frac{x_i + 1 + q^{-1}}{x_i+1+q}\right)}  \right) \prod_{1 \le i < j \le n} (x_j-x_i)^{-1} (y_j-y_i)^{-1}, 
\end{multline}
where $q=e^{2 \pi i /3}$.
The same procedure as above can be used to show that this is equal to 
\begin{multline}
\label{single}
 \prod_{i=1}^n (1+x_i)^{n+1} (x_i+1+q^{-1})^{-1} 
\sum_{(s_1,\ldots,s_n) \in \{0,1\}^n} (-1)^{(n-1)(s_1+\ldots+s_n)+\binom{n}{2}} q^{-s_1-\ldots-s_n} \prod_{i=1}^{n} (1+x_i)^{- n s_i} \\ 
\times \prod_{1 \le i < j \le n} \frac{(1+  x_i x_j) (s_i-s_j) + x_i (1-s_i - 2 s_j + 3 s_i s_j) + x_j (-1+s_j+2 s_i - 3 s_i s_j)}{x_j-x_i} \\
= \prod_{i=1}^n (1+x_i)^{n+1} (x_i+1+q^{-1})^{-1}  \det_{1 \le i, j \le n}
\left( x_i^{j-1}-(-1-x_i)^{-j} q^{-1} \right)\prod_{1 \le i < j \le n} (x_j-x_i)^{-1}.
\end{multline}

On the other hand, the advantage of \eqref{alternative2} is that it is actually a polynomial and therefore we can compute the constant term using \eqref{behrend}. Now, by \eqref{third}, it follows that the number of $n \times n$ ASMs is 
\begin{multline}
\lim_{(x_1,\ldots,x_n) \to 0 \atop (y_1,\ldots,y_n) \to 0}
 (-1)^n (q^{-1}-q)^{\binom{n+1}{2}}  \prod_{i=1}^n (1+x_i)^{n+1} (1+q^{-1}+x_i)^{-1}
\prod_{1 \le i < j \le n} (x_j-x_i)^{-1} (y_j-y_i)^{-1}  \\
\times  \det_{1 \le i,j \le n} \left( \frac{1}{(y_j +1)(x_i+1+q) - x_i - 1 - q^{-1}}
- \frac{1}{(y_j +1)(x_i+1+q) - q^2 (x_i + 1 + q^{-1})} \right).
\end{multline}
The coefficient of $x^i y^j$ in
$$
\frac{1}{(y +1)(x+1+q) - x - 1 - q^{-1}}
- \frac{1}{(y +1)(x+1+q) - q^2 (x + 1 + q^{-1})}
$$
when considering this expression as a power series in $x,y$ is
$$
\binom{j}{i} (1+q)^{-i-1} (q-1)^{-j-1} (-1)^j q^{j+1} -
\sum_{k=i}^{i+j} \binom{k}{j} \binom{j}{i-k+j} \frac{(1+q)^{k-i-j-1}}{(1-q)^{j+1}} 
(-1)^k.
$$
It follows that the number of $n \times n$ ASMs is given by 
$$
(1+q)^{-n} (-q)^{-\binom{n}{2}} \det_{0 \le i, j \le n-1} \left( 
\binom{j}{i} (-1)^j q^{j+1} + \sum_{k=0}^{n-1} \binom{k+i}{j} \binom{j}{k} (-1-q)^{k+i-j} \right).
$$
Using basic properties of the binomial coefficient and the Chu-Vandermonde summation, it can be shown that 
\begin{multline*}
\binom{j}{i} (-1)^j q^{j+1} + \sum_{k=0}^{n-1} \binom{k+i}{j} \binom{j}{k} (-1-q)^{k+i-j} \\
= \sum_{k=0}^{n-1} (-1)^i \binom{i}{k} \binom{-k-1}{j} \left( q^{j+1} (-1)^k + q^k (-1)^j \right), 
\end{multline*}
and it follows that the number of $n \times n$ ASMs is given by 
\begin{multline*}
\det \left[ \left( 
\binom{i}{j} (-1)^{i+j} \right)_{0 \le i,j \le n-1} \cdot \left( 
\binom{i+j}{j} \frac{1 - (-q)^{j+1-i}}{1+q} \right)_{0 \le i,j \le n-1} \right] \\=
\det_{0 \le i,j \le n-1}  \left(\binom{i+j}{j} \frac{1 - (-q)^{j+1-i}}{1+q} \right),
\end{multline*}
provided that $q=e^{2 i \pi / 3}$. This seems to be a new determinant for the ASM numbers.

In a forthcoming paper by F. Aigner, the more general determinant 
\begin{equation}
\label{parameter}
\det_{0 \le i, j \le n-1} \left( 
\binom{x+i+j}{j} \frac{1 - (-q)^{j+1-i}}{1+q} \right)
\end{equation}
will be computed for all sixth roots of unity $q$ not equal to $1$, and consequences of this for the enumeration of Gogs.
In these cases, the determinants have only integer zeros as a polynomial in $x$. 
Moreover, the determinant is related to the following determinant 
\begin{equation}
\label{zare}
\det_{0 \le i, j \le n-1} \left( 
\binom{x+i+j}{j} + q \, \delta_{i,j} \right)
\end{equation}
that was considered by Ciucu, Eisenk\"olbl, Krattenthaler and Zare \cite{zare}, and computed for all sixth roots of unity $q$. To be more precise, the quotient of the first and the second determinant is 
$(-q)^{n}$ if $q=-\frac{1}{2}+\frac{\sqrt{3}}{2} i$, and it is $(-q)^{-n}$ if $q=-\frac{1}{2}-\frac{\sqrt{3}}{2} i$.
Other curious observations such as the following will be studied: For general $q$, there seems to be a sequence of functions $p_n(x,q)$ that are polynomials 
in $x$ of degree no greater than $\binom{\lceil (n+1)/2 \rceil}{2}$ and Laurent polynomials in $q$ with highest exponent 
$\binom{\lfloor (n+1)/2 \rfloor}{2}$ and lowest exponent $-\binom{\lfloor (n+1)/2 \rfloor}{2}$ over $\mathbb{Q}$ such that the determinant in \eqref{parameter} is equal to $p_{n-1}(x,q) p_{n}(x,q)$.

Let us finally remark that one could of course apply \eqref{behrendrelative} to \eqref{single} and obtain 
$$
\det_{0 \le i,j \le n-1} \left(- q \binom{i+j}{i}  - q^2 \delta_{i,j} \right),
$$
which is up to a trivial factor a specialization of \eqref{zare} and has thus already been computed.

\section{Acknowledgement}  The author thanks an anonymous referee for the careful reading of the paper and several interesting comments.

\appendix

\section{The case $v=1-u$}
\label{v1minusu}
\subsection{The cases $(u,v)=(0,1)$ and $(u,v)=(1,0)$} The generating function of monotone triangles with bottom row $b_1,\ldots,b_n$ evaluated at $(u,v)=(0,1)$ is equal to the number of monotone triangles with that bottom row, where SE-diagonals are strictly increasing, while the evaluation at $(u,v)=(1,0)$ is the number of monotone triangles that are strictly increasing along NE-diagonals. The number is in both cases $\prod_{1 \le i < j \le n} \frac{b_j-b_i}{j-i}$ as both sets are in bijective correspondence with Gelfand-Tsetlin patterns with bottom row 
$b_1-1,b_2-2,\ldots,b_n-n$ (in the first case, this follows by subtracting $i$ from the $i$-th NE-diagonal for all $i$, counted from the left, while in the second case, this follows by subtracting $i$ from the $i$-th SE-diagonal for all $i$, also counted from the left), and the number of Gelfand-Tsetlin patterns with bottom row $b_1,\ldots,b_n$ is $\prod_{1 \le i < j \le n} \frac{b_j-b_i+j-i}{j-i}$, see \cite{gelfand} or \cite[Corollary 7.21.4 and Lemma
7.21.1]{Stanley2}.

\subsection{The case $(u,v)=(\frac{1}{2},\frac{1}{2})$} On the other hand, the generating function of monotone triangles with bottom row $b_1,\ldots,b_n$ evaluated at $(u,v)=(\frac{1}{2},\frac{1}{2})$ is---up to the factor $2^{\binom{n}{2}}$---equal to the $2$-enumeration of monotone triangles with respect to the number of entries that lie strictly between their SW-neighbors and 
and their SE-neighbors.  By \cite[Theorem 2]{DPPMRR}, this number is $2^{\binom{n}{2}} \prod_{1 \le i < j \le n} \frac{b_j-b_i}{j-i}$. If $(b_1,\ldots,b_n)=(1,\ldots,n)$, this corresponds to the $2$-enumeration of ASMs with respect to the number of $-1$'s, or, equivalently, to the enumeration of domino tilings of the Aztec diamond of order $n$, see \cite{ProppDomino} and also \cite[Remark 4.3]{CiucuReflect}, where in order to obtain the number of the latter one has to multiply by $2^n$. 

\subsection{The general case} The results mentioned in the previous two paragraphs can also be deduced from Theorem~\ref{operator} as follows. In all cases, 
$\prod_{1 \le p < q \le n} \e_{x_q} \st_{x_q,x_p}$ is a symmetric polynomial in $\fd_{x_1},\ldots,\fd_{x_n}$ with constant term $1$, and the enumeration formulas follow from the fact that the application of an ``operator'' polynomial with these properties to
$\prod_{1 \le i < j \le n} \frac{x_j-x_i}{j-i}$ leaves the polynomial invariant as was shown, e.g., in \cite[Lemma 2.5]{FischerASMProof2015}. In fact, a common generalization follows if we assume that 
$v=1-u$, because in this case the same argument applies and we can conclude that the generating function is $\prod_{1 \le i < j \le n} \frac{b_j-b_j}{j-i}$. (This result could also be derived using known proofs for the $2$-enumeration of ASMs or the enumeration of perfect matchings of the Aztec diamonds, see, e.g.,  \cite{DPPMRR,ProppDomino}.) In particular, it follows that the generating function is independent of $u$. 

\subsubsection{Perfect matchings}
The specialization $v=1-u$ of the generating function 
is also the weighted enumeration of perfect matchings of a certain portion of 
the square grid: First observe that, by extending the bijection between monotone triangles 
with bottom row $1,2,\ldots,n$ and $n \times n$ ASMs to monotone triangles 
with arbitrary increasing bottom row of positive\footnote{It is no restriction 
to confine our considerations to monotone triangles that contain only positive 
integers in the bottom row, because every monotone triangle that contains 
non-positive integers can be transformed to one that has only positive 
integers by adding the same positive integer to every entry.} integers, 
say, $b_1,\ldots, b_n$, we see that the latter are in bijection to $n \times m$ 
matrices, where $m$ is any positive integer with $m \ge b_n$ that have the same 
properties as ASMs with the exception that column sums do not have to be $1$, 
but it is still required that the topmost non-zero entry of each column is $1$, 
and, in addition, the column sums have to be $1$ precisely for the columns 
$b_1,\ldots,b_n$.

\begin{figure}
\scalebox{0.3}{
\raisebox{2cm}
{\includegraphics{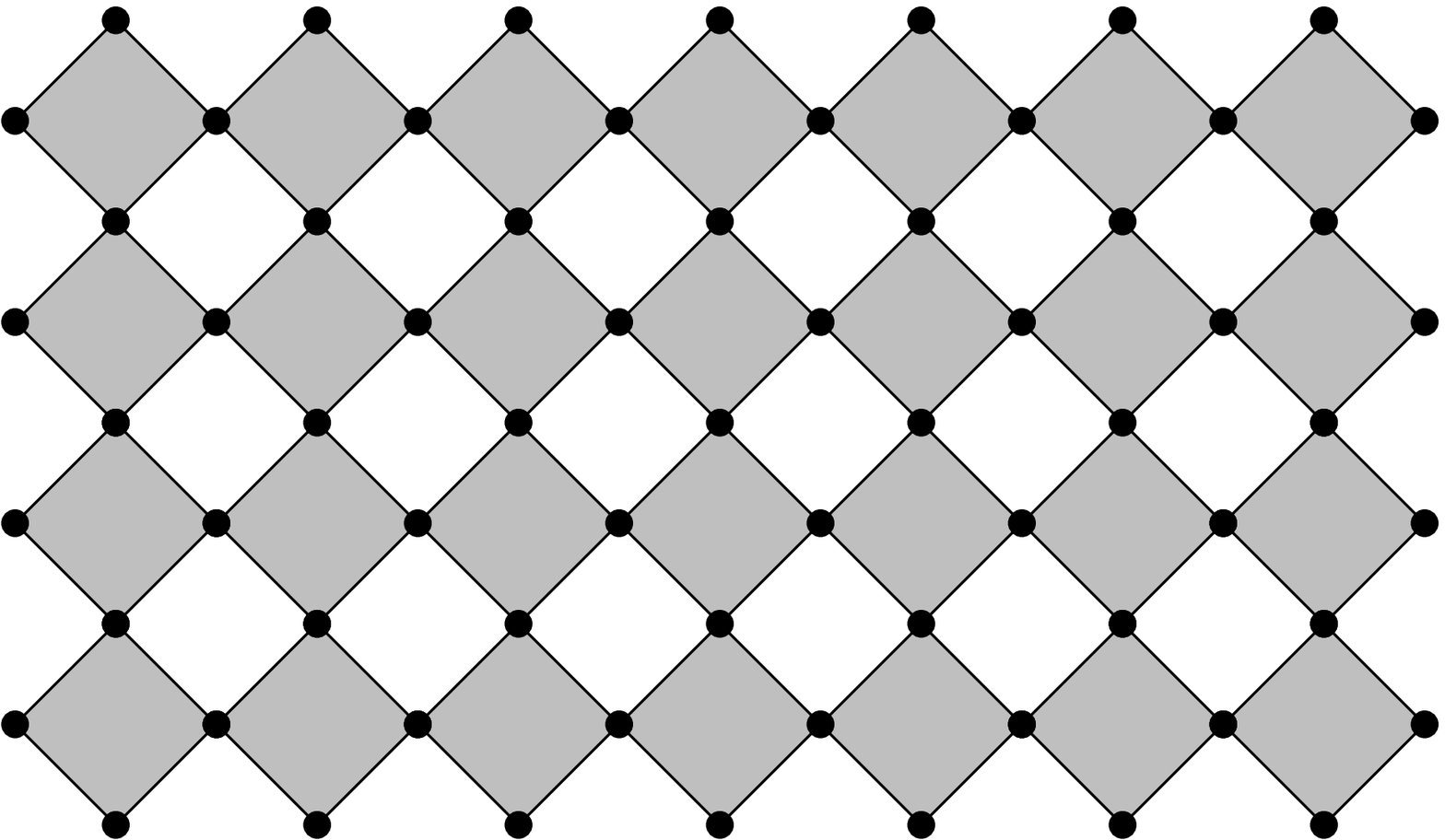}}} \qquad \qquad
\scalebox{0.3}{
\includegraphics{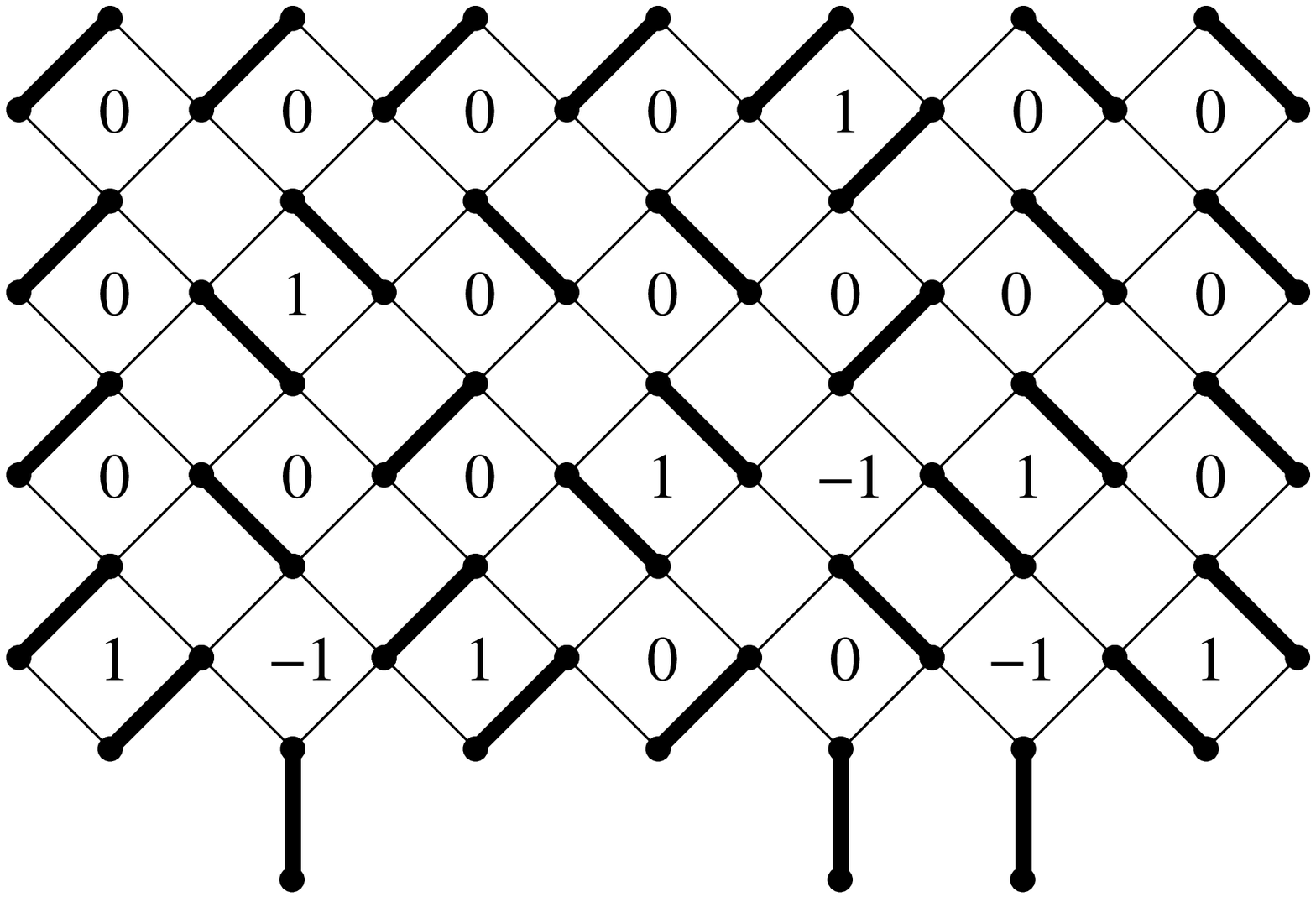}}
\caption{\label{ar} $\ar^{4,7}$ (left) and an example (right)}
\end{figure}

Now let $\ar^{n,m}$ denote the rectangular graph that consists 
of $n$ rows of sequences of $m$ consecutive cells of the form $\Diamond$, see Figure~\ref{ar} (left) for $\ar^{4,7}$. It is a well-known fact, see \cite{DissMihai}, that the perfect matchings of $\ar^{n,n}$ can be partitioned into classes that are indexed by $n \times n$ ASMs, such that, for a given ASM, the number of perfect matchings in the associated class is a power of $2$, where the exponent is just the number of $1$'s in the ASMs. This can be extended to the ``rectangular alternating sign matrices'' described in the previous paragraph as follows. For a given strictly increasing sequence of positive integers $\mathbf{b}=(b_1,\ldots,b_n)$ of length $n$ and $m \ge b_n$, let $\ar^{n,m}_{\mathbf{b}}$ denote the graph that is obtained from $\ar^{n,m}$ by adding vertical edges incident with the bottom vertices of the cells in the bottom row of $\ar^{n,m}$ except for the vertices in columns $b_1,\ldots,b_n$. For instance, the underlying graph in Figure~\ref{ar} (right) is $\ar^{4,7}_{1,3,4,7}$. Then the perfect matchings of $\ar^{n,m}_{\mathbf{b}}$ can be partitioned such that the classes are indexed by monotone triangles with bottom row $\mathbf{b}$ and the number of elements in each class is a power of $2$, where the exponent is the number of $1$'s in the corresponding rectangular alternating sign matrix. The perfect matching in 
Figure~\ref{ar} (right) is a perfect matching that lies in the class of the following monotone 
triangle.
$$
\begin{array}{ccccccc}
& & & 5 & & & \\
& & 2 & & 5 & & \\
& 2 & & 4 & & 6 \\
1 & & 3 & & 4 & & 7
\end{array} 
$$
The corresponding rectangular alternating sign matrix can be obtained by counting, for each cell, the number of edges that are part of the perfect matching and subtract $1$, see 
Figure~\ref{ar} (right), where we have put the numbers into the appropriate cells. The other perfect matchings that are in the class of the given monotone triangle are obtained by ``rotating''  the perfect matching edges of those cells thatcontain two matching edges independently.

Next we introduce edge weights such that the weighted enumeration of the perfect matchings of $\ar^{n,m}_{\mathbf{b}}$ associated with a fixed monotone triangle $M$ is just $u^{\inv(M)} (1-u)^{\inv'(M)}$: in each cell, we assign the weights $1,1,u,1-u$ to the edges, where we start at the NW edge and go around the cell clockwise, see Figure~\ref{urbanrenewal} right.
As a side remark note that this shows that the special cases $(u,v)=(1,0)$ and 
$(u,v)=(0,1)$ in the generating function amounts to compute the number of perfect matchings of a \emph{hexagonal grid} (since $(u,v)=(1,0)$ corresponds to the deletion of 
the SW edge of each cell, while $(u,v)=(0,1)$ corresponds to the deletion of the 
SE edge of each cell), which is no surprise, because counting perfect matchings of hexagonal grids corresponds to \emph{lozenge tiling enumeration}, which in turn is related to the enumeration of \emph{semistandard tableaux} and thus of Gelfand-Tsetlin patterns.

\begin{figure}
\scalebox{0.4}{
\psfrag{a}{\Large $a$}
\psfrag{b}{\Large$b$}
\psfrag{c}{\Large$c$}
\psfrag{d}{\Large$d$}
\psfrag{A}{\Large $\frac{a}{ad+bc}$}
\psfrag{B}{\Large$\frac{b}{ad+bc}$}
\psfrag{C}{\Large$\frac{c}{ad+bc}$}
\psfrag{D}{\Large$\frac{d}{ad+bc}$}
\includegraphics{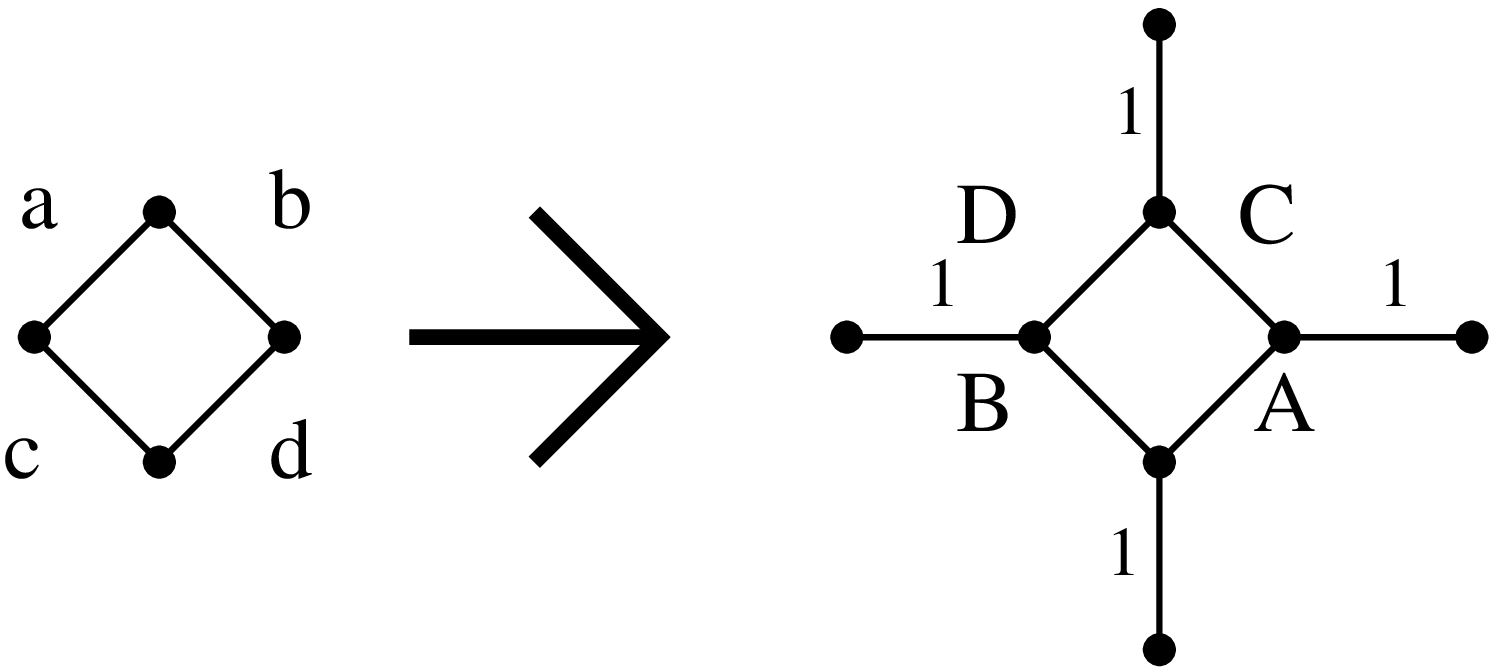}} \quad \quad \quad
\scalebox{0.4}{
\psfrag{1}{\Large $1$}
\psfrag{u}{\Large$u$}
\psfrag{1-u}{\Large$1-u$}
\includegraphics{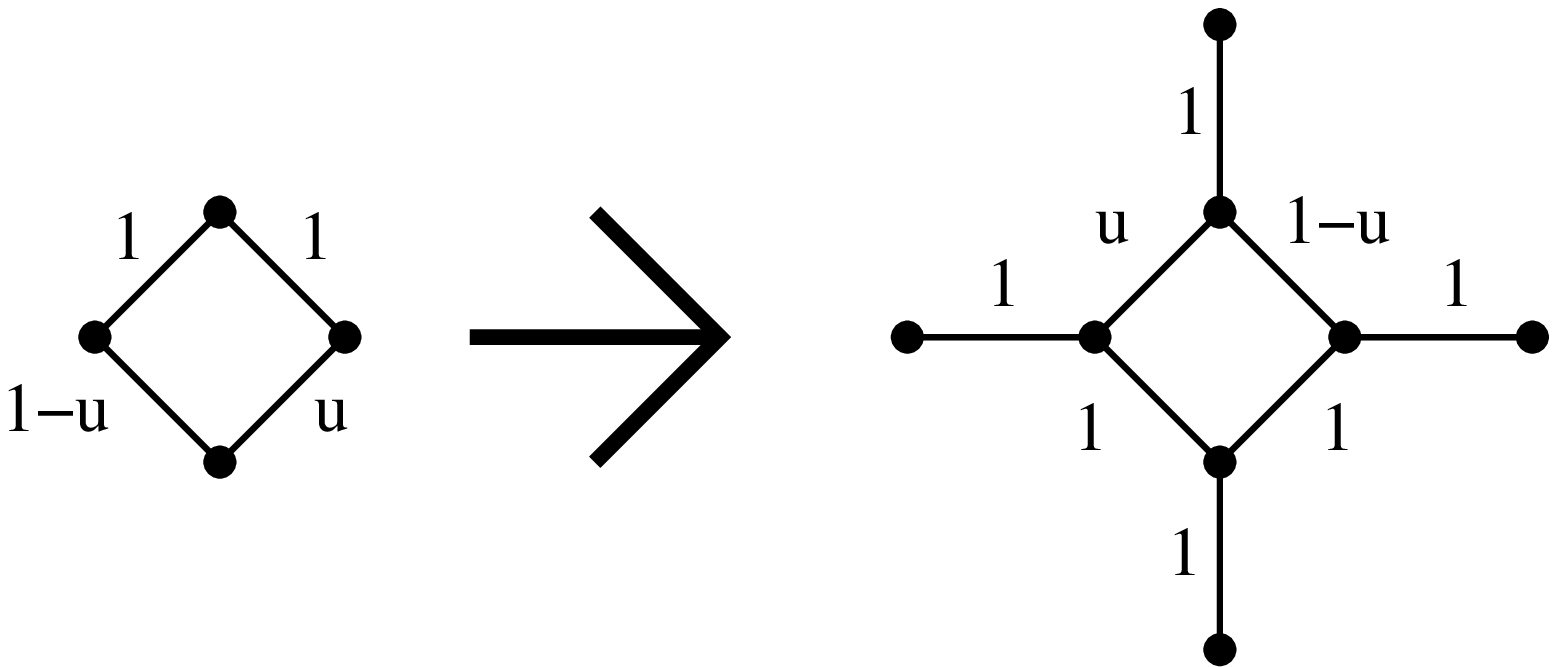}}\caption{\label{urbanrenewal} Urban renewal}\end{figure}

\subsubsection{Independence of $u$} We sketch an argument that shows that the weighted enumeration of 
our perfect matchings is independent of $u$, and thus the generating function of monotone triangles (after setting $v=1-u$): This is done with the help of the local graph operation \emph{urban renewal} which was introduced by Kuperberg and Propp, see 
Figure~\ref{urbanrenewal}. If we replace a cell with edge weights $a,b,c,d$ by the configuration indicated in Figure~\ref{urbanrenewal} (right), the generating function of perfect matchings of the original graph is obtained from the generating function of the modified graph by multiplication of $a d + b c$. We perform this operation to every cell of, say, $\ar^{4,7}_{1,3,4,7}$. 

In the graph that is obtained this way, there are two trivial simplifications that can be made at several places: If the two edges that are incident with a vertex of degree $2$ have weight $1$, they can be contracted without changing the generating function, and each edge with weight $1$ that is incident with a vertex of degree $1$ can be deleted along with all edges incident with the other vertex of the edge. This implies that the weighted enumeration of the perfect matchings of $\ar^{4,7}_{1,3,4,7}$ is equal to the sum of the weighted enumeration of the perfect matchings of the 
following graphs: $\ar^{3,6}_{2,3,6}$, $\ar^{3,6}_{1,3,6}$, $\ar^{3,6}_{2,3,5}$, 
$\ar^{3,6}_{1,3,5}$, $\ar^{3,6}_{2,3,4}$, $\ar^{3,6}_{1,3,4}$. (The sequences arise as follows: The complement of $\{1,3,4,7\}$ in $\{1,\ldots,7\}$ is $\{2,5,6\}$, and now we allow for each element $i$ in the complement that either the element itself or $i-1$ is an element of the new complements. If we take complements in $\{1,\ldots,6\}$, then we obtain our sequences.) We can assume by induction with respect to $n$ that the generating function of the perfect matchings of each of these graphs is independent of $u$.

\end{document}